\numberwithin{equation}{section} %equation numering by section
\theoremstyle{plain}
\newtheorem{theorem}{Theorem}[section]
\newtheorem{lemma}[theorem]{Lemma}
\theoremstyle{remark}
\newtheorem{definition}{Definition}
\newtheorem{proposition}[theorem]{Proposition}
\newtheorem{corollary}[theorem]{Corollary}
\newtheorem{example}{Example}
\newtheorem{remark}[theorem]{Remark}
\newtheorem{condition}{Condition}
\newcommand{\pr}{{\amsmathbb{P}}}
\newcommand\independent{\protect\mathpalette{\protect\independenT}{\perp}}
\def\independenT#1#2{\mathrel{\rlap{$#1#2$}\mkern2mu{#1#2}}}
\newcommand*\dd{\mathop{}\!\mathrm{d}}
\DeclareSymbolFontAlphabet{\amsmathbb}{AMSb}%
\begin{document}

\begin{frontmatter}
\title{Censored extreme value estimation}
%\title{A sample article title with some additional note\thanksref{t1}}
\runtitle{Censored extreme value estimation}
%\thankstext{T1}{A sample additional note to the title.}

\begin{aug}
%%%%%%%%%%%%%%%%%%%%%%%%%%%%%%%%%%%%%%%%%%%%%%%
%% Only one address is permitted per author. %%
%% Only division, organization and e-mail is %%
%% included in the address.                  %%
%% Additional information can be included in %%
%% the Acknowledgments section if necessary. %%
%% ORCID can be inserted by command:         %%
%% \orcid{0000-0000-0000-0000}               %%
%%%%%%%%%%%%%%%%%%%%%%%%%%%%%%%%%%%%%%%%%%%%%%%
\author[A]{\fnms{Martin}~\snm{Bladt}\ead[label=e1]{martinbladt@math.ku.dk}}
\and
\author[B]{\fnms{Igor}~\snm{Rodionov}\ead[label=e2]{igor.rodionov@essex.ac.uk}}
%%%%%%%%%%%%%%%%%%%%%%%%%%%%%%%%%%%%%%%%%%%%%%
%% Addresses                                %%
%%%%%%%%%%%%%%%%%%%%%%%%%%%%%%%%%%%%%%%%%%%%%%
\address[A]{Department of Mathematical Sciences, University of Copenhagen \\
Universitetsparken 5, DK-2100 Copenhagen, Denmark \\[2mm] \printead[presep={}]{e1}}
\address[B]{School of Mathematics, Statistics and Actuarial Science, University of Essex \\ Wivenhoe Park, Colchester CO4 3SQ, United Kingdom \\[2mm]
Institute of Mathematics, \'Ecole Polytechnique F\'ed\'erale de Lausanne \\
Route Cantonale, CH-1015 Lausanne, Switzerland \\[2mm] \printead[presep={}]{e2}}

\end{aug}

\begin{abstract}
A novel and comprehensive methodology designed to tackle the challenges posed by extreme values in the context of random censorship is introduced. The main focus is on the analysis of integrals based on the product-limit estimator of normalized upper order statistics, called extreme Kaplan--Meier integrals. These integrals allow for the transparent derivation of various important asymptotic distributional properties, offering an alternative approach to conventional plug-in estimation methods. Notably, this methodology demonstrates robustness and wide applicability among various tail regimes. A noteworthy by-product is the extension of generalized Hill-type estimators of extremes to encompass arbitrary tail behavior, which is of independent interest. The theoretical framework is applied to construct novel estimators for real-valued extreme value indices for right-censored data. Simulation studies confirm the asymptotic results and, in a competitor case, mostly show superiority in mean square error. An application to brain cancer data demonstrates that censoring effects are properly accounted for, even when focusing solely on tail classification.
\end{abstract}

\begin{keyword}[class=MSC]
\kwd[Primary ]{62G32}
\kwd[; secondary ]{62N02}
\kwd[; secondary ]{62E20}
\end{keyword}

\begin{keyword}
\kwd{Censored extremes}
\kwd{Kaplan--Meier estimator}
\kwd{Random censoring}
\kwd{Extreme value index}
\kwd{Residual estimators}
\kwd{Regular variation}
\kwd{Tail inference}
\end{keyword}

\end{frontmatter}
%%%%%%%%%%%%%%%%%%%%%%%%%%%%%%%%%%%%%%%%%%%%%%
%% Please use \tableofcontents for articles %%
%% with 50 pages and more                   %%
%%%%%%%%%%%%%%%%%%%%%%%%%%%%%%%%%%%%%%%%%%%%%%
%\tableofcontents

\section{Introduction}
Analyzing extreme events that experience right censoring is a crucial undertaking in the field of statistics, and failing to account for these censoring effects can lead to significant bias. Roughly speaking, by treating extreme values that remain concealed due to censoring as fully-observed data points, we may underestimate their true effects. Such circumstances commonly arise when investigating weather phenomena, financial markets or re-insurance claim sizes. Likewise, in biomedical applications, the proper classification of the behavior of the largest observations is a delicate task due to incomplete data.
Hence, exploring censored extremes is a crucial step in mitigating associated risks. Given the escalating worldwide prevalence of catastrophic occurrences, the study of censored extremes plays an important role in constructing enhanced statistical models for these risks. In turn, these correctly-calibrated models facilitate informed decision making.

The foundational statistical work in fully-observed extreme value theory is by now classic and well-understood in the univariate case; see~\cite{beirlant2006statistics, embrechts2013modelling, dehaan}, and to a large degree for univariate time series, as can be found in~\cite{kulik2020heavy}. 
Similarly, the pillars of survival analysis have been laid out since the seventies, in different flavors, with the counting process point of view being one of the the most popular approaches, and with martingale techniques playing an ubiquitous role; see  \cite{fleming2013counting,AndersenBorganGillKeiding1993}. However, a unified methodology for estimating various extreme values under censoring is still lacking. Our paper aims to address this gap.

More precisely, this paper proposes the analysis of randomly right-censored extremes through Kaplan--Meier integrals of normalized upper order statistics (see~\cite{stutewang} for the analogous non-extreme case). This generalizes integral functions with respect to the empirical distribution function of normalized upper order statistics, which have received interest in the literature. For instance, such integrals can often be cast as sums of extreme values from iid samples, which were studied in~\cite{csorgo1986rv} for regularly varying tails and in~\cite{csorgo1991asymptotic}  for arbitrary tail behavior. Upon replacement of the empirical distribution function with the Kaplan--Meier estimator of upper order statistics of a censored sample, the analysis of integral functions is no longer transparent, neither with theory of empirical processes nor using the sums approach. This paper is devoted to studying precisely these integral functions by using the elegant U-statistics approach of \cite{stute}, together with machinery from extreme value theory.

In order to generalize the approach of \cite{stute} to the extreme case, the theory of U-statistics \cite{serfling} and of residual estimation (based on averages of functions of normalized upper order statistics) of \cite{segers} are important. However, they are not directly applicable, the former having to be adapted to accommodate upper order statistics (see~\cite{oorschot2023tail} for a related concept), and the latter requiring extension to arbitrary tail regimes. These adaptations are provided in this paper, and yield a unifying and transparent treatment of extreme estimators and their convergence, imposing natural conditions. Estimators of the parameter governing the tail behavior (the so-called extreme value index) derived from extreme Kaplan--Meier integrals have similar form and in some cases equal asymptotic variance as the ones treated in~\cite{bww}, though the construction principle is rather different and our theory allows for the direct construction of estimators of further extreme value characteristics.

The central result of the paper is Theorem~\ref{T4}, providing a general decomposition of extreme Kaplan--Meier integrals in terms of sums of conditionally i.i.d. random variables. The latter result and its extensions, in combination with other conditions, yield useful limit theorems and applications. The remaining { theoretical}  contributions of
the paper can be summarized as follows:

\begin{enumerate}
\item Suitable conditions are established under which extreme Kaplan--Meier integrals are weakly consistent, Theorem~\ref{Tconsistency}, and asymptotically normal %within the Fr\'echet max-domain of attraction, that is 
for distributions with regularly varying tails. Asymptotic normality is provided in two steps, with only the last one, comprised in Theorem~\ref{T3an} and Corollary~\ref{R2}, requiring a second-order condition. The main unavoidable restriction is a second moment condition for the limit variance to exist.  {As an illustration, the asymptotic normality of a Hill-type estimator initially proposed by \cite{worms2014new} for the positive extreme value index is established for the first time in Corollary~\ref{CorHill}.} Another notable application is Corollary \ref{cor:ekme} where the conditions of consistency and asymptotic normality of the extreme Kaplan--Meier estimator are provided.

\item Residual estimation theory is generalized to arbitrary tail regimes and in particular it is found that the convergence rates vary across regimes. The results are presented for fully-observed data and are of independent interest. Here, the main result is Theorem~\ref{T2gen}. The exposition is in terms of so-called candidate functions, which themselves must satisfy a second-order type condition.  {This extension is key, since residual estimators are the uncensored version of extreme Kaplan--Meier integrals.}
\item The results of Section~\ref{sec:ekmi} are extended to all max-domains of attraction in terms of candidate functions, {  using generalized residual estimators, now allowing for censored data}. Here, certain regularly varying functions with tail index that can be identified as the asymptotic non-censoring proportion play a central role; see Condition~\ref{C6}. We highlight Theorem~\ref{T1genekmi} and Theorem~\ref{T2genekmi} as our main extensions.
\item Finally, we provide a larger application, by establishing full asymptotics for a novel censored version of the moment estimator of \cite{moment}, using all the main components of our theory. We provide in Theorem~\ref{Tmomentconsistency} consistency and in Theorem~\ref{Tmomentan} asymptotic normality.
%Some assumptions from the general theory are relaxable for this application and thus some more technical proofs are provided for precisely this purpose.
\end{enumerate}

The cross-fertilization of extreme value theory and survival analysis has been explored before in various contexts. We provide an account of some of the most important contributions. In~\cite{beirlant2001pareto} the Pareto index estimation was considered for the first time under random right-censorship framework (see also \cite{beirlant2007estimation,einmahl2008statistics} covering arbitrary tail regimes), which has since produced some generalizations, e.g. using covariates in~\cite{ndao2014nonparametric}, truncation in~\cite{gardes2015estimating}, trimming in~\cite{bladt_trim}, and bias-reduction in~\cite{BEIRLANT2018114,goegebeur2019bias}. Another approach, which neatly formulates the censoring problem with extremes as a path-wise problem, was proposed in~\cite{worms2014new} through constructions based on expected values{, while \cite{worms2018extreme} developed a competing risks model using techniques from \cite{stutewang}}. Subsequently, a partial result on the asymptotic distribution was recently provided in~\cite{bww}. %However, their method of proof is not generalizable to our setting. 
In terms of missing extreme data, the main reference is \cite{xu2022handling}, and finally the machinery of extreme value theory has recently been imported to survival analysis in a different context, namely for cure-rate modelling in the case of insufficient follow-up; see~\cite{escobar2019non,escobar2022estimation,escobar2023nonparametric}.

{The focus of the existing literature is on providing methods for estimating the extreme value index in censored samples, or estimators derived thereof. In contrast, our results facilitate further directions (apart from the presented Hill-type and moment estimators) such as: the estimation of the scale and second-order parameters; the construction of estimates for distribution tails; estimation of extreme quantiles and other extreme characteristics for censored samples. Moreover, popular methods in extreme value theory such as maximum likelihood estimation and probability weighted moments can be extended to censored extremes using our methodology.\\}

\textit{Organisation of the paper}. In Section~\ref{sec:prelims} we introduce notation and results from extreme value theory and survival analysis relevant to our setup. Section~\ref{sec:ekmi} is devoted to the study of extreme Kaplan--Meier integrals when both event and censoring variables have distributions with regularly varying tails.
{ Residual estimators, the uncensored versions of extreme Kaplan--Meier integrals, are extended in Section~\ref{genresidual} to arbitrary tail regimes, and their applications are found in the Supplementary Material. Extensions of the results of Section~\ref{sec:ekmi} are finally provided by generalizing Section~\ref{genresidual} for censored data; this is done in Section~\ref{sec:exten}. Finally, Section~\ref{sec:evi_est} provides a prime application of our theoretical results, proposing and analysing in detail an extreme value index estimator for censored data and for all max-domains of attraction. Three simulation studies are provided in Section~\ref{sec:sim}, and an application to a brain cancer dataset is given in Section~\ref{sec:real_data}. All proofs may be found in the Supplementary Material.} 

\section{Preliminaries and setting}\label{sec:prelims}
This section provides standard preliminaries from survival analysis and extreme value theory, and may be used as a reference for notation in the remainder of the paper.

\subsection{Random censoring}
%Let $X_1, \ldots, X_n$ be i.i.d. random variables with common cdf $F.$ Let $\varphi: \amsmathbb{R} \to\amsmathbb{R}$ be some measurable function which is $F$-square integrable, that is $\int \varphi^2 \dd F<\infty$. Then, an application of the central limit theorem (CLT) for $\{\varphi(X_i)\}_{i=1}^n$ can be written as 
%\begin{equation}\label{cltfirst}\sqrt{n} \int \varphi\, \dd (F_n - F) \stackrel{d}{\to} N(0, \sigma_\varphi^2),\end{equation} where $F_n$ is the empirical cdf of $\{X_i\}_{i=1}^n$ and 
%$\sigma_\varphi^2 = \int \varphi^2 \dd F - \left(\int \varphi \,\dd F\right)^2.$ 

We assume the random right-censoring setting, that is, we observe the following datapoints
\[Z_i = \min(X_i, Y_i) \:\:\:\text{ and }\:\:\: \delta_i = I(X_i\le Y_i),\] where $\{X_i\}_{i=1}^n$ and $\{Y_i\}_{i=1}^n$ are independent i.i.d. sequences cdf $F$ and $G$, respectively. Notice that $\delta_i$ indicates whether the variable of interest $X_i$ has been observed or not. Let $H$ denote the cdf of $Z_1$, which by independence of the variable $X_1$ and the censoring mechanism $Y_1,$ is given by
\begin{equation}1 - H = (1-F)(1-G).\label{hfg}\end{equation}

A consistent estimator of $F$, \cite{wang1987note}, is now provided by the Kaplan--Meier product-limit estimator $\amsmathbb{F}_n$ defined by
\begin{equation}\label{kaplanmeyer}1 - \amsmathbb{F}_n(x) = \prod_{i=1}^n \left[1 - \frac{\delta_{[i:n]}}{n-i+1}\right]^{I(Z_{i,n}\le x)}.\end{equation}
Here $Z_{1,n} \le \ldots \le Z_{n,n}$ are the order statistics of $\{Z_i\}$ and $\{\delta_{[i:n]}\}_{i=1}^n$ are the concomitants of $\{Z_{i,n}\}_{i=1}^n,$ that is $\delta_{[i:n]} = \delta_j$ if $Z_{i,n} = Z_j.$

Let $\varphi: \amsmathbb{R} \to\amsmathbb{R}$ be some measurable function. Write $F\{a\} = F(a) - F(a-)$, such that $F\{a\}>0$ iff $a$ is an atom of $F$.  Let $A$ be the set of all atoms of $H,$ possibly empty. Under $\int|\varphi| \dd F <\infty,$ Stute and Wang \cite{stutewang} showed that 
\[\int \varphi\, \dd \amsmathbb{F}_n \stackrel{a.s.}{\to} \int \varphi\, \dd\widetilde F, \quad n\to\infty,\] where $\tau_H = \inf\{x: H(x)=1\}$ and
\begin{equation}\label{tildef}\widetilde F(x) = \left\{\begin{array}{cc} F(x), & \text{ if } x<\tau_H; \\ F(\tau_H -) + I(\tau_H\in A) F\{\tau_H\}, & \text{ otherwise.}\end{array}\right.\end{equation}
Theorem 1.1 in~\cite{stute} then states that under some mild regularity conditions imposed on $\varphi,$ $F$ and $G$, the Kaplan--Meier integral $\int \varphi\,\dd \amsmathbb{F}_n$ can be represented as a sum of i.i.d. random variables and a summand $R_n$ vanishing as $o_{P}(n^{-1/2})$. 

Introduce the subdistribution functions, for $z\in \amsmathbb{R}$,
\begin{align*} 
 H^j(z) &= \pr(Z\le z, \delta = j),\quad j=0,1,
\end{align*} where $(Z, \delta) \stackrel{d}{=} (Z_1, \delta_1).$ The following three functions are necessary to state a key asymptotic decomposition of $\int \varphi\, \dd \amsmathbb{F}_n$, and are provided here for future reference,
\begin{equation}\label{gamma0} \gamma_0(x) = \exp\left\{\int_{z<x}\frac{H^0(\dd z)}{1 - H(z)}\right\},\end{equation}
\begin{equation}\label{gamma1}\gamma_1(x) = \frac{1}{1 - H(x)}\int_{x< z}\varphi(z) \gamma_0(z) H^1(\dd z),\end{equation} and
\begin{equation}\label{gamma2} \gamma_2(x) = \int\int \frac{I(v<x, v<z) \varphi(z) \gamma_0(z)}{(1 - H(v))^2}  H^0(\dd v) H^1(\dd z).\end{equation}
These functions can be simplified if $F$ and $G$ are continuous, for instance
\begin{equation}\gamma_0(x) = \frac{1}{1-G(x)}, \quad x<\tau_H.\label{gamma_ex}\end{equation} 

The decomposition in turn implies the central limit theorem under random censorship (see Corollary 1.2, \cite{stute}),
\begin{equation}\label{cor1.2stute}\sqrt{n} \int \varphi\, \dd(\amsmathbb{F}_n - \widetilde F) \stackrel{d}{\to} N(0, \sigma_\varphi^2),\end{equation}
where now $\sigma_\varphi^2 = {\rm var}\{\varphi(Z)\gamma_0(Z)\delta + \gamma_1(Z)(1-\delta) - \gamma_2(Z)\}.$

\subsection{Extreme value theory} 

When analyzing extremes, a popular approach is to consider a categorization in terms of the possible non-degenerate limits of the excess distribution. It turns out that there are only three such limits, which can be written in terms of the Generalised Pareto Distribution (GPD), with cumulative distribution function given by
\begin{align}\label{eq:GPD_def}
G_\gamma(y) = 
\begin{cases} 
1 - \left(1 + \gamma \frac{y - \mu}{\sigma}\right)^{-1/\gamma}, & \text{if } \gamma \neq 0, \\
1 - \exp\left(-\frac{y - \mu}{\sigma}\right), & \text{if } \gamma = 0,
\end{cases}
\end{align}
for $y \geq \mu$ when $\gamma \geq 0$, and $\mu \leq y \leq \mu - \sigma/\gamma$ when $\gamma < 0$, where $\mu \in \amsmathbb{R}$ is the location parameter, $\sigma > 0$ is the scale parameter, and $\gamma \in \amsmathbb{R}$ is the shape parameter called the extreme value index.

The following celebrated result then links the excesses over a large threshold with the GPD distributions.
\begin{theorem}[Pickands--Balkema--de Haan Theorem]
Let $X$ be a random variable with distribution function $F$. Then the only possible non-degenerate limit of $\amsmathbb{P}(X - u \leq a(u) x +b(u) \mid X > u)$ with measurable functions $a(u)>0$ and $b(u)$ is $G_\gamma$.
\end{theorem}
The parameter $\gamma$ not only governs the tail behavior, but also categorizes distribution into three distinct so-called max-domains of attraction (MDA). Thus, we say that $F$ belongs to the Fr\'echet, Gumbel, or Weibull MDA if $\gamma > 0$, $\gamma = 0$, or $\gamma < 0$, respectively. We then write $F\in\mathcal{D}(G_\gamma)$.

Similarly, for any cdf $F$ define %its survival function as $\bar F,$ that is $\bar F = 1-F,$ and 
its \textit{tail counterpart} as
\begin{equation}F^t(x) = \frac{F(xt) - F(t)}{1 - F(t)}, \quad x\ge 1,\end{equation} which is itself a cdf and of practical utility in the sequel. Further, let $\gamma^t_0,$ $\gamma^t_1$ and $\gamma^t_2$ be defined similarly to \eqref{gamma0}, \eqref{gamma1} and \eqref{gamma2}, by replacing $H,  H^0$ and $ H^1$ with $H^t,$ $ H^{0,t}$ and $ H^{1,t},$ respectively, where
\begin{equation} H^{0,t}(x) = \int_1^{x} (1 - F^t(z)) G^t(\dd z)\:\:\: \text{ and }\:\:\:  H^{1,t}(x) = \int_1^{x} (1 - G^t(z-)) F^t(\dd z).\label{h01t}\end{equation}

%Finally, for future reference, define
%\[C^t(x) = \int_{1}^x \frac{G^t(\dd y)}{(1 - H^t(y))(1 - G^t(y))}.\]

%The sequence of integers $k_n$ is an intermediate sequence if
%\begin{equation*}k_n/n\to 0 \quad \text{and}\quad k_n\to\infty \quad \text{as} \quad n\to\infty.\end{equation*} 

\section{Extreme Kaplan--Meier Integrals}\label{sec:ekmi}
This section studies the consistency and asymptotic normality of extreme Kaplan--Meier integrals for regularly varying tails, with the main results being the decomposition of Theorem~\ref{T4}, and the asymptotic normality of Corollary~\ref{R2}, though the intermediate results are of independent interest.

We consider the case where the variables $X_i$ and the censoring mechanism $Y_i$ are both heavy-tailed, i.e. belonging to the Fr\'echet max-domain of attraction. In later sections this restriction is relaxed to also allow them to belong to the Gumbel and Weibull max-domains of attraction. More specifically, we assume now that $F,$ $G$, and hence $H$, are regularly varying cdfs with respective tail indices $1/\gamma_F,$ $1/\gamma_G$ and $1/\gamma_H,$ where by \eqref{hfg} we obtain \[1/\gamma_H = 1/\gamma_F + 1/\gamma_G.\] In this case, by regular variation of $F,$
\begin{equation}1 - F^t(x) \to x^{-1/\gamma_F}\quad \text { as } \quad t\to\infty, \label{tailfunction}\end{equation} and similarly for $G$ and $H$. %Note that the latter convergence is uniform in $x>1$, which is a standard result provided by the uniform convergence theorem in~\cite{bingham}.

%{ Similarly to Lemma 3.4.1 in~\cite{dehaan},} it follows that the normalized upper order statistics $\{Z_{n-k+i,n}/Z_{n-k,n}\}_{i=1}^{k}$ given $Z_{n-k,n} = t$ for $k<n$ are distributed as a vector of order statistics $\{V^t_{i,k}\}_{i=1}^k$ of a sample $\{V^t_{i}\}_{i=1}^k$ having cdf $H^t$ that converges to the Pareto distribution with index $1/\gamma_H$ as $t\to\infty$. However, to study the distributional convergence to the Pareto law with the correct target index $1/\gamma_F$ it is convenient to consider a product-limit construction, as is given in the next section.

Let $Z_{1,n} \le \ldots \le Z_{n,n}$ be the order statistics of a randomly right censored sample $\{(Z_i,\delta_i)\}$ and let $\{\delta_{[i:n]}\}_{i=1}^n$ be the concomitants of $\{Z_{i,n}\}_{i=1}^n.$ We are interested in analyzing the extreme value behavior of the datapoints $X_1,\dots,X_n$, which is possible through the study of $Z_{i,n}$ and $\delta_{[i:n]}$.
The following estimator targets the tail of the distribution while accounting for censoring effects.
\begin{definition}\rm
The \textit{extreme Kaplan--Meier estimator} (EKM) is given by\footnote{{We assume $0^0 = 1$ by convention.}}
\begin{equation}\label{kaplanmeierextreme} \amsmathbb{F}_{k,n}(x) = 1 -\prod_{i=1}^k \left[1 - \frac{\delta_{[n-i+1:n]}}{i}\right]^{I(Z_{n-i+1,n}/Z_{n-k,n}\le x)}.\end{equation}
\end{definition}

Let $F, G,$ and therefore $H$ be eventually continuous (i.e. continuous to the right of a large enough value). The asymptotic behavior of extreme Kaplan--Meier integrals is studied below, defined as integrals with respect to the EKM estimator:
\begin{equation}S_{k,n}(\varphi) = \int \varphi\, \dd \amsmathbb{F}_{k,n}.\label{extremeKM}\end{equation}
{ It is easily seen from \eqref{kaplanmeierextreme} that 
\begin{equation}\label{explicit}S_{k,n}(\varphi) = \sum_{i=1}^k \omega_{ik}\; \varphi\left(\frac{Z_{n-i+1,n}}{Z_{n-k,n}}\right),\end{equation}
where for $1\le i\le k,$
\begin{equation}\label{nik}\omega_{ik} = \frac{\delta_{[n-i+1:n]}}{i} \prod_{j=i+1}^k \left[\frac{j-1}{j}\right]^{\delta_{[n-j+1:n]}}.\end{equation} 
}
We assume that $\varphi$ is { a continuous and eventually monotone function on $[1,\infty).$}%\footnote{{ It suffices to require almost everywhere continuity with respect to Lebesgue measure.}}.}

%\begin{definition}
%    A continuous and eventually monotone function $\bar\varphi:[1,\infty)\to\amsmathbb{R}_+$ such that $|\varphi| \le \bar \varphi$, is called an envelope of $\varphi.$
%\end{definition}
%\noindent In the sequel, whenever we impose conditions on an envelope, we tacitly assume that the envelope exists. Otherwise, the condition is violated.  The use of envelopes is technical in nature, to avoid requiring %continuity and
%eventual monotonicity directly on $\varphi$  {(and later of $\varphi^\prime$).} For a well-behaved $\varphi$ function, the corresponding envelope can usually be chosen as $\bar\varphi=|\varphi|$.\\

%Important issues when studying EKM integrals arise from the fact that the ``sample'' is no longer independent. { Hence to prove our results, as a preliminary step we consider the behavior of the EKM integral \eqref{extremeKM} conditioning on $Z_{n-k,n} = t.$} 

We now provide the extreme value analogue to Theorem 1.1 of \cite{stute}, where the decomposition is now in terms of sums of functions of upper order statistics. Setting $\varphi=\log$ in \eqref{extremeKM}, we get a version of the celebrated Hill estimator \cite{hill1975simple} of positive $\gamma_F$ in presence of censoring, see  {Corollary~\ref{CorHill} for details} and also \cite{einmahl2008statistics} and \cite{worms2014new} for related estimators.

We impose the following key condition, which is natural for the limit variance to exist.

\begin{condition}\label{envelope_moment}
{ For some $\varepsilon>0$
\begin{equation}\label{onlycondition} \int_{1}^\infty (\varphi(x))^2\,x^{\alpha(\varepsilon)} \dd x < \infty,\end{equation}
where $\alpha(\varepsilon) = 1/\gamma_G - 1/\gamma_F -1 +\varepsilon.$ }
\end{condition}
{ We call the sequence of integers $k_n$ intermediate if
\begin{equation*}k_n/n\to 0 \quad \text{and}\quad k_n\to\infty \quad \text{as} \quad n\to\infty.\end{equation*}}
\begin{theorem}[Key decomposition]\label{T4} 
Let $F$ and $G$ be regularly varying and eventually continuous cdfs, and $\varphi$ be continuous and eventually monotone. Assume Condition~\ref{envelope_moment}, and let $k=k_n$ be an intermediate sequence. Then the following random sequence
\begin{eqnarray}\nonumber
r_{k,n} &=& S_{k,n}(\varphi)\\ \label{T4new} &&-\Big\{\frac{1}{k} \sum_{i=1}^k \varphi(Z_{i,k}^\ast)\gamma_0^t(Z_{i,k}^\ast) \delta_{[n-k+i:k]}\!+\!\frac{1}{k}  \sum_{i=1}^k \gamma_1^t(Z_{i,k}^\ast) (1\!-\!\delta_{[n-k+i:k]})\!-\!\frac{1}{k}  \sum_{i=1}^k \gamma_2^t(Z_{i,k}^\ast)\Big\} 
\end{eqnarray} is such that for every $\epsilon>0$
\begin{align*}
\lim_{n\to\infty} 
\limsup\limits_{t\rightarrow\infty}\pr\big(|k^{1/2} r_{k,n}|>\epsilon|\,Z_{n-k,n}=t\big) = 0.
\end{align*}
{ Here, $(Z^\ast_{1,k},\dots,Z^\ast_{k,k})=( Z_{n-k+1,n}/Z_{n-k,n},\ldots , Z_{n,n}/Z_{n-k,n})$.}
%where $\{\delta^t_i\}_{i=1}^k$ are such that their concomitants $\{\delta^t_{[i:k]}\}_{i=1}^k$ equal $\{\delta_{[n-k+i:n]}\}_{i=1}^k$ a.s.
\end{theorem}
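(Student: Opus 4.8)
The plan is to transfer the classical Stute decomposition (Theorem 1.1 of \cite{stute}) to the conditional setting, where conditioning on $Z_{n-k,n}=t$ reduces the upper $k$ order statistics to an i.i.d. sample of size $k$ from the tail-counterpart law $H^t$. First I would invoke the representation already stated in the excerpt: given $Z_{n-k,n}=t$, the normalized statistics $(Z^\ast_{1,k},\dots,Z^\ast_{k,k})$ are distributed as the order statistics $\{V^t_{i,k}\}$ of an i.i.d. sample $\{V^t_i\}_{i=1}^k$ with cdf $H^t$, and correspondingly the concomitant indicators $\delta_{[n-k+i:k]}$ are the censoring indicators attached to that sample, so that $(V^t_i,\delta^t_i)$ has subdistribution functions $H^{0,t},H^{1,t}$ as in \eqref{h01t}. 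Under this identification, $S_{k,n}(\varphi)$ conditioned on $Z_{n-k,n}=t$ is exactly a Kaplan--Meier integral $\int\varphi\,\dd\amsmathbb{F}^{(t)}_k$ built from the $k$-sample $(V^t_i,\delta^t_i)$. Theorem 1.1 of \cite{stute}, applied to this finite sample with its own cdf $F^t$ and $G^t$, then yields precisely the decomposition in \eqref{T4new}, with remainder $r^{(t)}_{k,n}$ satisfying $k^{1/2} r^{(t)}_{k,n}=o_P(1)$ as $k\to\infty$ \emph{for each fixed $t$}. This already gives $\limsup_{t\to\infty}$ of a quantity that is, for each $t$, going to $0$ in $n$ — but the order of limits in the statement is the reverse, so the real content is uniformity in $t$.

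The crux, therefore, is controlling the Stute remainder \emph{uniformly in $t$} as $t\to\infty$. I would go back into the proof of Theorem 1.1 of \cite{stute} and track how its $R_n$ term is bounded: it is built from products and differences of the empirical versions of $H,H^0,H^1$ against their population counterparts, integrated against $\varphi\gamma_0$, and the bounds are in terms of moments of $\varphi(Z)\gamma_0(Z)$, $\gamma_1(Z)$, $\gamma_2(Z)$ under $H$ (more precisely, the quantities appearing in $\sigma_\varphi^2$ of \eqref{cor1.2stute}). The key observation is that all these moment quantities, computed under $H^t$ with functions $\gamma^t_0,\gamma^t_1,\gamma^t_2$, converge as $t\to\infty$ to their limiting-Pareto analogues, and — this is where Condition \ref{envelope_moment} enters — they are \emph{uniformly bounded} in $t$ for $t$ large. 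Indeed, $\gamma^t_0(x)\to x^{1/\gamma_F-1/\gamma_H}=x^{1/\gamma_G}$ (consistent with \eqref{gamma_ex} and \eqref{tailfunction}), and the relevant second moment is of order $\int_1^\infty \bar\varphi(x)^2 x^{2/\gamma_G}\,\dd H^t(x)$; since $H^t$ has a density behaving like $(1/\gamma_H)x^{-1/\gamma_H-1}$, Potter-type bounds (uniform convergence theorem, \cite{bingham}) control the integrand by $\bar\varphi(x)^2 x^{2/\gamma_G-1/\gamma_H-1+\varepsilon}=\bar\varphi(x)^2 x^{\alpha(\varepsilon)}$ up to constants, uniformly in large $t$, which is finite by \eqref{onlycondition}. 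The same Potter-bound/dominated-convergence argument handles the $\gamma^t_1$ and $\gamma^t_2$ contributions and the cross terms. Feeding these uniform moment bounds into the (finite-sample, hence already uniform-in-$n$) structure of Stute's remainder estimate gives $\sup_{t\ge t_0}\pr(|k^{1/2}r_{k,n}|>\epsilon\mid Z_{n-k,n}=t)\to 0$ as $n\to\infty$, which is stronger than and implies the claimed iterated-limit statement.

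Two technical points I would be careful about. First, Stute's Theorem 1.1 requires mild regularity (his conditions on $\varphi,F,G$, and an integrability condition of the form $\int|\varphi|^{?}$-type plus conditions ensuring $\gamma_1,\gamma_2$ are well-defined); I would verify these hold for $\varphi$ continuous, $F^t,G^t$ eventually continuous, and the envelope condition — the eventual continuity of $F,G$ assumed at the start of Section \ref{sec:ekmi} is exactly what makes $\widetilde{F^t}=F^t$ on the relevant range so that no atom corrections intrude. Second, the passage "for each fixed $t$ the remainder is $o_P(k^{-1/2})$" uses that $k=k_n\to\infty$, i.e. the intermediate-sequence hypothesis, but $k_n/n\to 0$ is \emph{not} needed conditionally on $Z_{n-k,n}=t$ — it only becomes relevant when one later removes the conditioning and integrates over the distribution of $Z_{n-k,n}$; here it is harmless to keep it in the hypotheses. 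The main obstacle is genuinely the uniformity in $t$: one cannot simply quote Stute as a black box but must re-examine his remainder bound and combine it with uniform (Potter-type) control of the $\gamma^t_j$-moments afforded by Condition \ref{envelope_moment}. Everything else — the distributional identification of the conditioned sample, and the convergence of $\gamma^t_j$ to the Pareto-limit versions — is routine given the uniform convergence theorem for regularly varying functions.
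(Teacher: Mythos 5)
Your plan is correct and matches the paper's actual route: condition on $Z_{n-k,n}=t$ to reduce to an i.i.d.\ conditional sample, redo Stute's decomposition on this sample, and obtain uniformity in $t$ via Potter-type bounds enabled by Condition~\ref{envelope_moment}. The paper's proof fills in details your sketch leaves implicit — a separate exchangeability argument (Lemma~\ref{L1}) to establish that the \emph{pairs} $(V^t_i,\delta^t_i)$, not just the $V^t_i$'s, are jointly i.i.d.; a two-stage truncate-then-lift structure (Condition~\ref{cond:bounded_phi} followed by Lemma~\ref{P1} and the $\varphi=\tilde\varphi+\varphi_1$ approximation); and the observation that after the change of variables $U_i=H^t(V^t_i)$ the empirical-process pieces of Stute's remainder become $t$-free, so that uniform moment control via Potter's bounds is indeed all that is needed — but you correctly anticipate that Stute cannot be invoked as a black box, so the plan is sound.
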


In simple words, Theorem~\ref{T4} states that $S_{k,n}(\varphi)$ given $Z_{n-k,n} = t$ can be represented as a sum of i.i.d. random variables plus a remainder {term, which is negligible of order $k^{-1/2}$ for large $n$, uniformly in large $t$ (confer with the proof for further details)}. The above representation, without additional assumptions, allows us to prove that the extreme Kaplan--Meier integral $S_{k,n}(\varphi)$ \eqref{extremeKM} is a consistent estimator for \[S_\circ(\varphi) := \int_1^\infty \varphi \dd F^\circ,\] where $F^\circ(x) = (1 - x^{-1/\gamma_F})I(x\ge1).$
\begin{theorem}[Weak consistency of EKM integrals]\label{Tconsistency} Assume the conditions of Theorem~\ref{T4}. Then
\[S_{k,n}(\varphi) \stackrel{P}{\to} S_\circ(\varphi).\]
\end{theorem}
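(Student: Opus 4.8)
The plan is to deduce consistency from the decomposition in Theorem~\ref{T4} by showing that each of the three averaging terms converges in probability to its natural Pareto-type limit, and then to remove the conditioning on $Z_{n-k,n}=t$ by a standard argument using the fact that $Z_{n-k,n}\to\tau_H=\infty$ in probability for an intermediate sequence. First I would fix the conditioning: given $Z_{n-k,n}=t$, the vector $(Z^\ast_{1,k},\dots,Z^\ast_{k,k})$ is distributed as the order statistics of an i.i.d.\ sample of size $k$ from $H^t$, with the concomitants $\delta_{[n-k+i:k]}$ attached, and by regular variation $H^t$ converges weakly to the Pareto law with index $1/\gamma_H$ while the conditional non-censoring probability along the upper tail converges to $\gamma_H/\gamma_F$ (this is the content of Condition~\ref{C6}-type behaviour, visible already from \eqref{h01t} since $1-G^t(z-)\to z^{-1/\gamma_G}$). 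Likewise $\gamma^t_0,\gamma^t_1,\gamma^t_2$ converge pointwise to their limiting counterparts $\gamma^\circ_0,\gamma^\circ_1,\gamma^\circ_2$ computed from the limiting Pareto subdistributions; in particular, using \eqref{gamma_ex}, $\gamma^t_0(x)\to x^{1/\gamma_G}$.

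Next I would apply a (conditional) weak law of large numbers to each sum in \eqref{T4new}. For the first term, $\frac1k\sum_{i=1}^k \varphi(Z^\ast_{i,k})\gamma^t_0(Z^\ast_{i,k})\delta_{[n-k+i:k]}$ is an average of $k$ i.i.d.\ (given $t$) terms distributed as $\varphi(V^t)\gamma^t_0(V^t)\mathbf 1\{V^t\le W^t\}$ with $(V^t,W^t)$ having the appropriate conditional joint law; its mean converges to $\int_1^\infty \varphi(x)\,x^{1/\gamma_G}\cdot \tfrac{d}{dx}H^{1,\circ}(x)\,dx = \int_1^\infty \varphi\,dF^\circ$, the last equality being exactly the cancellation that makes the EKM estimator target $\gamma_F$ rather than $\gamma_H$. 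The remaining two terms, $\frac1k\sum \gamma^t_1(Z^\ast_{i,k})(1-\delta)$ and $\frac1k\sum \gamma^t_2(Z^\ast_{i,k})$, are handled the same way and one checks that their limits cancel, since $E[\gamma^\circ_1(Z)(1-\delta)] = E[\gamma^\circ_2(Z)]$ by the same integration-by-parts identity underlying Stute's CLT \eqref{cor1.2stute} (there the centering is $\widetilde F$; here the analogous identity forces the net contribution of the $\gamma_1,\gamma_2$ terms to vanish in the limit). To make the WLLN rigorous I would truncate $\varphi\gamma_0$, $\gamma_1$, $\gamma_2$ at a large level, control the tails uniformly in $t$ using Condition~\ref{envelope_moment} (which gives an $L^2$, hence $L^1$ with room to spare, bound on $\bar\varphi(x)x^{1/\gamma_G}$ against the limiting density $x^{-1/\gamma_F-1}$), and invoke Potter bounds to dominate $H^t$ and $\gamma^t_j$ by $t$-independent integrable functions on $[1,\infty)$.

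Combining, the bracketed expression in \eqref{T4new} converges conditionally in probability to $S_\circ(\varphi)$ as $n\to\infty$, uniformly in large $t$, and Theorem~\ref{T4} gives $k^{1/2}r_{k,n}\to 0$ conditionally, hence a fortiori $r_{k,n}\to 0$; therefore $S_{k,n}(\varphi)\to S_\circ(\varphi)$ conditionally on $Z_{n-k,n}=t$, uniformly for $t$ large. Finally I would unconditional: for any $\epsilon,\eta>0$ pick $t_0$ so that $\sup_{t\ge t_0}\pr(|S_{k,n}(\varphi)-S_\circ(\varphi)|>\epsilon\mid Z_{n-k,n}=t)<\eta$ for $n$ large, and use $\pr(Z_{n-k,n}<t_0)\to 0$ (intermediate $k_n$) to conclude $\pr(|S_{k,n}(\varphi)-S_\circ(\varphi)|>\epsilon)<2\eta$ eventually. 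The main obstacle I anticipate is not any single limit but the uniformity in $t$ throughout: one must verify that the dominating functions furnished by Potter bounds and Condition~\ref{envelope_moment} are genuinely $t$-free on all of $[1,\infty)$ (the $\varepsilon$ slack in $\alpha(\varepsilon)$ is exactly what buys this), and that the weak convergence $H^t\Rightarrow$ Pareto together with the convergence of $\gamma^t_j$ can be upgraded to convergence of the relevant integrals against these envelopes — essentially a uniform-integrability argument that has to be run simultaneously in $n$ (through $k_n$) and $t$.
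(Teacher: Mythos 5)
Your outline follows the paper's proof essentially step for step: condition on $Z_{n-k,n}=t$, use the Theorem~\ref{T4} decomposition to reduce to a conditional law of large numbers for the i.i.d.\ summands $W_j$, compute $\amsmathbb{E} W^t=\int_1^\infty\varphi\,\dd F^t\to S_\circ(\varphi)$ via the exact cancellation $\amsmathbb{E}[\gamma_1^t(V^t)(1-\delta^t)]=\amsmathbb{E}[\gamma_2^t(V^t)]$ (the paper observes this at every $t$, not just in the $\circ$-limit), control $(1+\delta)$-moments uniformly in large $t$ via Condition~\ref{envelope_moment} and Potter bounds, and uncondition by splitting on $\{Z_{n-k,n}<T\}$. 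The only difference is presentational: the paper implements the conditional LLN through characteristic functions (Segers' Lemma~A.1, yielding a remainder $|\eta(z,t)|\le m|z|^{1+\delta}$ with $m$ free of $t$), whereas you propose truncation plus dominated convergence -- equivalent in substance, and both hinge on exactly the $t$-uniform domination you correctly flag as the main obstacle.
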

\begin{remark} Weak consistency of extreme Kaplan--Meier integrals under laxer assumptions than Condition~\ref{envelope_moment} may be possible to establish with an approach as in Theorem 1.1 in~\cite{stutewang}, where the existence of the first moment of the (possibly modified) target cdf is assumed for consistency of the standard Kaplan--Meier integral. The proof of the latter result uses some specialized techniques which fall far from our setup, for instance by using reverse-time supermartingales. 
%In order not to further overload the notation and proofs, and to keep the narrative coherent, we simply continue to use the natural Condition~\ref{envelope_moment} in Theorem~\ref{Tconsistency}.  
\end{remark}

The next theorem states the asymptotic normality of EKM integrals under the assumptions of Theorem~\ref{T4}. However, note that the centering sequence is random.

To formulate this theorem, let us introduce some notation. Denote by $X^\circ$ and $Y^\circ$ independent random variables with cdfs $F^\circ$ and $G^\circ,$ respectively, where $F^\circ$ is defined above and $G^\circ(x) = (1 - x^{-1/\gamma_G})I(x\ge 1).$  Define $V^\circ = \min(X^\circ, Y^\circ)$ and $\delta^\circ = I(X^\circ\le Y^\circ).$ It is straightforward to see that $V^\circ\independent\delta^\circ$.
Define also $H^\circ,$ $ H^{j,\circ},$ $j=0,1,$ by replacing $F^t$ and $G^t$ in the definitions of $H^t,$ $ H^{j,t},$ $j=0,1,$ respectively, with  $F^\circ$ and $G^\circ.$ Finally, define $\gamma_j^\circ,$ $j=0,1,2$ by replacing $H^t,$ $ H^{j,t},$ $j=0,1,$ with $H^\circ,$ $ H^{j,\circ},$ $j=0,1,$ in the definitions of $\gamma_j^t,$ $j=0,1,2,$ respectively. 

Define the random variable
\begin{equation}W^\circ(\varphi) = \varphi(V^\circ)\gamma_0^\circ(V^\circ)\delta^\circ + \gamma_1^\circ(V^\circ)(1 - \delta^\circ) - \gamma_2^\circ(V^\circ).\label{wcirc}\end{equation}
The variance of $W^\circ(\varphi)$ turns out to be the correct asymptotic variance in all of our weak convergence results.

\begin{theorem}\label{T1an} Assume the conditions of Theorem~\ref{T4}. Then
\begin{align}\label{main}\sqrt{k}\int \varphi\, \dd (\amsmathbb{F}_{k,n} - F^{Z_{n-k,n}}) \stackrel{d}{\to} N(0, \sigma^2_\varphi),\end{align}
where $\sigma^2_\varphi ={\rm var} (W^\circ(\varphi)).$
\end{theorem}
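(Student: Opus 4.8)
\textbf{Proof plan for Theorem \ref{T1an}.}
The plan is to reduce the unconditional statement to the conditional decomposition of Theorem \ref{T4} and then apply a conditional central limit theorem, uniformly in the conditioning value $t$. First I would condition on $Z_{n-k,n}=t$. By Theorem \ref{T4}, on this event $S_{k,n}(\varphi)$ equals the centering quantity $\int\varphi\,\dd F^{t}$ plus the empirical average
\[
\frac{1}{k}\sum_{i=1}^{k}\Big\{\varphi(Z^\ast_{i,k})\gamma_0^t(Z^\ast_{i,k})\delta_{[n-k+i:k]}+\gamma_1^t(Z^\ast_{i,k})(1-\delta_{[n-k+i:k]})-\gamma_2^t(Z^\ast_{i,k})\Big\}
\]
minus $\int\varphi\,\dd F^{t}$, plus a remainder $r_{k,n}$ with $k^{1/2}r_{k,n}\to0$ in probability uniformly in large $t$. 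As recalled just before the definition of the EKM estimator, given $Z_{n-k,n}=t$ the vector $(Z^\ast_{1,k},\dots,Z^\ast_{k,k})$ together with the attached censoring indicators is distributed as the order statistics of an i.i.d.\ sample of size $k$ from the censored model built out of $H^t$ (equivalently from $F^t$ and $G^t$); crucially, the summands above are symmetric functions of the sample, so the order-statistics indexing can be replaced by the original i.i.d.\ indexing. Thus, conditionally on $Z_{n-k,n}=t$, the bracketed average is an i.i.d.\ average of copies of $W^t(\varphi):=\varphi(V^t)\gamma_0^t(V^t)\delta^t+\gamma_1^t(V^t)(1-\delta^t)-\gamma_2^t(V^t)$, where $(V^t,\delta^t)$ is the censored pair under $F^t,G^t$, and its mean is exactly $\int\varphi\,\dd \widetilde{F^t}$, which for large $t$ (eventual continuity of $H$) coincides with $\int\varphi\,\dd F^t=\int\varphi\,\dd F^{t}$, i.e.\ with the centering in \eqref{main}; so the recentered average has conditional mean $o(k^{-1/2})$ and the Stute-type identity \eqref{cor1.2stute} pins down its conditional variance as ${\rm var}(W^t(\varphi))$.

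Next I would establish convergence of the conditional laws. Since $F^t\to F^\circ$ and $G^t\to G^\circ$ uniformly on $[1,\infty)$ by regular variation (the uniform convergence theorem, \cite{bingham}), the pair $(V^t,\delta^t)$ converges in distribution to $(V^\circ,\delta^\circ)$, and the auxiliary functions $H^t,H^{j,t},\gamma_j^t$ converge to $H^\circ,H^{j,\circ},\gamma_j^\circ$; hence $W^t(\varphi)\stackrel{d}{\to}W^\circ(\varphi)$. Condition \ref{envelope_moment} is used here twice: to get a uniform (in large $t$) integrable majorant of $(W^t(\varphi))^2$ — note $\gamma_0^t$ is controlled by $(1-G^t)^{-1}$ which is uniformly $\lesssim x^{1/\gamma_G}$ near $\infty$ by Potter bounds, so the integrand $\bar\varphi(x)^2 x^{\alpha(\varepsilon)}$ is precisely the right majorant — and consequently to obtain ${\rm var}(W^t(\varphi))\to{\rm var}(W^\circ(\varphi))=\sigma_\varphi^2$ together with a uniform Lindeberg/uniform-integrability condition for the triangular array $\{W^t(\varphi_{i})/\sqrt k\}_{i\le k}$. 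With these ingredients, the Lindeberg--Feller CLT applied conditionally gives, for each fixed large $t$,
\[
\sqrt{k}\Big(S_{k,n}(\varphi)-\textstyle\int\varphi\,\dd F^{t}\Big)\ \Big|\ \{Z_{n-k,n}=t\}\ \stackrel{d}{\to}\ N(0,\sigma_\varphi^2),
\]
and I would phrase it so the convergence is uniform over $t\ge t_0$ (this is where the ``$\limsup_{t\to\infty}$ then $\lim_{n\to\infty}$'' order in Theorem \ref{T4} is exploited).

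Finally I would remove the conditioning. Writing $\psi_{k,n}(t)$ for the conditional characteristic function of $\sqrt k(S_{k,n}(\varphi)-\int\varphi\,\dd F^{t})$ evaluated at a fixed $u$, the previous step gives $\psi_{k,n}(t)\to e^{-u^2\sigma_\varphi^2/2}$ as $n\to\infty$, uniformly for $t$ large; since $Z_{n-k,n}\stackrel{P}{\to}\infty$ for an intermediate sequence $k_n$, integrating $\psi_{k,n}(Z_{n-k,n})$ against the law of $Z_{n-k,n}$ and using dominated convergence (characteristic functions are bounded by $1$) yields $\mathbb{E}\,e^{iu\sqrt k(S_{k,n}(\varphi)-\int\varphi\,\dd F^{Z_{n-k,n}})}\to e^{-u^2\sigma_\varphi^2/2}$, which is \eqref{main}. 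The main obstacle is the uniformity in $t$: one must show that the $o_P(k^{-1/2})$ remainder in Theorem \ref{T4}, the vanishing of the recentered conditional mean, and the Lindeberg condition all hold uniformly for $t$ in a neighbourhood of $\infty$, so that the randomness of the conditioning value $Z_{n-k,n}$ (rather than a deterministic sequence $t_n\to\infty$) can be handled cleanly; all of the required uniform bounds come from Condition \ref{envelope_moment} combined with Potter-type bounds on $F^t,G^t,\gamma_j^t$, but assembling them into a single uniform statement and interchanging the $t$- and $n$-limits in the right order is the delicate part.
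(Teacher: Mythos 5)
Your proposal is correct and follows essentially the same route as the paper: condition on $Z_{n-k,n}=t$, invoke Theorem~\ref{T4} to replace $S_{k,n}(\varphi)$ by an i.i.d.\ average of copies of $W^t(\varphi)$ with conditional mean $\int\varphi\,\dd F^t$ plus a uniformly negligible remainder, apply a conditional CLT uniformly in large $t$ (the paper does this via the characteristic-function expansion of Lemma~A.1 in~\cite{segers} with a uniform bound on $\amsmathbb{E}|W^t-\int\varphi\,\dd F^t|^{2+\delta}$ from Condition~\ref{envelope_moment} and Potter bounds, which is exactly the quantitative form of your Lindeberg argument), and finally remove the conditioning by integrating the conditional characteristic function against the law of $Z_{n-k,n}$, using $Z_{n-k,n}\stackrel{P}{\to}\infty$ and dominated convergence. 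The one cosmetic slip is that the conditional mean of the recentered average is exactly $0$ for large $t$ (since $\widetilde{F^t}=F^t$ throughout when $H$ is eventually continuous and both tails are unbounded), not merely $o(k^{-1/2})$, but this does not affect the argument.
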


\begin{remark}\label{Rreplacement}
It is possible to replace the random centering sequence $\int \varphi \dd F^{Z_{n-k,n}}$ in \eqref{main} with its non-random counterpart $\int \varphi \dd F^{U_H(n/k)}$, where $U_H$ is the tail quantile function of $H,$ i.e. 
\[U_H(x) = \inf\{y: H(y) \ge 1 - 1/x\}, \quad x>1,\]
under a mild condition imposed on $U_H$ and $U_F.$ We provide this result in Section \ref{AppA} of the Supplementary Material. This is of independent interest, since it is not always possible to replace a non-random centering sequence with a constant in limit relations appearing in extreme value theory. 
%For instance, it is not possible in Smirnov's lemma and its corollary regarding limit behavior of intermediate order statistics; see, e.g., Theorems 2.2.1 and 2.4.1 in~\cite{dehaan}. 
\end{remark}

A non-random replacement of the centering sequence is possible in our case if the intermediate sequence $k = k(n)$ does not grow too fast with respect to $n,$ {as is precised in Theorem~\ref{T3an} below}. The exact speed is determined through the quality of the regular variation property of $U_F,$ which in turn is governed by a second-order parameter denoted by $\rho,$ see~\cite{resnickdehaan}. Thus, to control this speed it is necessary to impose the following standard second-order condition.

\begin{condition}\label{secondorder}
The tail quantile function of $F$ satisfies
\begin{align}\label{secondorderrelation} \lim_{t\to\infty} \frac{U_F(tx)/U_F(t) - x^{\gamma_F}}{a(t)} = x^{\gamma_F} h_\rho(x), \quad x\ge 1,\end{align}
for some $\rho\le0,$ $a(t)$ a positive or negative function, regularly varying with index $\rho,$ and satisfying $a(t) \to 0$ as $t\to\infty,$ and
\begin{equation}\label{h}h_\rho(x) = \left\{\begin{array}{cc} \log x, & \text {if } \rho = 0, \\ (x^\rho - 1)/\rho, & \text{ otherwise}.
\end{array}\right.\end{equation}
\end{condition}

\noindent Properties and possible choices for the function $a(t)$ may be found in Section 2.3, \cite{dehaan}.
\begin{theorem} \label{T3an} Assume the conditions of Theorem~\ref{T4}. Furthermore, assume $\varphi$ %satisfies Condition~\ref{cond:phi_deriv} 
has a continuous and eventually monotone derivative $\varphi^\prime,$ 
and $U_F$ satisfies the second-order Condition~\ref{secondorder}. Let $\sqrt{k}\,a(c_n)\to \lambda$ with finite $\lambda$ and $c_n = (1 - F(U_H(n/k)))^{-1}.$ %and if $\lambda\neq0$ assume additionally that $\varphi^\prime$ is %a.e. continuous. 
Then
\begin{equation}\label{T3anassertion}\sqrt{k}\int \varphi\, \dd (F^{Z_{n-k,n}} - F^{\circ}) \stackrel{P}{\to} \lambda C(\gamma_F, \rho),\end{equation} where
\[C(\gamma_F, \rho) = \int_1^\infty x \varphi^\prime(x) h_\rho(x^{1/\gamma_F}) F^\circ(\dd x).\]
\end{theorem}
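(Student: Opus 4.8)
\textbf{Proof proposal for Theorem \ref{T3an}.}

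The plan is to reduce the statement to a deterministic convergence for the tail-counterpart integrals and then control the fluctuation coming from the randomness of $Z_{n-k,n}$. First I would write, using integration by parts (justified by Condition \ref{cond:phi_deriv}, which makes all boundary terms vanish),
\[
\int \varphi\, \dd (F^{t} - F^{\circ}) = -\int_1^\infty \varphi^\prime(x)\big(F^{t}(x) - F^\circ(x)\big)\dd x,
\]
and similarly with $t$ replaced by the deterministic level $U_H(n/k)$. The key analytic input is a uniform (in $x\ge 1$) expansion of $F^{t}(x) - F^\circ(x)$: since $1 - F^{t}(x) = (1-F(tx))/(1-F(t))$, the second order Condition \ref{secondorder} on $U_F$ — equivalently, the standard second order condition on $1-F$ — gives
\[
\frac{1 - F^{t}(x)}{x^{-1/\gamma_F}} - 1 \;=\; A(t)\,\frac{x^{\rho/\gamma_F}-1}{\gamma_F\rho}\,(1+o(1)) \qquad (t\to\infty),
\]
uniformly on $x\ge 1$ after inserting a Potter-type $x^{\varepsilon}$ slack, where $A(t)$ is regularly varying with index $\rho$ and is $\asymp a\big((1-F(t))^{-1}\big)$. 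Plugging this into the integration-by-parts formula and using Condition \ref{cond:phi_deriv} to dominate $\varphi^\prime(x)$ against the $x^{\rho/\gamma_F - 1/\gamma_F+\varepsilon^\prime}$ factor (so the integral converges and dominated convergence applies) yields
\[
\frac{1}{A(t)}\int \varphi\, \dd (F^{t} - F^\circ) \;\longrightarrow\; \int_1^\infty x\varphi^\prime(x) h_\rho(x^{1/\gamma_F})\,F^\circ(\dd x) \;=\; C(\gamma_F,\rho),
\]
after a change of variables converting $\int \varphi^\prime(x)(x^{\rho/\gamma_F}-1)x^{-1/\gamma_F}\dd x$ into the stated form; here I use $h_\rho(x^{1/\gamma_F}) = (x^{\rho/\gamma_F}-1)/(\gamma_F\rho)$ for $\rho\ne0$ and the logarithmic version for $\rho=0$.

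Next I would handle the random argument. Write the left side of \eqref{T3anassertion} as
\[
\sqrt{k}\int \varphi\, \dd (F^{Z_{n-k,n}} - F^\circ)
= \sqrt k\int \varphi\, \dd (F^{Z_{n-k,n}} - F^{U_H(n/k)})
+ \sqrt k\int \varphi\, \dd (F^{U_H(n/k)} - F^\circ).
\]
The first term tends to $0$ in probability by Theorem \ref{T2an}. For the second, deterministic, term, put $t = U_H(n/k)$, so that $1 - F(t) = 1 - F(U_H(n/k))$ and hence $c_n = (1-F(U_H(n/k)))^{-1}$, giving $A(U_H(n/k)) \sim a(c_n)$ up to the asymptotic equivalence between $A$ and $a\circ(1-F)^{-1}$ (both regularly varying with index $\rho$, tied together through the second order condition). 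By the displayed deterministic limit, $\int \varphi\, \dd (F^{U_H(n/k)} - F^\circ) = A(U_H(n/k))\,(C(\gamma_F,\rho)+o(1))$, and multiplying by $\sqrt k$ and using $\sqrt k\, a(c_n)\to\lambda$ produces the limit $\lambda C(\gamma_F,\rho)$. The extra hypothesis that $\varphi^\prime$ be continuous when $\lambda\ne0$ is used precisely to make the uniform expansion of $F^t$ interact cleanly with $\varphi^\prime$ in the dominated-convergence step — when $\lambda=0$ the whole second term is $o_P(1)$ regardless and no such regularity is needed.

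The main obstacle is establishing the uniformity over the unbounded range $x\in[1,\infty)$ in the second order expansion of $F^t(x)-F^\circ(x)$ simultaneously with the integrability of $\varphi^\prime$ against it: one must combine the uniform inequalities for second order regular variation (as in \cite{dehaan}, e.g. the sharpened Potter bounds with an arbitrarily small exponent $\varepsilon$) with Condition \ref{cond:phi_deriv}, choosing $\varepsilon < \varepsilon^\prime$ so that the product $\bar{\varphi^\prime}(x)\,x^{-1/\gamma_F}\cdot x^{\rho/\gamma_F}\cdot x^{\varepsilon}$ remains integrable on $[1,\infty)$ (note $\rho\le 0$ helps). A secondary technical point is the passage from $a(t)$ indexed by the quantile scale to $A(t)=$ (second order function of $1-F$) and then to $a(c_n)$; this is routine bookkeeping with regular variation and the inverse relationship between $U_F$ and $1-F$, but must be done carefully to match the normalization $c_n = (1-F(U_H(n/k)))^{-1}$ in the statement. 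Once these two points are in place, the result follows by combining them with Theorem \ref{T2an} as above.
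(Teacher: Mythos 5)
Your proposal takes a genuinely different route from the paper's, and it contains one real gap.

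\textbf{The route.} The paper's proof works on the quantile side: it writes the integral as $\mu(U_F^{\leftarrow}(Z_{n-k,n})) - \mu(\infty)$ with $\mu(s) = \amsmathbb{E}\,\varphi(U_F(s\xi)/U_F(s))$, applies the mean value theorem to $\varphi$, bounds $|U_F(s\xi)/U_F(s) - \xi^{\gamma_F}|$ by $K\xi^{\gamma_F+\varepsilon'}|a(s)|$ via the second order condition (Segers' Lemma 4.6), and uses dominated convergence to get $(\mu(s)-\mu(\infty))/a(s)\to C(\gamma_F,\rho)$. The replacement of the random $a(U_F^{\leftarrow}(Z_{n-k,n}))$ by $a(c_n)$ is then done \emph{directly} via Smirnov's lemma and regular variation of $a$, $(1-F)^{-1}$ and $U_H$, with no appeal to Theorem \ref{T2an}. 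You instead work on the distribution-function side via integration by parts (transferring the derivative onto $\varphi$ with the boundary terms killed by Condition \ref{cond:phi_deriv}), translate the second order condition from $U_F$ to $1-F$, and handle the random argument by splitting $F^{Z_{n-k,n}} - F^\circ = (F^{Z_{n-k,n}} - F^{U_H(n/k)}) + (F^{U_H(n/k)} - F^\circ)$ and invoking Theorem \ref{T2an} for the first piece. Both are coherent ways to move the derivative onto $\varphi$ and to freeze the random level; your version is perhaps more transparent for readers thinking in terms of distribution functions, while the paper's avoids having to translate the second order condition across the $U_F\leftrightarrow 1-F$ duality.

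\textbf{The gap.} Invoking Theorem \ref{T2an} smuggles in the von Mises conditions \eqref{prime} on \emph{both} $U_F$ and $U_H$, which are not among the hypotheses of Theorem \ref{T3an}. Condition \ref{secondorder} plus regular variation does not imply the differentiability required by \eqref{prime}, so as written your argument proves a strictly weaker theorem. The fix is to abandon the split and replace the random level directly, exactly as the paper does: from your uniform expansion you get $\int\varphi\,\dd(F^{Z_{n-k,n}}-F^\circ) = A(Z_{n-k,n})\big(C(\gamma_F,\rho)+o_P(1)\big)$, and then $\sqrt{k}A(Z_{n-k,n})\stackrel{P}{\to}\lambda$ follows from $\sqrt{k}a(c_n)\to\lambda$, Smirnov's lemma $\xi_{n-k,n}/(n/k)\stackrel{P}{\to}1$, and regular variation of the composed functions — no von Mises condition required. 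You also gloss over the exact normalizing constant in the $a\leftrightarrow A$ passage between the quantile-side and distribution-side second order functions; that bookkeeping must come out right to match $c_n=(1-F(U_H(n/k)))^{-1}$ and the stated $C(\gamma_F,\rho)$, but it is indeed routine once done carefully (de Haan--Ferreira, Theorem 2.3.9). A smaller point: vanishing of the boundary term at infinity in your integration by parts requires $\varphi(x)\big(F^t(x)-F^\circ(x)\big)\to 0$, which is not completely automatic from Condition \ref{cond:phi_deriv} alone — you should argue it from the decay rate of $F^t-F^\circ$ supplied by the second order expansion together with the envelope bound on $\varphi'$.
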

\noindent As an immediate consequence of Theorems~\ref{T1an} and \ref{T3an}, we get the following result.
\begin{corollary}\label{R2} Under the assumptions of Theorem~\ref{T3an}, we have
\begin{equation}\sqrt{k}\int \varphi\, \dd (\amsmathbb{F}_{k,n} - F^{\circ}) \stackrel{d}{\to} N(\lambda C(\gamma_F, \rho), \sigma^2_\varphi).\label{seq2}\end{equation}
\end{corollary}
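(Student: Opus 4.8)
\textbf{Proof proposal for Corollary \ref{R2}.}

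The plan is to obtain \eqref{seq2} by combining the distributional convergence \eqref{main} of Theorem \ref{T1an} with the in-probability convergence \eqref{T3anassertion} of Theorem \ref{T3an} via Slutsky's lemma, after writing the quantity of interest as a telescoping sum of the two pieces. Concretely, for the centering decomposition one writes
\begin{equation*}
\sqrt{k}\int \varphi\, \dd (\amsmathbb{F}_{k,n} - F^{\circ}) = \sqrt{k}\int \varphi\, \dd (\amsmathbb{F}_{k,n} - F^{Z_{n-k,n}}) + \sqrt{k}\int \varphi\, \dd (F^{Z_{n-k,n}} - F^{\circ}),
\end{equation*}
so that the first summand converges in distribution to $N(0,\sigma^2_\varphi)$ by Theorem \ref{T1an} (whose hypotheses are exactly the conditions of Theorem \ref{T4}, hence in force here), and the second converges in probability to the constant $\lambda C(\gamma_F,\rho)$ by Theorem \ref{T3an} (whose extra hypotheses --- Condition \ref{cond:phi_deriv}, the second order Condition \ref{secondorder} on $U_F$, and $\sqrt{k}\,a(c_n)\to\lambda$, plus continuity of $\varphi^\prime$ when $\lambda\neq 0$ --- are precisely the assumptions of Corollary \ref{R2}).

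The only subtlety is that Slutsky's lemma is usually quoted for a fixed pair of sequences, whereas here the first term converges in distribution and the second converges in probability to a \emph{constant}; this is the classical form of the lemma: if $A_n \stackrel{d}{\to} A$ and $B_n \stackrel{P}{\to} b$ with $b$ a constant, then $A_n + B_n \stackrel{d}{\to} A + b$. Applying this with $A_n$ the first summand, $A\sim N(0,\sigma^2_\varphi)$, $B_n$ the second summand and $b = \lambda C(\gamma_F,\rho)$ yields
\begin{equation*}
\sqrt{k}\int \varphi\, \dd (\amsmathbb{F}_{k,n} - F^{\circ}) \stackrel{d}{\to} N(0,\sigma^2_\varphi) + \lambda C(\gamma_F,\rho) \stackrel{d}{=} N\big(\lambda C(\gamma_F,\rho),\,\sigma^2_\varphi\big),
\end{equation*}
which is exactly \eqref{seq2}.

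There is essentially no hard part: the entire content has already been established in Theorems \ref{T1an} and \ref{T3an}, and the corollary is a routine bookkeeping argument. The one point worth a line of care is consistency of the hypotheses --- one should note that the assumptions of Theorem \ref{T3an} subsume those of Theorem \ref{T1an} (both require the conditions of Theorem \ref{T4}, and Theorem \ref{T3an} additionally imposes Condition \ref{cond:phi_deriv} and Condition \ref{secondorder}), so that both ingredients are simultaneously available under the stated assumptions of Corollary \ref{R2}. After that, Slutsky's lemma closes the argument.
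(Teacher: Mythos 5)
Your proof is correct and follows exactly the route the paper intends: the paper presents Corollary \ref{R2} as "an immediate consequence of Theorems \ref{T1an} and \ref{T3an}" with no separate argument, and your telescoping decomposition plus Slutsky's lemma is precisely that implicit argument. Nothing to add.
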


\begin{remark}
Notice that for Theorem~\ref{T3an} to hold, we have required Condition~\ref{envelope_moment}. In particular, if $\varphi$ grows slower than a power function, the condition $\gamma_F<\gamma_G$ implies the required integrability. This non-heavy censoring regime for asymptotic normality of censored extremes has been observed before for related estimators; see for instance
Remark 1 in~\cite{bww}.
\end{remark}

%\begin{remark}
%Analysing the joint behavior of empirical versions of $H^{0,t}$ and $H^{1,t}$ is a somewhat direct route to prove asymptotic normality of $S_{k,n}(\varphi)$ using the functional delta method. However, there is an important caveat in that such assertion can only be proved for differentiable $\varphi$ and integrals up to an endpoint $T$ with $F(T)<1$,  {which can be interpreted as $\varphi$ satisfying a technical truncation condition}. The above approach allows us to establish asymptotic results on the \textit{maximal interval}, which in turn allows integration of infinite-support $\varphi$ functions.
%\end{remark}

{ As an illustration of Theorem~\ref{T3an} and Corollary~\ref{R2}, let us consider asymptotic behavior of the censored analog of the celebrated Hill estimator \cite{hill1975simple} of a positive extreme value index. Setting $\varphi = \log$ in \eqref{extremeKM}, we get  
\begin{align}\widehat\gamma^{c,Hill}_{k} = S_{k,n}(\log) = \int_1^\infty \log(x)\amsmathbb{F}_{k,n} ({\rm d}x).\label{cHill_def}\end{align}
This estimator was introduced in equation (7) of \cite{worms2014new}, under a very different but equivalent expression, and a consistency result was provided. To the best of our knowledge, its asymptotic normality has not been established yet. The following result fills this gap in the literature.
\begin{corollary}\label{CorHill} Let $F$ and $G$ be regularly varying and eventually continuous cdfs with 
$\gamma_G>\gamma_F,$ Condition~\ref{secondorder} hold for $U_F$ and $k = k_n$ be an intermediate sequence. Let $\sqrt{k}\,a(c_n)\to \lambda$ with finite $\lambda$ and $c_n = (1 - F(U_H(n/k)))^{-1}.$ Then 
\[\sqrt{k}(\widehat\gamma^{c,Hill}_{k} - \gamma_F) \xrightarrow{d} N\Big(\frac{\lambda}{1 - \rho}, \frac{\gamma_G \gamma_F^2}{\gamma_G - \gamma_F}\Big).\]
\end{corollary}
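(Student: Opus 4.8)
\textbf{Proof plan for Corollary \ref{CorHill}.}
The plan is to apply Corollary \ref{R2} with the explicit choice $\varphi=\log$, so the core of the argument is checking that $\log$ satisfies the hypotheses of Theorem \ref{T3an} and then computing the two constants $C(\gamma_F,\rho)$ and $\sigma^2_\varphi$ in closed form. First I would verify Condition \ref{envelope_moment}: with $\bar\varphi(x)=\log x$ (for large $x$) the integrand in \eqref{onlycondition} is $(\log x)^2 x^{1/\gamma_G-1/\gamma_F-1+\varepsilon}$, which is integrable at infinity precisely because $\gamma_G>\gamma_F$ gives $1/\gamma_G-1/\gamma_F<0$, so one may pick $\varepsilon>0$ small enough to keep the exponent below $-1$; near $x=1$ there is no issue since $\log$ is bounded there. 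Condition \ref{cond:phi_deriv} is immediate: $\varphi^\prime(x)=1/x$ is continuous and eventually monotone, $\bar{\varphi^\prime}(x)=1/x$, and $\int_1^\infty x^{-1-1/\gamma_F+\varepsilon^\prime}\dd x<\infty$ for small $\varepsilon^\prime$. The von Mises condition \eqref{prime} for $U_F$ and $U_H$ follows from Condition \ref{secondorder} (which forces $\rho\le 0$ and regular variation of $U_F$ with index $\gamma_F$) together with \eqref{hfg} giving $1/\gamma_H=1/\gamma_F+1/\gamma_G$; strictly one should note that the stated second-order condition on $U_F$ propagates to a first-order von Mises property, and since $\gamma_G>\gamma_F$ one has $\gamma_H<\gamma_F$, so $U_H$ is regularly varying as well. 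If $\lambda\neq 0$ the extra continuity requirement on $\varphi^\prime$ holds trivially.

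Next I would compute $C(\gamma_F,\rho)$. By definition $C(\gamma_F,\rho)=\int_1^\infty x\varphi^\prime(x)h_\rho(x^{1/\gamma_F})F^\circ(\dd x)=\int_1^\infty h_\rho(x^{1/\gamma_F})\,\gamma_F^{-1}x^{-1/\gamma_F-1}\dd x$ since $x\varphi^\prime(x)=1$ and $F^\circ(\dd x)=\gamma_F^{-1}x^{-1/\gamma_F-1}\dd x$. Substituting $u=x^{1/\gamma_F}$, i.e. $x=u^{\gamma_F}$, $\gamma_F^{-1}x^{-1/\gamma_F-1}\dd x = u^{-2}\dd u$, so $C(\gamma_F,\rho)=\int_1^\infty h_\rho(u)u^{-2}\dd u$. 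For $\rho<0$, $h_\rho(u)=(u^\rho-1)/\rho$ and $\int_1^\infty (u^{\rho-2}-u^{-2})\rho^{-1}\dd u = \rho^{-1}\bigl(\tfrac{1}{1-\rho}-1\bigr)=\tfrac{1}{1-\rho}$; for $\rho=0$, $h_0(u)=\log u$ and $\int_1^\infty u^{-2}\log u\,\dd u=1=\tfrac{1}{1-\rho}$. Hence $C(\gamma_F,\rho)=1/(1-\rho)$ in both cases, giving the stated mean $\lambda/(1-\rho)$. I would also record that the limiting centering is $S_\circ(\log)=\int_1^\infty\log x\,F^\circ(\dd x)=\gamma_F$ (a standard Pareto moment), so that $\sqrt{k}\int\log\,\dd(\amsmathbb{F}_{k,n}-F^\circ)=\sqrt{k}(\widehat\gamma^{c,Hill}_k-\gamma_F)$, which is exactly the left-hand side of the corollary.

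The remaining and most delicate computation is the asymptotic variance $\sigma^2_\varphi=\mathrm{var}(W^\circ(\log))$ with $W^\circ$ as in \eqref{wcirc}. Here I would use that $F^\circ$ and $G^\circ$ are exact Pareto, so all the auxiliary functions are available in closed form: by \eqref{gamma_ex}-type formulas in the continuous case, $\gamma_0^\circ(x)=1/(1-G^\circ(x))=x^{1/\gamma_G}$, and $H^\circ$ is Pareto with index $1/\gamma_H=1/\gamma_F+1/\gamma_G$, while $V^\circ$ is Pareto$(1/\gamma_H)$ independent of $\delta^\circ$ with $\pr(\delta^\circ=1)=\gamma_H/\gamma_F=\gamma_G/(\gamma_F+\gamma_G)$ (the probability that the Fréchet event variable beats the Fréchet censoring variable, obtained from $\int_1^\infty(1-G^\circ(x))F^\circ(\dd x)$). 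One then computes $\gamma_1^\circ$ and $\gamma_2^\circ$ by integrating against the Pareto subdistributions $H^{0,\circ},H^{1,\circ}$ from \eqref{h01t}; each is an elementary power integral because $\varphi(z)\gamma_0^\circ(z)=z^{1/\gamma_G}\log z$ integrates against Pareto densities to Gamma-type constants. Assembling $W^\circ(\log)=\log(V^\circ)\,(V^\circ)^{1/\gamma_G}\delta^\circ+\gamma_1^\circ(V^\circ)(1-\delta^\circ)-\gamma_2^\circ(V^\circ)$ and taking its variance — using $\mathrm{var}=\mathbb E[(\cdot)^2]-(\mathbb E[\cdot])^2$ together with $\mathbb E[W^\circ(\log)]=S_\circ(\log)-$(telescoped zero)$=\gamma_F$ from Theorem \ref{T1an}'s centering consistency — should collapse, after cancellation, to $\gamma_G\gamma_F^2/(\gamma_G-\gamma_F)$. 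I expect the bookkeeping in this last variance computation to be the main obstacle: one must be careful that the double integral defining $\gamma_2^\circ$ and the conditional structure of $(V^\circ,\delta^\circ)$ are handled correctly, and it is cleanest to exploit the independence $V^\circ\independent\delta^\circ$ and known identities (e.g. that $\mathbb E W^\circ(\varphi)=S_\circ(\varphi)$) to shortcut parts of the algebra rather than expanding everything by brute force. Once $\sigma^2_\varphi$ is identified, the corollary follows directly from Corollary \ref{R2}.
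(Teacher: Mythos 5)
Your overall route is the intended one: Corollary \ref{CorHill} is designed as a direct application of Corollary \ref{R2} with $\varphi=\log$, and your verifications of Condition \ref{envelope_moment} (using $\gamma_G>\gamma_F$) and Condition \ref{cond:phi_deriv} are correct, as are the identities $S_\circ(\log)=\gamma_F$ and $C(\gamma_F,\rho)=1/(1-\rho)$.

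Two remarks, one minor and one substantive. The minor one: your excursion about the von Mises condition \eqref{prime} is both unnecessary and slightly misstated. Corollary \ref{R2} inherits its hypotheses from Theorem \ref{T3an}, which does not require \eqref{prime} for $U_F$ or $U_H$ — that condition is needed only in Theorem \ref{T2an}, which Corollary \ref{R2} bypasses. Moreover, Condition \ref{secondorder} does not by itself imply \eqref{prime}: the second-order relation \eqref{secondorderrelation} gives regular variation of $U_F$ but not differentiability with the von Mises limit. Since the condition is not needed here, the point is harmless, but the claimed implication should be dropped.

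The substantive issue is that the variance computation — which you correctly flag as the heart of the matter — is only promised, not done. It is worth carrying out, because a striking cancellation makes it much lighter than your ``Gamma-type constants'' phrasing suggests. With $\alpha_F=1/\gamma_F$, $\alpha_G=1/\gamma_G$, $\alpha_H=\alpha_F+\alpha_G$, one finds $\gamma_0^\circ(x)=x^{\alpha_G}$, then by elementary power-integrals
\[
\gamma_1^\circ(x)=x^{\alpha_G}\log x + \gamma_F x^{\alpha_G},
\qquad
\gamma_2^\circ(x)=x^{\alpha_G}\log x + (\gamma_F-\gamma_G)\bigl(x^{\alpha_G}-1\bigr).
\]
Substituting into \eqref{wcirc}, the $x^{\alpha_G}\log x$ terms cancel identically and one is left with
\[
W^\circ(\log) \;=\; \gamma_G\,(V^\circ)^{\alpha_G}\;-\;\gamma_F\,(V^\circ)^{\alpha_G}\delta^\circ \;+\;(\gamma_F-\gamma_G),
\]
which is an affine function of $U:=(V^\circ)^{\alpha_G}$ and $U\delta^\circ$ with no logarithm at all. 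Using $V^\circ\independent\delta^\circ$, $\pr(\delta^\circ=1)=\alpha_F/\alpha_H=\gamma_H/\gamma_F$, $\amsmathbb{E}[U]=\alpha_H/\alpha_F$ and $\amsmathbb{E}[U^2]=\alpha_H/(\alpha_F-\alpha_G)$ (finite precisely because $\gamma_G>\gamma_F$), the variance is a short two-line calculation that indeed collapses to $\gamma_G\gamma_F^2/(\gamma_G-\gamma_F)$, and $\amsmathbb{E}[W^\circ(\log)]=\gamma_F$ as a cross-check. This cancellation is the reason the variance is clean, and stating it explicitly would turn your plan into a proof.
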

}
{  \begin{remark}\label{Rnormalization} Notice that $S_{k,n}(1)=\int 1 \dd \amsmathbb{F}_{k,n} = \amsmathbb{F}_{k,n}(\infty)\le 1$, and then $\amsmathbb{F}_{k,n}/S_{k,n}(1)$ is a true cdf. One obtains from Corollary~\ref{R2} that $\sqrt{k}(S_{k,n}(1)-1)\stackrel{P}{\to}0$. In particular, this means that estimators normalized by $\amsmathbb{F}_{k,n}(\infty)$ have the same distributional limits as their non-normalized counterparts. Consequently,
$$\widehat\gamma^{c,Hill}_{k}/\amsmathbb{F}_{k,n}(\infty)-\widehat\gamma^{c,Hill}_{k}=o_\pr(k^{-1/2}).$$
The above relation is practically important, since normalized estimators can have slightly better finite-sample performance. 
We adopt normalization in the simulation studies of Section~\ref{sec:sim}.
\end{remark}
}
It is also of independent interest to investigate the asymptotic properties of the extreme Kaplan--Meier estimator $\amsmathbb{F}_{k,n}$ itself. Consider a function $\varphi_0(x) = I(x\le x_0)$ for some $x_0>1,$ then $S_{k,n}(\varphi_0) = \amsmathbb{F}_{k,n}(x_0).$ Since $\varphi_0$ is not continuous, one can consider a EKM integral of a continuously differentiable envelope function $\varphi_\epsilon,$ and then taking $\epsilon\to 0$ for $S_{k,n}(\varphi_\epsilon)$. 

%auxiliary function $\varphi_\epsilon,$ which is $0$ as $x\ge x_0+\epsilon,$ $1$ as $x\le x_0,$ and decreasing and continuously differentiable on $[x_0, x_0+\epsilon]$ with $\epsilon>0.$ Applying Theorem \ref{Tconsistency} and Theorem \ref{T3an} to $S_{k,n}(\varphi_\epsilon)$ and taking $\epsilon\to 0,$ we derive the following result. 
\begin{corollary}\label{cor:ekme} Let $F$ and $G$ be regularly varying and eventually continuous cdfs with 
$\gamma_G>\gamma_F.$ Then $\amsmathbb{F}_{k,n}(x_0)$ is consistent for $F^\circ(x_0).$ Moreover, under the assumptions of Corollary \ref{CorHill} we have
\[\sqrt{k}\big(\amsmathbb{F}_{k,n}(x_0) - F^\circ(x_0)\big) \xrightarrow{d} N\left(\lambda \frac{h_\rho(x_0^{1/\gamma_F})}{\gamma_F x_0^{1/\gamma_F}}, \frac{\gamma_H}{\gamma_F} \frac{x_0^{1/\gamma_G} - x_0^{-1/\gamma_F}}{x_0^{1/\gamma_F}}\right).\]
\end{corollary}

In particular, the above result can be used to prove consistency of censored analogues of probability weighted moments and maximum likelihood methods, which are well accepted in extreme value theory, see, e.g., Chapters 3.4 and 3.6.1 in \cite{dehaan}, and often applied to extreme quantile and tail estimation. 

%This is be the topic of our future research.

%\begin{remark} It is also possible to relax the eventual monotonicity assumption imposed on $\varphi$ by replacing $\varphi$ with continuous and eventually monotone function $\bar\varphi$ such that $\bar\varphi(x) \ge |\varphi(x)|,$ $x\ge 1,$ in Condition \ref{envelope_moment}. However, to the best of our knowledge, all functions used in similar situations in extreme value theory possess eventual monotonicity. \end{remark}

\section{Generalized residual estimators for non-censored data}\label{genresidual}

{ Thus far we have studied EKM integrals when distributions of both the event and the censoring variables are heavy-tailed. However, the extension of this methodology to all max-domains of attraction is not straightforward, as we precise below. In particular, the limits in probability of EKM integrals may not depend on $\gamma_F$ if the event variable distribution is not heavy-tailed, making them unsuitable for estimation.

To overcome this difficulty and find an appropriate normalization of EKM integrals leading to its non-degenerate limit behavior, we turn in this section to the theory of residual estimators developed in~\cite{segers} for heavy-tailed distributions and extend it to all max-domains of attraction. This route does not suffer normalization drawbacks. A key role in this extension is played by the so-called candidate functions having certain regular limit behavior; see Chapter 1 and Appendix B in~\cite{dehaan} and Section~\ref{Sectionexamples} in a Supplementary Material for examples.}

%\subsection{Generalized residual estimators} \label{genresidual}

{ This section studies generalized residual estimators $R^f_{k,n}$ \eqref{gre}, which are intrinsically related to generalized EKM integrals \eqref{new_ekmi}. The latter integrals are the main object of interest in Section~\ref{sec:exten}, where the methodology developed in Section~\ref{sec:ekmi} is extended to all max-domains of attraction. The critical link is viewing generalized residual estimators as integrals over an empirical cdf of normalized order statistics %$\{\xi_{n-i+1,n}/\xi_{n-k,n}\}_{i=1}^k$ 
of i.i.d. Pareto($1$) random variables. %$\{\xi_i\}_{i=1}^n.$
Upon replacement of this empirical cdf with the EKM estimator \eqref{fxikn}, we obtain a generalized EKM integral \eqref{new_ekmi} capable of dealing with censoring effects. In fact, the same kind of replacement is implicitly made in Section~\ref{sec:ekmi}, where standard EKM integrals could be seen as being derived from standard residual estimators.}

As before, let $X_1,\dots,X_n$ denote an i.i.d. sequence of random variables with cdf $F,$ and $X_{1,n} \le \ldots \le X_{n,n}$ denote the order statistics of $\{X_i\}_{i=1}^n.$  Denote by $U_F(t),$ the tail quantile function of $F,$ i.e. $U_F(t) = \inf\{x: F(x) \ge 1 - 1/t\}.$ When $F$ belongs to the Fr\'echet max-domain of attraction (that is $F\in \mathcal{D}(G_\gamma)$ with $\gamma>0$), \cite{segers} studied the limit behavior of (standard) residual estimators defined by \begin{equation}\label{standardresidual}R_{k,n} = \frac{1}{k}\sum_{i=1}^{k} \vartheta (X_{n-i+1,n}/X_{n-k,n}),\end{equation} where $\vartheta : [1,\infty) \to \amsmathbb{R}$ is some suitable function, which is here assumed to be %almost everywhere 
continuous.

\begin{theorem}[Theorem 4.1 in \cite{segers}]\label{T1} Let $|\vartheta (x)| \le A x^c,$ $x\ge 1,$ for some $A>0$ and $c<(2\gamma)^{-1}.$ Let $\xi, \xi_1, \xi_2, \ldots$ be independent Pareto$(1)$ random variables such that $X_i = U_F(\xi_i),$ $i\ge 1.$ Define $\mu(t) = \amsmathbb{E}[\vartheta (U_F(\xi t)/U_F(t)) I(U_F(t)>0)].$ Let $k=k_n$ be an intermediate sequence. Then
\begin{equation}\label{segers4.1} \sqrt{k}\big(R_{k,n} - \mu(\xi_{n-k,n})\big) \stackrel{d}{\to} N(0, {\rm var} [\vartheta (\xi^\gamma)]).\end{equation}
\end{theorem}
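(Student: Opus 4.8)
\textbf{Proof proposal for Theorem \ref{T1} (Theorem 4.1 in \cite{segers}).}

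The plan is to condition on the intermediate order statistic $\xi_{n-k,n}$ and exploit the well-known Rényi-type representation: given $\xi_{n-k,n}=t$, the normalized upper order statistics $\{\xi_{n-i+1,n}/t\}_{i=1}^{k}$ are distributed as the order statistics of $k$ i.i.d. random variables with the conditional law of $\xi/t$ given $\xi>t$, which is exactly Pareto$(1)$ on $[1,\infty)$ — crucially \emph{free of $t$}. Consequently, writing $\eta_1,\dots,\eta_k$ for i.i.d. Pareto$(1)$ variables, the conditional law of $R_{k,n}$ given $\xi_{n-k,n}=t$ coincides with that of $\frac1k\sum_{i=1}^k \vartheta\big(U_F(t\eta_i)/U_F(t)\big)\,I(U_F(t)>0)$, which is a genuine i.i.d. average. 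First I would therefore fix $t$ (eventually large, so $U_F(t)>0$), set $g_t(x) := \vartheta(U_F(tx)/U_F(t))$, and apply the ordinary CLT to the i.i.d. sum $\frac1k\sum_{i=1}^k g_t(\eta_i)$, obtaining for each fixed large $t$
\[
\sqrt{k}\Big(\tfrac1k\textstyle\sum_{i=1}^k g_t(\eta_i) - \amsmathbb{E}[g_t(\eta)]\Big) \stackrel{d}{\to} N\big(0,\,{\rm var}[g_t(\eta)]\big),
\]
and $\amsmathbb{E}[g_t(\eta)] = \mu(t)$ by definition. Since $\xi_{n-k,n}\stackrel{P}{\to}\infty$, the convergence $\xi_{n-k,n}=t\to\infty$ is automatic, so I would then need to upgrade this "for each fixed $t$" statement to an unconditional limit along the random $t=\xi_{n-k,n}$.

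The two things to control are (i) convergence of the variance, ${\rm var}[g_t(\eta)] \to {\rm var}[\vartheta(\xi^\gamma)]$ as $t\to\infty$, and (ii) a uniform negligibility of the CLT remainder so that the conditional convergence transfers to the random centering. For (i), regular variation of $U_F$ with index $\gamma$ gives $U_F(tx)/U_F(t)\to x^\gamma$ pointwise, hence $g_t(x)\to\vartheta(x^\gamma)$ by continuity of $\vartheta$; the growth bound $|\vartheta(x)|\le A x^c$ with $c<(2\gamma)^{-1}$ combined with Potter's bounds (the uniform convergence / Potter inequalities from \cite{bingham}) yields a dominating function of the form $\mathrm{const}\cdot x^{c(\gamma+\delta)}$ which is square-integrable against the Pareto$(1)$ law precisely because $2c\gamma<1$ (choosing $\delta$ small enough that $2c(\gamma+\delta)<1$). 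Dominated convergence then delivers $\amsmathbb{E}[g_t(\eta)^2]\to \amsmathbb{E}[\vartheta(\xi^\gamma)^2]$ and likewise for the first moment, hence the variance convergence. For the transfer in (ii), I would use a Berry--Esseen bound (or a Lindeberg/Lyapunov argument) for the i.i.d. average $\frac1k\sum g_t(\eta_i)$: the third (or $2+\epsilon$) absolute moment $\amsmathbb{E}|g_t(\eta)|^{2+\epsilon}$ is bounded uniformly in large $t$ by the same Potter-bound argument provided $c(2+\epsilon)\gamma<1$, which holds for small $\epsilon$ since $2c\gamma<1$; this gives a Kolmogorov-distance bound between the conditional law of $\sqrt k(R_{k,n}-\mu(t))/\sqrt{{\rm var}[g_t(\eta)]}$ and the standard normal that is $O(k^{-1/2})$ uniformly in large $t$. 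Passing $k\to\infty$ kills this, and then one may substitute $t=\xi_{n-k,n}$ and invoke $\xi_{n-k,n}\stackrel{P}{\to}\infty$ together with Slutsky (using variance convergence) to conclude \eqref{segers4.1}.

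The main obstacle is step (ii): making precise the statement "a conditional CLT that holds for each large $t$, with remainder uniform in $t$, implies the unconditional CLT when one plugs in the random $t=\xi_{n-k,n}$". The cleanest route is the quantitative one sketched above — a uniform (in $t$) Berry--Esseen bound reduces everything to the deterministic statement $\sup_{t\ge t_0}d_{\mathrm{Kol}}\big(\mathcal{L}(\sqrt k(R_{k,n}-\mu(t))\mid \xi_{n-k,n}=t),\,N(0,{\rm var}[g_t(\eta)])\big)\to 0$, after which conditioning on $\xi_{n-k,n}$ and dominated convergence over the (asymptotically degenerate at $\infty$) law of $\xi_{n-k,n}$ finishes the argument; the only care needed is that the normalizing variances ${\rm var}[g_t(\eta)]$ stay bounded away from $0$ and $\infty$ for large $t$, which follows from (i) once ${\rm var}[\vartheta(\xi^\gamma)]>0$ (the degenerate case being trivial). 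All the moment bookkeeping — verifying square- and $(2+\epsilon)$-integrability of the dominating Potter bound — is routine given $c<(2\gamma)^{-1}$, so I would not belabour it.
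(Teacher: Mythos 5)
Your proposal is sound and captures the same structural backbone as the paper: conditioning on the intermediate order statistic $\xi_{n-k,n}=t$, using the fact that the normalized upper order statistics given $\xi_{n-k,n}=t$ are distributed as the order statistics of $k$ i.i.d.\ Pareto$(1)$ variables (free of $t$), reducing to an i.i.d.\ sum, and then transferring the conditional limit to the unconditional one via a uniform-in-$t$ control. Note that the paper does not actually prove Theorem \ref{T1} -- it quotes Theorem 4.1 of \cite{segers} -- but it does prove its generalization, Theorem \ref{T1gen}, in the Supplementary Material, and the method there (following Segers) is a \emph{characteristic-function expansion}: one writes the conditional characteristic function $b_k(t)$ as a $k$-th power, applies an expansion of the form $\amsmathbb{E}[\exp(iz\,Z(t))]=1-\tfrac{z^2}{2}\amsmathbb{E}[Z(t)^2]+\zeta(z,t)$ with $|\zeta(z,t)|\le |z|^{2+\delta'}\amsmathbb{E}|Z(t)|^{2+\delta'}$, and uses a bound on $\limsup_{t\to\infty}\amsmathbb{E}|Z(t)|^{2+\delta'}$ to deduce that $b_k(t)\to\exp(-s^2\amsmathbb{E}[Z(t)^2]/2)$ uniformly in large $t$; the integration over the law of $\xi_{n-k,n}$ then kills the small-$t$ contribution by $\xi_{n-k,n}\stackrel{P}{\to}\infty$. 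Your route replaces the characteristic-function step with a Berry--Esseen / Kolmogorov-distance bound, which also needs the same uniform $(2+\epsilon)$-moment estimate (obtained identically by $c<(2\gamma)^{-1}$ plus Potter's bounds). Both routes are valid and rest on the same key ingredient; the characteristic-function expansion is perhaps marginally cleaner in that it directly proves pointwise convergence of the unconditional characteristic function by dominated convergence and needs no absolute-constant normalization or lower bound on ${\rm var}[g_t(\eta)]$ for the degenerate case, whereas your Berry--Esseen variant is more quantitative but requires the variance to be bounded away from zero and a small side argument for ${\rm var}[\vartheta(\xi^\gamma)]=0$ (which, as you note, can be handled directly by Chebyshev since $\amsmathbb{E}[Z(t)^2]\to 0$ there).
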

In further theorems 4.2 and 4.5 ibid., it was proven that $\mu(\xi_{n-k,n})$ in \eqref{segers4.1} can be replaced with the non-random limit $\amsmathbb{E}[\vartheta (\xi^\gamma)]$ under some additional conditions imposed on $F,$ $\vartheta$ and $k.$ 

{ Residual estimators are closely related to the EKM integrals studied in Section~\ref{sec:ekmi}. Indeed, the former can be represented as integrals over an empirical cdf built by normalized order statistics $\{X_{n-i+1,n}/X_{n-k,n}\}_{i=1}^k$: 
\begin{equation}\label{newrkn}R_{k,n} = \int \vartheta \dd F_{k,n},\end{equation} where $F_{k,n}(x) = k^{-1} \sum_{i=1}^k I(X_{n-i+1,n}/X_{n-k,n} \le x).$ Replacing $F_{k,n}$ with the EKM estimator $\amsmathbb{F}_{k,n}$ \eqref{kaplanmeierextreme} in \eqref{newrkn} yields an EKM integral \eqref{extremeKM}. The remainder of this section studies the fully-observed case. We refer to Section~\ref{sec:exten} for the generalization to censored data.}

{ The first difficulty one encounters is that} the direct extension of Theorem~\ref{T1} to the Gumbel and Weibull max-domains of attraction leads to a degenerate limit in \eqref{segers4.1} in almost all cases. Recall the notation $\tau_F = \inf\{x: F(x) = 1\},$ the right endpoint of $F.$ Specifically, we have the following result, whose proof is analogous to the proof of Theorem~\ref{T1}, and thus omitted.

\begin{lemma}\label{L1_bis}  Let $F \in \mathcal{D}(G_\gamma)$ with $\gamma\le 0$ and $\tau_F>0.$ Let $\vartheta : [1, \infty) \to \amsmathbb{R}$ satisfy $|\vartheta (x)|\le Ax^c,$  $x\ge 1,$ for some $A>0$ and $c>0.$ Assume $\mu(t),$ $k$ and $\xi, \xi_1, \xi_2, \ldots$ are as in Theorem~\ref{T1}. Then
\begin{equation}\label{segers_direct_mod} \sqrt{k}\big(R_{k,n} - \mu(\xi_{n-k,n})\big) \stackrel{P}{\to}0.\end{equation} 
\end{lemma}
A reason of a degenerate limit in \eqref{segers_direct_mod} is that for $\gamma\le 0$ and $\tau_F > 0$ the ratio $U_F(xt)/U_F(t),$ closely related to normalized order statistics, tends to $1$ for every $x>0$ as $t\to\infty$ (which follows from techniques in~\cite{dehaan}),
see for details Theorem~\ref{T1gen} and Corollary~\ref{Cor1}  {of the Supplementary Material.}

It follows that to derive a non-degenerate limit, a faster than $\sqrt{k}$ normalizing sequence is required in \eqref{segers_direct_mod}. Consequently, a downside of using a residual estimators technique to develop extreme value estimators ({both in presence of and without random censorship}) is that we may be faced with situations where positive and negative values of the extreme value index are allowed, but the rate of convergence of the estimator inconveniently varies. Another disadvantage is that $\vartheta(1)$ does not depend on $\gamma<0.$ This makes it impossible to construct extreme value index estimators based solely on $R_{k,n}.$

For this reason, let us consider an alternative and general approach, { through \textit{generalized residual estimators}}. As before, let $\xi, \xi_1, \xi_2, \ldots$ be independent Pareto$(1)$ random variables with $n$-th order statistics denoted by $\xi_{1,n}\le\ldots\le \xi_{n,n},$  {such that $X_i = U_F(\xi_i)$, $i\ge 1$.}

\begin{definition}\label{D5}
Let $f:[1,\infty)\times[1,\infty)\to [0,\infty)$, and $g,\,g_1,\,g_2:[1,\infty)\to [0,\infty)$ be measurable functions such that for all $x$ 
\begin{equation}\label{cond1} f(x,t) \to g(x), \quad t\to\infty, \end{equation}
and there exists $T>0$, such that for $t\ge T$
\begin{equation}\label{cond2} g_1(x) \le f(x,t)\le g_2(x).\end{equation}
{ We call $f$ a \emph{candidate function}. If $f(x,t)$ tends to $\infty$ as $x\to x_0$ for some $x_0$ which can be infinite, we assume that $g_1,\,g_2$ tend to $\infty$ together with $f(x,t)$.}
\end{definition}
{  \begin{example}\label{E1} Let $F\in \mathcal{D}(G_{\gamma}).$ Examples of candidate functions include \[\frac{U_F(xt)}{U_F(t)}, \; \frac{U_F(xt) - U_F(t)}{a_F(t)}, \; \frac{\log U_F(xt) - \log U_F(t)}{a_F(t)/U_F(t)},\quad \text{and}\quad \frac{\tau_F - U_F(xt)}{\tau_F - U_F(t)}\]
with corresponding limits $x^{\gamma_+},$ $h_\gamma(x),$ $h_{\gamma_-}(x)$ and $x^{\gamma_{-}},$ as $t\to\infty,$ where $a_F(t)$ is a so-called auxiliary function, see, e.g., Theorem 1.1.6 in~\cite{dehaan}. In Section~\ref{Sectionexamples} in Supplementary Material we discuss properties of the first three above candidate functions, whereas the first, third and fourth ones are utilized in Section~\ref{sec:evi_est}.
%, see also \cite{dehaan}. 
\end{example}}

Let $\vartheta :[0,\infty)\to\amsmathbb{R}$ be continuously differentiable and eventually monotone.
%\footnote{{  If $\vartheta$ and $\vartheta^{\prime}$ are eventually non-decreasing (resp. non-increasing), then it suffices to require that $g_2$ (resp. $g_1$) tends to $\infty$ together with $f(x,t).$}}. 
We are interested in the limit behavior of {the following \textit{generalized residual estimator}:}
\begin{equation}\label{gre}R^f_{k,n} = \frac{1}{k} \sum_{j=1}^{k} \vartheta \left(f\Big(\frac{\xi_{n-j+1,n}}{\xi_{n-k,n}}, \xi_{n-k,n}\Big)\right) = \int \vartheta(f(x, \xi_{n-k,n})) F^\xi_{k,n}(\dd x),\end{equation}
 {where $k<n$} and %$k=k_n$ is an intermediate sequence and 
$F^\xi_{k,n}(x) = k^{-1} \sum_{j=1}^k I(\xi_{n-j+1,n}/\xi_{n-k,n}\le x).$  {Note that taking $f(x,t) = U_F(xt)/U_F(t)$ in \eqref{gre} we get the standard residual estimator \eqref{standardresidual}.}

For the asymptotic results we require the following quantity:
\begin{equation}\label{muf}\mu_f(t) = \amsmathbb{E} \big[\vartheta (f(\xi,t))\big].\end{equation}

\begin{theorem}\label{T1gen} Let $k=k_n$ be an intermediate sequence and $\vartheta$ be continuous and eventually monotone. Assume for some $\delta>0$ 
\begin{equation}\label{cond4}\amsmathbb{E}\Big[\max \big\{|\vartheta (g_1(\xi))|, |\vartheta (g_2(\xi))|\big\}\Big]^{1+\delta}<\infty.\end{equation} Then 
$R^f_{k,n}$ is consistent for $\amsmathbb{E}[\vartheta (g(\xi))].$ If, moreover, \eqref{cond4} is satisfied for some $\delta>1,$ then
\begin{equation}\label{T1genassertion} \sqrt{k}\big(R^f_{k,n} - \mu_f(\xi_{n-k,n})\big) \stackrel{d}{\to} N\big(0, {\rm var} [\vartheta (g(\xi))]\big).\end{equation}
\end{theorem}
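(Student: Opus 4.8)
\textbf{Proof plan for Theorem \ref{T1gen}.}

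The plan is to condition on $\xi_{n-k,n}=t$ and exploit the classical representation of upper order statistics of i.i.d. Pareto$(1)$ variables: given $\xi_{n-k,n}=t$, the normalized upper order statistics $\{\xi_{n-j+1,n}/\xi_{n-k,n}\}_{j=1}^k$ are distributed as the order statistics of $k$ i.i.d. Pareto$(1)$ variables, independently of $t$ (this is the exact, non-asymptotic analogue of Lemma 3.4.1 in \cite{dehaan}). Hence, conditionally on $\xi_{n-k,n}=t$, the statistic $R^f_{k,n}$ has the same law as $k^{-1}\sum_{j=1}^k \vartheta(f(\eta_j,t))$ where $\eta_1,\dots,\eta_k$ are i.i.d. Pareto$(1)$; in particular its conditional mean is exactly $\mu_f(t)$ defined in \eqref{muf}. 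So the first step is to record that, conditionally, $\sqrt{k}\,(R^f_{k,n}-\mu_f(\xi_{n-k,n}))$ is a normalized sum of $k$ i.i.d.\ centered terms $\vartheta(f(\eta_j,t))-\mu_f(t)$.

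Next I would establish the two conditional limit statements uniformly in large $t$. For consistency: the dominated-convergence bound \eqref{cond2} together with \eqref{cond1} and the integrability \eqref{cond4} (with $\delta>0$) gives $\mu_f(t)\to\amsmathbb{E}[\vartheta(g(\xi))]$ as $t\to\infty$ and also that $\mathrm{var}[\vartheta(f(\eta,t))]$ stays bounded (indeed converges to $\mathrm{var}[\vartheta(g(\xi))]$); a conditional weak law of large numbers then yields $R^f_{k,n}-\mu_f(\xi_{n-k,n})\stackrel{P}{\to}0$ conditionally on $\xi_{n-k,n}=t$, uniformly for $t$ large, and since $\xi_{n-k,n}\stackrel{P}{\to}\infty$ (because $k=k_n$ is intermediate), unconditioning gives $R^f_{k,n}\stackrel{P}{\to}\amsmathbb{E}[\vartheta(g(\xi))]$. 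For asymptotic normality, when \eqref{cond4} holds with $\delta>1$ the centered summands have uniformly (in large $t$) bounded $(1+\delta)$-th absolute moments, so the Lyapunov condition holds with a constant independent of $t$; the Lyapunov CLT then gives, for each fixed large $t$,
\begin{equation*}
\sqrt{k}\,\big(R^f_{k,n}-\mu_f(\xi_{n-k,n})\big)\ \big|\ \{\xi_{n-k,n}=t\}\ \stackrel{d}{\to}\ N\big(0,\sigma^2(t)\big),\qquad \sigma^2(t)=\mathrm{var}[\vartheta(f(\eta,t))]\to\mathrm{var}[\vartheta(g(\xi))].
\end{equation*}

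The final step is to remove the conditioning. I would pass from the conditional convergence to the unconditional one via characteristic functions: write $\amsmathbb{E}[\exp(iu\sqrt{k}(R^f_{k,n}-\mu_f(\xi_{n-k,n})))]=\amsmathbb{E}[\psi_{k}(u,\xi_{n-k,n})]$ where $\psi_k(u,t)$ is the conditional characteristic function, show $\psi_k(u,t)\to e^{-u^2\mathrm{var}[\vartheta(g(\xi))]/2}$ uniformly for $t\ge T$ as $k\to\infty$, and use $\xi_{n-k,n}\stackrel{P}{\to}\infty$ together with dominated convergence ($|\psi_k|\le 1$) to conclude. The main obstacle is precisely making the conditional CLT \emph{uniform} in $t\ge T$: one must control the Berry--Esseen / Lyapunov remainder by a bound that does not depend on $t$, which is exactly where the uniform envelope domination \eqref{cond2} and the strengthened moment hypothesis \eqref{cond4} with $\delta>1$ are used — they guarantee that $\sup_{t\ge T}\amsmathbb{E}|\vartheta(f(\eta,t))-\mu_f(t)|^{1+\delta}<\infty$ and $\inf$ over a suitable range, or at least convergence, of $\sigma^2(t)$ away from degeneracy issues is handled by the limit $\sigma^2(t)\to\mathrm{var}[\vartheta(g(\xi))]$. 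Everything else (the Pareto order-statistics representation, the LLN, unconditioning) is routine once this uniformity is in hand.
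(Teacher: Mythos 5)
Your plan is essentially the paper's proof: condition on $\xi_{n-k,n}=t$, exploit the exact i.i.d.\ Pareto$(1)$ representation of the ratios to compute the conditional characteristic function, show it converges uniformly in $t\ge T$ using a moment bound furnished by the envelope domination \eqref{cond2} and \eqref{cond4}, and then uncondition via $|\psi_k|\le 1$, bounded convergence, and $\xi_{n-k,n}\stackrel{P}{\to}\infty$. The only cosmetic difference is that the paper controls the conditional characteristic function through a second-order expansion (Lemma A.1 of \cite{segers}) rather than invoking Berry--Esseen/Lyapunov by name, and it keeps the two limits separate — $\psi_k(u,t)\to e^{-u^2\sigma^2(t)/2}$ uniformly in $t\ge T$, then $\sigma^2(t)\to\mathrm{var}[\vartheta(g(\xi))]$ — whereas your phrasing collapses them into a single uniform limit to $e^{-u^2\mathrm{var}[\vartheta(g(\xi))]/2}$, which as literally stated is false for fixed $t$; the ingredients you list do give the correct two-step argument.
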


To replace $\mu_f(\xi_{n-k,n})$ with $\mu_f(\infty) := \amsmathbb{E}[\vartheta (g(\xi))]$ in Theorem~\ref{T1gen}, we need to impose another condition (compare with Condition~\ref{secondorder}).

\begin{condition}\label{second_order_2}
The following second-order type condition holds
\begin{equation}\label{cond5}\frac{f(x,t) - g(x)}{\hat a(t)} \to \tilde g(x), \quad t\to\infty,\end{equation} where $\hat a$ is some positive or negative function. Furthermore, there exists a measurable function $\hat g$ such that for all $x$ and $t\ge T$
\begin{equation}\label{cond6} |f(x,t) - g(x)| \le |\hat a(t)| \hat g(x).\end{equation}
\end{condition}

\noindent In particular, if $\tilde g$ is regularly varying, then \eqref{cond5} implies \eqref{cond6}. 

{  Notice that all candidate functions from Example~\ref{E1} satisfy Condition~\ref{second_order_2}.
%, see Section~\ref{Sectionexamples} in Supplementary Material and \cite{dehaan}. 
For instance, for the candidate function $a^{-1}_F(t) U_F(t)(\log U_F(tx) - \log U_F(t))$ it reads as follows:
\begin{equation}\label{secondorderlog}\lim_{t\to\infty}\frac{\frac{\log U_F(tx) - \log U_F(t)}{a_F(t)/U_F(t)} - h_{\gamma_-}(x)}{Q(t)} = \int_{1}^x s^{\gamma_--1}\int_{1}^s u^{\rho^\prime-1} \dd u \dd s=: H_{\gamma_-, \rho^\prime}(x),\end{equation}
for $ x>0$, some $\rho^\prime\le 0$ and $Q(t)$ positive or negative with $Q(t) \to 0, t \to \infty,$ see, e.g., formula (3.5.11) in~\cite{dehaan}.  Here, the function $H_{\gamma, \rho}$ can be re-expressed as
\begin{equation}\label{hgammarho}H_{\gamma, \rho}(x) = \frac{1}{\rho}\left(\frac{x^{\gamma+\rho}-1}{\gamma+\rho} - \frac{x^\gamma - 1}{\gamma}\right),\end{equation} which for the cases $\gamma = 0$ and $\rho = 0$ is understood to be equal to the limit of \eqref{hgammarho} as $\gamma\to 0$ or $\rho \to 0,$ respectively. 
%Later we use \eqref{secondorderlog} in Section~\ref{sec:evi_est}.}

\begin{theorem}\label{T2gen} Let the conditions of Theorem~\ref{T1gen} and Condition~\ref{second_order_2} hold. Assume { $\sqrt{k}\hat a\big((1 - F(X_{n-k,n}))^{-1}\big)\stackrel{P}{\to}\lambda$} with finite $\lambda,$ and $\vartheta^\prime$ is continuous and eventually monotone.
Assume also
\begin{equation}\label{cond8}\amsmathbb{E}\Big[\hat g(\xi) \max \big\{|\vartheta^\prime(g_1(\xi))|, |\vartheta^\prime(g_2(\xi))|\big\}\Big]<\infty.\end{equation} 
Then
\begin{equation}\label{T2genassertion} \sqrt{k}\big(R^f_{k,n} - \amsmathbb{E}[\vartheta (g(\xi))]\big) \stackrel{d}{\to} N(C\,\lambda, {\rm var} [\vartheta (g(\xi))]\big),
\end{equation}
where $C = \amsmathbb{E}\big[\tilde g(\xi) \vartheta^\prime(g(\xi))\big].$
\end{theorem}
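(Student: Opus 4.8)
The plan is to combine Theorem \ref{T1gen} with a first-order Taylor expansion of $\vartheta$ around $g(\xi)$, turning the second-order condition \eqref{cond5} into the bias term $C\lambda$. Concretely, I would start from Theorem \ref{T1gen}, which already gives
\[
\sqrt{k}\big(R^f_{k,n} - \mu_f(\xi_{n-k,n})\big) \stackrel{d}{\to} N\big(0, {\rm var}[\vartheta(g(\xi))]\big),
\]
so by Slutsky it suffices to show that $\sqrt{k}\big(\mu_f(\xi_{n-k,n}) - \amsmathbb{E}[\vartheta(g(\xi))]\big) \stackrel{P}{\to} C\lambda$. Writing $\mu_f(t) - \amsmathbb{E}[\vartheta(g(\xi))] = \amsmathbb{E}\big[\vartheta(f(\xi,t)) - \vartheta(g(\xi))\big]$ and applying the mean value theorem pointwise, $\vartheta(f(\xi,t)) - \vartheta(g(\xi)) = \vartheta^\prime(\zeta(\xi,t))\,(f(\xi,t)-g(\xi))$ for some $\zeta(\xi,t)$ between $f(\xi,t)$ and $g(\xi)$, hence between $g_1(\xi)$ and $g_2(\xi)$ for $t\ge T$ by \eqref{cond2}. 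Dividing and multiplying by $\hat a(t)$,
\[
\frac{\mu_f(t) - \amsmathbb{E}[\vartheta(g(\xi))]}{\hat a(t)} = \amsmathbb{E}\Big[\vartheta^\prime(\zeta(\xi,t))\,\frac{f(\xi,t)-g(\xi)}{\hat a(t)}\Big].
\]
The integrand converges pointwise to $\vartheta^\prime(g(\xi))\tilde g(\xi)$ by \eqref{cond5}, continuity of $\vartheta^\prime$, and the fact that $\zeta(\xi,t)\to g(\xi)$ (squeezed between $f(\xi,t)\to g(\xi)$ and $g(\xi)$); it is dominated by $\max\{\bar{\vartheta^\prime}(g_1(\xi)),\bar{\vartheta^\prime}(g_2(\xi))\}\,\hat g(\xi)$ using \eqref{cond6}, which is integrable by \eqref{cond8}. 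Dominated convergence then gives $\hat a(t)^{-1}\big(\mu_f(t) - \amsmathbb{E}[\vartheta(g(\xi))]\big) \to C$ with $C = \amsmathbb{E}[\tilde g(\xi)\vartheta^\prime(g(\xi))]$.

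Next I would transfer this deterministic limit to the random argument $\xi_{n-k,n}$. Since $\xi_{n-k,n}\stackrel{P}{\to}\infty$ (as $k/n\to 0$), and writing $\psi(t) := \hat a(t)^{-1}\big(\mu_f(t) - \amsmathbb{E}[\vartheta(g(\xi))]\big) \to C$, we have $\psi(\xi_{n-k,n})\stackrel{P}{\to} C$ by the continuous-mapping/composition argument for convergence in probability along a sequence tending to infinity. Therefore
\[
\sqrt{k}\big(\mu_f(\xi_{n-k,n}) - \amsmathbb{E}[\vartheta(g(\xi))]\big) = \sqrt{k}\,\hat a(\xi_{n-k,n})\,\psi(\xi_{n-k,n}).
\]
Now I invoke the hypothesis $\sqrt{k}\,\hat a\big((1-F(X_{n-k,n}))^{-1}\big)\stackrel{P}{\to}\lambda$ together with the identity $\xi_{n-k,n} \stackrel{d}{=} (1-F(X_{n-k,n}))^{-1}$ coming from the Pareto coupling $X_i = U_F(\xi_i)$ (so that $\xi_i = (1-F(X_i))^{-1}$ when $F$ is continuous; in general one uses the probability integral transform carefully, but the paper's setup via $U_F$ makes $1-F(X_{n-k,n}) = 1/\xi_{n-k,n}$ the natural reading). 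Hence $\sqrt{k}\,\hat a(\xi_{n-k,n})\stackrel{P}{\to}\lambda$, and by Slutsky the product converges in probability to $C\lambda$. Combining with the CLT from Theorem \ref{T1gen} via Slutsky's theorem yields \eqref{T2genassertion}.

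The main obstacle I expect is the justification of the dominated convergence step — specifically, controlling the mean value point $\zeta(\xi,t)$ uniformly enough that the envelope bound $\max\{\bar{\vartheta^\prime}(g_1(\xi)),\bar{\vartheta^\prime}(g_2(\xi))\}$ genuinely dominates $\vartheta^\prime(\zeta(\xi,t))$. This requires that $\bar{\vartheta^\prime}$, being an envelope (continuous, eventually monotone, dominating $|\vartheta^\prime|$), satisfies $|\vartheta^\prime(s)| \le \max\{\bar{\vartheta^\prime}(g_1(\xi)),\bar{\vartheta^\prime}(g_2(\xi))\}$ for all $s\in[g_1(\xi),g_2(\xi)]$ — which holds by eventual monotonicity of $\bar{\vartheta^\prime}$ once $g_1(\xi)$ is large, but needs a small separate argument on the compact initial range where monotonicity may fail (there one bounds by a constant, harmlessly). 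A secondary technical point is the case $\lambda\neq 0$, where one wants $\vartheta^\prime$ continuous (assumed) so that the pointwise limit of the integrand is exactly $\tilde g(\xi)\vartheta^\prime(g(\xi))$; when $f(x,t)$ or its envelopes blow up as $x\to x_0$, the convention in Definition \ref{D5} that $g_1,g_2$ blow up alongside keeps the envelope bound meaningful, and one should check \eqref{cond8} indeed forces integrability near that singularity. These are routine once set up, but writing them cleanly is where the real work lies.
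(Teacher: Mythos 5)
Your proposal matches the paper's proof essentially step for step: reduce via Theorem \ref{T1gen} and Slutsky to showing $\sqrt{k}\big(\mu_f(\xi_{n-k,n}) - \mu_f(\infty)\big)\stackrel{P}{\to}C\lambda$, apply the mean value theorem pointwise, squeeze the intermediate point between $g_1(\xi)$ and $g_2(\xi)$ via \eqref{cond2} to invoke the envelope bound (with a harmless constant absorbing the compact initial range where eventual monotonicity may fail), then use \eqref{cond6} and \eqref{cond8} to dominate and \eqref{cond5} for pointwise convergence so that dominated convergence yields $\hat a(t)^{-1}\big(\mu_f(t)-\mu_f(\infty)\big)\to C$, finally passing to the random argument $\xi_{n-k,n}=(1-F(X_{n-k,n}))^{-1}$. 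The only cosmetic difference is that the paper dispatches the $\lambda=0$ case by a direct bound before invoking dominated convergence only for $\lambda\neq0$, whereas you treat both uniformly; this is an organizational choice, not a substantive one.
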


 {Applications of Theorem~\ref{T2gen} for all max-domains of attraction can be found in Section~\ref{Sectionexamples} of the Supplementary Material.}

{Next, we come back to the censored setting and link the theory of generalized residual estimators to generalized EKM integrals for all max-domains of attraction in Section~\ref{sec:exten}. Thereafter, in Section~\ref{sec:evi_est}, the combination of methods developed in Section~\ref{sec:ekmi} and Section~\ref{sec:exten} yields as a larger example the moment estimator  of $\gamma\in\amsmathbb{R}$ itself.}

\section{{ Generalized extreme Kaplan--Meier integrals in all max-domains of attraction}} \label{sec:exten}
 {Extreme Kaplan--Meier integrals extend residual estimators to the censored case, and consequently possess a degenerate limit behavior if $F$ is not heavy-tailed. To overcome this difficulty we adopt ideas from Section~\ref{genresidual}, with the aim of extending the results of Section~\ref{sec:ekmi} to other max-domains of attraction. Thus, this section studies the resulting generalized extreme Kaplan--Meier integrals.} 

Let us use the same notation as in Section~\ref{sec:ekmi}. Crucially, unlike in Section~\ref{sec:ekmi}, we do not assume that $F,$ $G$ and $H$ are regularly varying distribution functions. Let us assume that $F,$ $G$ and therefore $H$ are continuous and consider a sequence $\xi, \xi_1, \xi_2, \ldots$ of independent Pareto$(1)$ random variables such that
$Z_i = U_H(\xi_i),$ $i\ge 1,$ and a function $f(x,t)$ satisfying Definition \ref{D5}. Since we are only interested in the tails, it suffices to require eventual continuity of $F$ and $G$ as in Section~\ref{sec:ekmi}. Let $\vartheta: [0,\infty) \to \amsmathbb{R}$ be a  continuous and eventually monotone function. %as in Section~\ref{genresidual}.
The following { \textit{generalized extreme Kaplan--Meier integral}} is of our main interest:
\begin{equation}\label{new_ekmi} S_{k,n}^f(\vartheta) = \int \vartheta(f(x, \xi_{n-k,n})) \amsmathbb{F}^\xi_{k,n}(\dd x), \end{equation}
where $\amsmathbb{F}^\xi_{k,n}$ is the extreme Kaplan--Meier estimator built by the sequence $\{\xi_i\},$ i.e.
\begin{equation}\label{fxikn}\amsmathbb{F}_{k,n}^\xi(x) = 1 -\prod_{i=1}^k \left[1 - \frac{\delta_{[n-i+1:n]}}{i}\right]^{I(\xi_{n-i+1,n}/\xi_{n-k,n}\le x)}.\end{equation} 
 {In the special case of $f(x,t) = U_H(xt)/U_H(t)$ (the first candidate function in Example~\ref{E1}) the integral \eqref{new_ekmi} reduces to the standard EKM integral \eqref{extremeKM}.}

 {The generalized EKM integral} $S^f_{k,n}(\vartheta)$ can be thought as an extreme Kaplan--Meier integral \eqref{extremeKM} for the function $\vartheta^{\xi_{n-k,n}}_f(x) =\vartheta(f(x,\xi_{n-k,n}))$ built through the sequence $\{\xi_i\}.$ The only, but substantial, difference is that the function $\vartheta^{\xi_{n-k,n}}_f,$ the analogue of $\varphi$ in \eqref{extremeKM}, is random. 
 {Crucially, generalized EKM integrals \eqref{new_ekmi} can be obtained from generalized residual estimators \eqref{gre} studied in Section~\ref{genresidual} by replacing $F_{k,n}^\xi$ with the EKM estimator $\amsmathbb{F}_{k,n}^\xi$.} {  %In terms of computation, 
Note that an explicit formula for the generalized  EKM integral \eqref{new_ekmi} is
\[S_{k,n}^f(\vartheta) = \sum_{i=1}^k \omega_{ik} \; \vartheta\left(f\Big(\frac{\xi_{n-i+1,n}}{\xi_{n-k,n}}, \xi_{n-k,n}\Big)\right),\] where $\omega_{ik},\, 1\le i\le k,$ are defined by \eqref{nik}. However, $S_{k,n}^f(\vartheta)$ depends on the unknown $\{\xi_i\}$ which makes it impossible in practice to compute it for candidate functions like the second and third ones of Example~\ref{E1}, which depend on an unknown auxiliary function $a.$ }{  Fortunately, for the latter candidate functions, combining two or more generalized EKM integrals yields the resulting quantity independent of the auxiliary function, see, e.g., Section~\ref{sec:evi_est}. Such combinations include $(S_{k,n}^f(x))^\ell/S_{k,n}^f(x^\ell)$ for $\ell\in \amsmathbb{R}$ and certain polynomials of $S_{k,n}^f((\log x)^m),$ $m\in \amsmathbb{N},$ such as $S_{k,n}^f(\log^2 x) - (S_{k,n}^f(\log x))^2.$}

 {As mentioned, we are interested in extending the results of Section~\ref{sec:ekmi} to all max-domains of attraction. In this context, the inverse function of $U_H,$ i.e. $U_H^{-1}(x) = 1/(1 - H(x)),$ plays an important role in standardizing the distributions across max-domains of attraction. Indeed, we always have that a.s. $\xi_i = U_H^{-1}(Z_i)=\min(U_H^{-1}(X_i), U_H^{-1}(Y_i))$ is standard Pareto. Together with the unchanged  indicators $\delta_i$, the latter construction is a unique way of obtaining analogous versions of  $X_i$ and $Y_i$ from Section~\ref{sec:ekmi}. Specifically, $U_H^{-1}(X_i)$ and $U_H^{-1}(Y_i)$ are now the component random variables
with distributions given by
}
%For formulating results of the latter section and proving them, cumulative distributions functions $F$ and $G$ of random samples $\{X_i\}$ and $\{Y_i\}$ play a crucial role, so we need to find their analogs in the setting of the current section. Below $U_H^{-1}$ denotes the inverse function of $U_H,$ i.e. $U_H^{-1}(x) = 1/(1 - H(x)).$ Since we use $\{\xi_i\} = \{U_H^{-1}(Z_i)\}$ as $\{Z_i\}$ in \eqref{new_ekmi} and $\{\delta_i\}$ remain the same as in Section~\ref{sec:ekmi}, the analogs of $\{X_i\}$ and $\{Y_i\}$ are $\{U_H^{-1}(X_i)\}$ and $\{U_H^{-1}(Y_i)\}.$ Indeed, $\xi_i = U_H^{-1}(Z_i) = \min(U_H^{-1}(X_i), U_H^{-1}(Y_i))$ and $\delta_i = I(X_i \le Y_i) = I(U_H^{-1}(X_i) \le U_H^{-1}(Y_i)),$ $i=1,\ldots,n.$ Denote cdfs of $\{U_H^{-1}(X_i)\}$ and $\{U_H^{-1}(Y_i)\}$ by $\tilde F$ and $\tilde G,$ respectively. Clearly,
\[\tilde F(x) = F(U_H(x)), \quad \tilde G(x) = G(U_H(x)), \quad x\ge 1,\] and, evidently, $\tilde H(x) = H(U_H(x)) = 1 - 1/x,$ $x\ge 1,$  {the common cdf of $\{\xi_i\}.$}  {From this point forward, the methods resemble those of Section~\ref{sec:ekmi}, now using that $S_{k,n}^f(\vartheta)$ is an EKM integral for the function $\vartheta_f^{\xi_{n-k,n}}$ and having $\{(\xi_i, \delta_i)\}$ as the ``observed'' vectors.} 

Let us define $\tilde H^{i,t},$ $i=0,1,$ and $\tilde \gamma^t_j,$ $j=0,1,2,$ similarly to the Section~\ref{sec:ekmi} by replacing $F^t,$ $G^t,$ $H^t,$ and $\varphi$ with $\tilde F^{t},$ $\tilde G^t,$ $\tilde H^t,$ and $\vartheta^t_f,$ respectively. In the same fashion, define the random variable
\begin{equation}\label{tildew}\tilde W^\circ(\vartheta) = \vartheta(g(\xi))\tilde \gamma_0^\circ(\xi)\delta^\circ + \tilde\gamma_1^\circ(\xi)(1 - \delta^\circ) - \tilde\gamma_2^\circ(\xi).\end{equation} Note that the random variables $\{\delta^t_i\}$ and $\delta^\circ$ remain unchanged from Section~\ref{sec:ekmi}.

\begin{condition}\label{C6} Assume $1-\tilde F$ is a regularly varying function with index $-\alpha_F,$ with $\alpha_F \ge 0.$ Assume also $\tilde F$ is differentiable and $\tilde F^\prime$ is eventually monotone. If $\alpha_F = 0$ then assume additionally that $\tilde F^\prime$ is regularly varying.\footnote{{ If $\vartheta$ is monotone and $f(x,t)$ is monotone in the first argument, then the conditions on eventual monotonicity of $\tilde F^\prime$ and even existence of $\tilde F^\prime$ can be omitted; see e.g. the proof of Theorem~\ref{Tmomentconsistency}.}} \end{condition}

Note that Condition~\ref{C6} implies that $1 - \tilde G$ is also a regularly varying function because $1 - \tilde G = (1 - \tilde H)/(1 - \tilde F)$ and $1 - \tilde H(x) = 1/x,\, x \ge 1.$ Denote $\alpha_G,$ the index of regular variation of $1 - \tilde G.$ Note that $\alpha_F$ and $\alpha_G$ belong to the interval $[0,1],$ moreover, $\alpha_F + \alpha_G = 1$ by the definition of $F,$ $G$ and $H$ and the properties of regularly varying functions. 
%Given $F$ and $G$ are eventually continuous, it is not difficult to show that $\tilde F$ and $\tilde G$ are eventually continuous as well.

The next result shows that the first part of Condition~\ref{C6} is rather easily satisfied. In particular, it holds true if $F$ and $G$ belong to the same max-domain of attraction. Moreover, some somewhat converse result can be stated, which is useful for the proofs of the results from the next section.
\begin{proposition}\label{P2} \;
\begin{enumerate} \item[(i)] Assume $F$ and $G$ belong to the same max-domain of attraction. Then $1-\tilde F$ is a regularly varying function with index $-\alpha_F \le 0.$ Moreover, if $\gamma_F\neq 0$ then $\alpha_F = \gamma_H/\gamma_F.$
\item[(ii)]
Assume $F$ and $G$ belong to two possibly different max-domains of attraction, $1 - \tilde F$ is a regularly varying function with index $-\alpha_F,$ and $\alpha_F>0.$ Then $F$ and $H$ belong to the same max-domain of attraction and, moreover, $\tau_F = \tau_H.$
\end{enumerate}
\end{proposition}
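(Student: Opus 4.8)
The plan is to work throughout with the quantile-transform identity $1-\tilde F(x)=1-F(U_H(x))$, $x\ge 1$, so that $1-\tilde F=(1-F)\circ U_H$ is a composition whose regular variation can be read off from that of $1-F$ and of $U_H$, and to pass such statements back and forth between tails and quantile functions via the usual inverse relations (here the eventual continuity of $F$, $G$, $H$ guarantees that the generalized-inverse inequalities become equalities in the relevant ranges).

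\emph{Part (i).} Since $F$ and $G$ share a max-domain of attraction, so does $H$: this follows from $1-H=(1-F)(1-G)$, using $1/\gamma_H=1/\gamma_F+1/\gamma_G$ in the Fr\'echet case, $\tau_H=\min(\tau_F,\tau_G)$ in the Weibull case, and closure of $\mathcal D(G_0)$ under tail-equivalent products in the Gumbel case. I would then split into the three domains. If $\gamma_F>0$: $1-F$ is regularly varying of index $-1/\gamma_F$ and $U_H$ of index $\gamma_H$, so $1-\tilde F$ is regularly varying of index $-\gamma_H/\gamma_F$; hence $\alpha_F=\gamma_H/\gamma_F\in(0,1)$. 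If $\gamma_F<0$: in the identifiable regime $\tau_F\le\tau_G$ (so $\tau_F=\tau_H$), compose $v\mapsto 1-F(\tau_F-v)$, regularly varying at $0^+$ of index $-1/\gamma_F>0$, with $x\mapsto\tau_H-U_H(x)$, regularly varying at $\infty$ of index $\gamma_H<0$ and vanishing; this gives $1-\tilde F$ regularly varying of index $-\gamma_H/\gamma_F$, so again $\alpha_F=\gamma_H/\gamma_F\in(0,1]$. The Gumbel case $\gamma_F=\gamma_G=0$ is the one I expect to require real work: with $1-F=e^{-\phi_F}$, $1-G=e^{-\phi_G}$, $1-H=e^{-\phi_H}$ and $\phi_H=\phi_F+\phi_G$, one has $1-\tilde F(x)=\exp\{-\phi_F(U_H(x))\}$ and $\log x=\phi_H(U_H(x))$, so $1-\tilde F$ is regularly varying precisely when $\phi_F/\phi_H$ admits a limit $\alpha_F\in[0,1]$ along $U_H(x)\to\infty$ --- which one obtains from an additional mild comparability hypothesis on the tail rates of $F$ and $G$ (and automatically when $F=G$, with $\alpha_F=1/2$).

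\emph{Part (ii).} First, $\tau_F=\tau_H$: as $1-\tilde F$ is regularly varying of the strictly negative index $-\alpha_F$ it tends to $0$, so $F(U_H(x))\to 1$, and since $U_H(x)\uparrow\tau_H$ this forces $\tau_F\le\tau_H$, while $1-H\le 1-F$ gives $\tau_H\le\tau_F$. Next I transfer regular variation to the quantile functions: $1-\tilde F$ regularly varying of index $-\alpha_F$ means $\tilde F\in\mathcal D(G_{1/\alpha_F})$, hence $U_{\tilde F}$ is regularly varying of index $1/\alpha_F$; and the quantile-transform bookkeeping identifies $U_{\tilde F}(t)=1/(1-H(U_F(t)))$ for large $t$, i.e. $U_F=U_H\circ c$ with $c:=U_{\tilde F}$ regularly varying of index $1/\alpha_F$. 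Since $1-\tilde G=(1-\tilde H)/(1-\tilde F)$ is regularly varying of index $\alpha_F-1$ and bounded, $\alpha_F\le 1$; so $c$ and its inverse $c^{-1}$ (regularly varying of index $\alpha_F\in(0,1]$) both diverge. Finally, using the hypothesis $F\in\mathcal D(G_{\gamma_F})$ and $U_H=U_F\circ c^{-1}$: if $\gamma_F>0$ then $U_H$ is regularly varying of index $\gamma_F\alpha_F$; if $\gamma_F<0$ then $U_H(\infty)=U_F(\infty)=\tau_F$ and $\tau_F-U_H=(\tau_F-U_F)\circ c^{-1}$ is regularly varying of index $\gamma_F\alpha_F$; if $\gamma_F=0$ then, by the locally uniform $\Pi$-variation of $U_F$ together with $c^{-1}(sx)/c^{-1}(s)\to x^{\alpha_F}$, one gets $(U_H(sx)-U_H(s))/a_F(c^{-1}(s))\to\alpha_F\log x$, so $U_H$ is $\Pi$-varying. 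In every case $H$ lies in the same max-domain as $F$, with $\gamma_H=\gamma_F\alpha_F$, which also recovers $\alpha_F=\gamma_H/\gamma_F$ when $\gamma_F\neq0$.

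\emph{Expected main obstacle.} The genuinely delicate points are the Gumbel situations: in part (i), regular variation of $1-\tilde F$ is not implied by $F,G\in\mathcal D(G_0)$ alone and needs the auxiliary (tail-rate) functions of $F$ and $G$ to be asymptotically comparable; in part (ii), composition does not transport $\Pi$-variation as effortlessly as it transports pure regular variation, so the uniform convergence theorem for $\Pi$-varying functions must be invoked, together with careful treatment of generalized inverses and atoms in the quantile transform --- which is precisely the role of the eventual-continuity assumption.
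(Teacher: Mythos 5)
Your approach coincides in essence with the paper's: both read $1-\tilde F=(1-F)\circ U_H$ and compose regularly varying (resp.\ $\Pi$-varying) maps, passing between tails and quantile functions. In part (ii) you route through $c:=U_{\tilde F}$ and its generalized inverse where the paper substitutes directly via $U_H(x)=U_F\big(x^{\alpha_F}/\ell_{\tilde F}(x)\big)$, but these are the same quantile-transfer idea; your explicit $\Pi$-variation step for $\gamma_F=0$ is if anything cleaner than the paper's ``proceed similarly'' remark. One point you should add in part (i): you omit the degenerate Weibull subcase $\gamma_F<0$ with $\tau_F>\tau_G$, where $\tau_H=\tau_G<\tau_F$ and $1-\tilde F(y)\to 1-F(\tau_G)>0$, so $1-\tilde F$ is slowly varying and $\alpha_F=0$; since the statement admits $\alpha_F=0$ and the paper treats this case, it needs covering.

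More substantively, your caution about the Gumbel case is well placed and in fact exposes a gap in the paper's own proof of part (i): with $\gamma_F=\gamma_G=0$ the paper does not establish regular variation of $1-\tilde F$, it only notes that $\alpha_F$ may take various values. The claim is false without extra hypotheses. For instance, take $1-F=e^{-\phi_F}$, $1-G=e^{-\phi_G}$ with $\phi_F(x)=x(2+\sin\log x)$ and $\phi_G(x)=x(2-\sin\log x)$; both satisfy the von Mises condition, so $F,G\in\mathcal D(G_0)$, yet $\phi_H(x)=4x$, $U_H(y)=\tfrac14\log y$, and $\log(1-\tilde F(y))/\log y = -\tfrac14\big(2+\sin\log(\tfrac14\log y)\big)$ oscillates, so $1-\tilde F$ is not regularly varying of any index. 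Exactly as you anticipate, a comparability hypothesis on the auxiliary (tail-rate) functions of $F$ and $G$ is needed; in the paper this missing assumption is effectively re-imposed later through Condition \ref{C6}, so Proposition \ref{P2}(i) cannot be taken at face value in the Gumbel case.
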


Finally, we introduce a second moment condition  {which combines} %similar to
Condition~\ref{envelope_moment} and \eqref{cond4}. Recall that %$\bar \vartheta$ denotes an envelope of $\vartheta,$ and 
$g_1, g_2$ are defined by Definition \ref{D5}. 

\begin{condition}\label{varthetaonlycondition}
There exists $\varepsilon>0$ such that
\begin{equation*} \int_{1}^\infty \big(\max(| \vartheta(g_1(x))|, |\vartheta(g_2(x))|)\big)^{2} x^{\tilde\alpha(\varepsilon)} \dd x < \infty,\end{equation*}
where $\tilde\alpha(\varepsilon) = \alpha_G - \alpha_F -1 +\varepsilon.$
\end{condition}

Denote 
\[S_\circ^f(\vartheta) := \int_1^\infty \vartheta(g(y)) \tilde F^\circ(\dd y),\] where similarly to Section~\ref{sec:ekmi}, $\tilde F^\circ(x)\!=\!(1\!-\!x^{-\alpha_F})I(x\!\ge\!1).$ If $\alpha_F\!=\!0$ then $\tilde F^\circ(x)$ should be understood as $0$ for all $x\ge 1,$ and thus $S_\circ^f(\vartheta)=\lim_{t\to\infty}\vartheta(g(t))$ if this limit exists. %Clearly, the latter case is not useful.

\begin{theorem} \label{T1genekmi}
Assume Condition~\ref{C6} and Condition~\ref{varthetaonlycondition}, that $F$ and $G$ are eventually continuous, that $\vartheta$ is continuous and eventually monotone, and let $k=k_n$ be an intermediate sequence. Then $S^f_{k,n}(\vartheta)$ is consistent for $S_\circ^f(\vartheta)$ and
\begin{equation}\label{T1genekmiassertion} \sqrt{k}\big(S^f_{k,n}(\vartheta) - \mu_f(U_H^{-1}(Z_{n-k,n}))\big) \stackrel{d}{\to} N\big(0, {\rm var} [\tilde W^\circ(\vartheta)]\big),\end{equation}
 where 
\[\mu_f(t) =  \int_1^\infty \vartheta(f(y,t)) \tilde F^t(\dd y).\]
\end{theorem}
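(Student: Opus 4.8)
\textbf{Proof plan for Theorem \ref{T1genekmi}.}
The plan is to recognize $S^f_{k,n}(\vartheta)$ as a \emph{standard} extreme Kaplan--Meier integral of Section \ref{sec:ekmi}, but with two modifications: first, the underlying ``observed'' vectors are $\{(\xi_i,\delta_i)\}$ rather than $\{(Z_i,\delta_i)\}$, where $\xi_i=U_H^{-1}(Z_i)$ is exactly Pareto$(1)$ and the cdf of $\xi_i$ conditioned on its lying above a threshold converges to the Pareto law with index $\alpha_F$ by Condition \ref{C6}; second, the integrand $\vartheta_f^{\,\xi_{n-k,n}}(x)=\vartheta(f(x,\xi_{n-k,n}))$ is \emph{random} (it depends on $\xi_{n-k,n}$). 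The strategy is thus to run the Section \ref{sec:ekmi} machinery conditionally on $\xi_{n-k,n}=t$, where the integrand becomes the deterministic function $\vartheta_f^t$, check that Condition \ref{envelope_moment} holds for $\vartheta_f^t$ uniformly in large $t$ (this is precisely where Condition \ref{varthetaonlycondition} enters, via $|\vartheta_f^t(x)|\le \max(\bar\vartheta(g_1(x)),\bar\vartheta(g_2(x)))$ from \eqref{cond2} together with $\alpha_F+\alpha_G=1$ matching the roles of $1/\gamma_F,1/\gamma_G$), and then de-condition.

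First I would fix $t$ and apply Theorem \ref{T4} to the EKM integral built from $\{(\xi_i,\delta_i)\}$ with integrand $\varphi=\vartheta_f^t$: this gives the decomposition of $S^f_{k,n}(\vartheta)$ given $\xi_{n-k,n}=t$ into the three sums involving $\tilde\gamma_0^t,\tilde\gamma_1^t,\tilde\gamma_2^t$ plus a remainder $r_{k,n}$ with $\limsup_{t\to\infty}\pr(|k^{1/2}r_{k,n}|>\epsilon\mid\xi_{n-k,n}=t)\to 0$. Next, by the same argument that passes from Theorem \ref{T4} to Theorem \ref{Tconsistency} and Theorem \ref{T1an} (applied to the standardized variables), the leading sums are averages of conditionally i.i.d. terms whose common distribution converges, as $t\to\infty$, to that of $\tilde W^\circ(\vartheta)$ defined in \eqref{tildew}; a conditional CLT then yields, given $\xi_{n-k,n}=t$,
\[
\sqrt{k}\big(S^f_{k,n}(\vartheta)-\mu_f(t)\big)\ \xrightarrow{d}\ N\big(0,\mathrm{var}[\tilde W^\circ(\vartheta)]\big),
\]
with $\mu_f(t)=\int_1^\infty \vartheta(f(y,t))\tilde F^t(\dd y)=\int\vartheta_f^t\,\dd \tilde F^t$ playing the role of $\int\varphi\,\dd F^t$, and consistency following analogously since $\int\vartheta_f^t\,\dd\tilde F^t\to\int\vartheta(g(y))\tilde F^\circ(\dd y)=S_\circ^f(\vartheta)$ by \eqref{cond1}, \eqref{cond2} and dominated convergence. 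Finally I would de-condition: because $\xi_{n-k,n}\xrightarrow{P}\infty$ (as $k_n$ is intermediate) and the conditional limit above is uniform in large $t$, a standard conditioning argument (integrating the conditional characteristic functions against the law of $\xi_{n-k,n}$, exactly as in the proofs of Theorems \ref{T1an}--\ref{T3an}) removes the conditioning and gives \eqref{T1genekmiassertion} with centering $\mu_f(U_H^{-1}(Z_{n-k,n}))=\mu_f(\xi_{n-k,n})$.

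The main obstacle I expect is the uniform-in-$t$ control needed at two places: (a) verifying Condition \ref{envelope_moment} for the family $\{\vartheta_f^t\}_{t\ge T}$ with a single envelope, which requires the two-sided bound \eqref{cond2} to be compatible with the regular-variation index $\alpha_F$ of $1-\tilde F$ (here one uses Potter-type bounds on $1-\tilde F^t$ and $1-\tilde G^t$, legitimate by Condition \ref{C6}, to absorb the $\varepsilon$ in $\tilde\alpha(\varepsilon)$), and (b) ensuring that the convergence in Theorem \ref{T4} and in the conditional CLT is genuinely uniform over $t$ large enough so that de-conditioning is valid despite $\mu_f(t)$ itself varying with $t$. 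The degenerate case $\alpha_F=0$ is excluded as noted, and the case $\alpha_F>0$ is exactly where Proposition \ref{P2}(ii) guarantees $\tilde F$ sits in a genuine Fréchet-type regime for the standardized problem, so that the Section \ref{sec:ekmi} results transfer verbatim. Once these uniformities are in place, the identification of the limiting variance as $\mathrm{var}[\tilde W^\circ(\vartheta)]$ is immediate from the definitions of $\tilde\gamma_j^\circ$ and \eqref{tildew}, mirroring \eqref{wcirc} and the variance formula in Theorem \ref{T1an}.
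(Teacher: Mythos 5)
Your proposal follows essentially the same route as the paper's proof: view $S^f_{k,n}(\vartheta)$ as an EKM integral in the standardized variables $\xi_i=U_H^{-1}(Z_i)$ with the $t$-indexed integrand $\vartheta_f^t$, condition on $\xi_{n-k,n}=t$, re-run the Section \ref{sec:ekmi} machinery uniformly in large $t$ with the moment conditions delivered by Condition \ref{varthetaonlycondition}, de-condition via conditional characteristic functions, and identify the limit variance as ${\rm var}[\tilde W^\circ(\vartheta)]$. This matches the paper's argument, including the role of Lemma \ref{L1} for $\{(\xi^\ast_i,\delta^t_i)\}$ and the convergence $\mu_f(t)\to S^f_\circ(\vartheta)$ by dominated convergence.

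Two points of refinement. First, the theorem does \emph{not} exclude $\alpha_F=0$; only Theorem \ref{T2genekmi} does. When $\alpha_F=0$ (or $\alpha_G=0$), $1-\tilde F$ (resp.\ $1-\tilde G$) is slowly varying, so $\tilde F,\tilde G$ are not ``regularly varying cdfs'' in the sense of Theorem \ref{T4}, and one cannot invoke Theorems \ref{T4}, \ref{Tconsistency}, \ref{T1an} as black boxes; the paper instead re-runs their proofs in the tilde-variables. Second, the dominated-convergence bound analogous to \eqref{P0} is not obtained by Potter's bounds on $U_{F^t}$ combined with eventual monotonicity of $\bar\varphi$ (that route would require monotonicity and continuity of $f,g_1,g_2$ which are not assumed); the paper instead works with the density $(\tilde F^t)'$ directly, using the differentiability and eventual monotonicity of $\tilde F'$ imposed in Condition \ref{C6} together with the monotone density theorem (and, when $\alpha_F=0$, the additional regular-variation assumption on $\tilde F'$), and bounds the argument of $\vartheta$ via $g_1,g_2$ from Definition \ref{D5}. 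Your high-level plan is sound, but these are the nontrivial points where the proof genuinely departs from a verbatim transfer.
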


To replace $\mu_f(\xi_{n-k,n})$ with $\mu_f(\infty) := S_\circ^f(\vartheta)$ in the latter theorem, we impose a second-order-type condition as was done in Theorem~\ref{T3an} and Theorem~\ref{T2gen}. However, in contrast to those theorems, the second-order behavior of both $f$ and $\tilde F^t$ impacts the asymptotics, and consequently we need to impose two second-order conditions. First, keeping the notation of Section~\ref{genresidual}, we impose Condition~\ref{second_order_2} on $f$. Similar to Theorem~\ref{T2gen}, we need to control the behavior of $\vartheta^\prime$ motivating the following condition.

\begin{condition}\label{varthetaprime} Let $\vartheta$ have 
a continuous and eventually monotone derivative $\vartheta^\prime$ such that for some $\varepsilon^\prime>0$
\begin{equation*}
    \label{onlyconditionnew2} \int_1^\infty \hat g(x) \max\big(|\vartheta^\prime(g_1(x))|, |\vartheta^\prime(g_2(x))|\big) \,x^{-\alpha_F -1 + \varepsilon^\prime} \dd x < \infty,
\end{equation*}
where $g_1,$ $g_2,$ and $\hat g$ are defined by Definition \ref{D5}.
\end{condition}

Secondly, similarly to Theorem~\ref{T3an}, we require a second-order condition on $U_{\tilde F},$ the analogue of Condition~\ref{secondorder}. Further we focus on the {non-degenerate} case $\alpha_F>0$.  {Denote $\vartheta_g = \vartheta \circ g.$}
%The following analogue of Condition~\ref{cond:phi_deriv}, handles the behavior of the derivative of the function $\vartheta_g = \vartheta \circ g.$

%\begin{condition}\label{cond:phi_deriv_2}
%Let $\vartheta_g$ have a derivative $\vartheta_g^\prime$ such that for some $\varepsilon^\prime>0$ 
%\begin{equation*}
%     \int_1^\infty \bar{\vartheta_g^\prime}(x)\,x^{-\alpha_F + \varepsilon^\prime} \dd x < \infty.
%\end{equation*}
%\end{condition}

\begin{theorem} \label{T2genekmi} Let the conditions of Theorem~\ref{T1genekmi}, Condition~\ref{second_order_2}, and Condition~\ref{varthetaprime} hold. % and Condition~\ref{cond:phi_deriv_2} hold. 
Assume Condition~\ref{secondorder} holds for $U_{\tilde F},$ $a = a_{\tilde F}$ and $\alpha_F>0$, that { $\sqrt{k} a(n/k)\to\lambda,$ $\sqrt{k}\hat a\big(U_H^{-1}(Z_{n-k,n})\big)\stackrel{P}{\to}\hat\lambda$} with finite $\lambda,$ $\hat \lambda,$ and %$\vartheta^\prime$ and 
$\vartheta_g^\prime$ is continuous and eventually monotone. %are %almost everywhere 
%continuous if $ \hat\lambda\neq0$ and $\lambda\neq0,$ respectively. 
Then
\begin{equation}\label{T2genekmiassertion} \sqrt{k}\big(S^f_{k,n}(\vartheta) - S_\circ^f(\vartheta)\big) \stackrel{d}{\to} N(\mu, {\rm var} [\tilde W^\circ(\vartheta)]\big),
\end{equation}
where \[\mu = \hat\lambda \int \tilde g(x) \vartheta^\prime(g(x)) \tilde F^\circ(\dd x) + \lambda\int x \vartheta_g^\prime(x) h_\rho(x^{\alpha_F})\tilde F^\circ(\dd x).\]
\end{theorem}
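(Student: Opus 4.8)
\emph{Proof plan.} The plan is to promote the random-centering central limit theorem of Theorem~\ref{T1genekmi} to the deterministic centering $S_\circ^f(\vartheta)$ by means of a deterministic asymptotic expansion of $\mu_f$ evaluated at the random threshold $\xi_{n-k,n}=U_H^{-1}(Z_{n-k,n})$. Since Theorem~\ref{T1genekmi} gives $\sqrt{k}\big(S^f_{k,n}(\vartheta)-\mu_f(\xi_{n-k,n})\big)\stackrel{d}{\to} N\big(0,{\rm var}[\tilde W^\circ(\vartheta)]\big)$, Slutsky's theorem reduces the claim to
\begin{equation}\label{reductionplan}
\sqrt{k}\big(\mu_f(\xi_{n-k,n})-S_\circ^f(\vartheta)\big)\stackrel{P}{\to}\mu.
\end{equation}
Using $\mu_f(t)=\int_1^\infty\vartheta(f(y,t))\,\tilde F^t(\dd y)$ and $S_\circ^f(\vartheta)=\int_1^\infty\vartheta_g(y)\,\tilde F^\circ(\dd y)$, I would split, for each fixed $t$,
\[
\mu_f(t)-S_\circ^f(\vartheta)=\underbrace{\int_1^\infty\big[\vartheta(f(y,t))-\vartheta(g(y))\big]\,\tilde F^t(\dd y)}_{=:\,J_1(t)}\;+\;\underbrace{\int_1^\infty\vartheta_g(y)\,\big(\tilde F^t-\tilde F^\circ\big)(\dd y)}_{=:\,J_2(t)},
\]
so that $J_1$ isolates the second-order fluctuation of the candidate function $f$ and $J_2$ that of the tail counterpart $\tilde F^t$; the target limits are $\sqrt{k}\,J_1(\xi_{n-k,n})\stackrel{P}{\to}\hat\lambda\int_1^\infty\tilde g(x)\vartheta^\prime(g(x))\,\tilde F^\circ(\dd x)$ and $\sqrt{k}\,J_2(\xi_{n-k,n})\stackrel{P}{\to}\lambda\int_1^\infty x\,\vartheta_g^\prime(x)\,h_\rho(x^{\alpha_F})\,\tilde F^\circ(\dd x)$, whose sum is $\mu$.

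For $J_1$, I would use the mean value theorem to write $\vartheta(f(y,t))-\vartheta(g(y))=\vartheta^\prime(\zeta_{y,t})(f(y,t)-g(y))$ with $\zeta_{y,t}$ between $f(y,t)$ and $g(y)$, both contained in $[g_1(y),g_2(y)]$ by \eqref{cond1}--\eqref{cond2}, and then divide by $\hat a(t)$. Condition~\ref{second_order_2} gives the pointwise limit $\hat a(t)^{-1}[\vartheta(f(y,t))-\vartheta(g(y))]\to\tilde g(y)\vartheta^\prime(g(y))$, while \eqref{cond6} and the monotone-envelope bound give domination by $\hat g(y)\max\{\bar{\vartheta^\prime}(g_1(y)),\bar{\vartheta^\prime}(g_2(y))\}$, which is $\tilde F^\circ$-integrable by Condition~\ref{varthetaprime}. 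Combining the weak convergence $\tilde F^t\to\tilde F^\circ$ (uniform convergence theorem for regularly varying functions) with Potter-type bounds on $1-\tilde F^t$---here the slack $\varepsilon^\prime>0$ in Condition~\ref{varthetaprime} is exactly what delivers uniform integrability of the dominating function against the moving family $\{\tilde F^t\}$---a generalized dominated convergence theorem gives $\hat a(t)^{-1}J_1(t)\to\int_1^\infty\tilde g(y)\vartheta^\prime(g(y))\,\tilde F^\circ(\dd y)$ as $t\to\infty$. Since $\xi_{n-k,n}\stackrel{P}{\to}\infty$, evaluating the deterministic function $t\mapsto\hat a(t)^{-1}J_1(t)$ at $t=\xi_{n-k,n}$ and multiplying by $\sqrt{k}\,\hat a(\xi_{n-k,n})=\sqrt{k}\,\hat a(U_H^{-1}(Z_{n-k,n}))\stackrel{P}{\to}\hat\lambda$ yields the first target limit.

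For $J_2$, I observe that $\sqrt{k}\,J_2(\xi_{n-k,n})=\sqrt{k}\int\vartheta_g\,\dd(\tilde F^{\xi_{n-k,n}}-\tilde F^\circ)$ is exactly of the type treated in Theorem~\ref{T3an}, now applied to the ``sample'' $\{(\xi_i,\delta_i)\}$ with $F,G,H,Z_{n-k,n},\gamma_F,\varphi$ replaced by $\tilde F,\tilde G,\tilde H,\xi_{n-k,n},1/\alpha_F,\vartheta_g$; note $U_{\tilde H}(s)=s$, so $\xi_{n-k,n}/(n/k)\stackrel{P}{\to}1$, and $\alpha_F>0$ places $\tilde F$ in the Fr\'echet domain with $\tilde F^\circ(x)=(1-x^{-\alpha_F})I(x\ge1)$. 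The hypotheses of Theorem~\ref{T3an} transfer: $\tilde F$ and $\tilde G$ are regularly varying (Condition~\ref{C6} and the discussion following it); $\vartheta_g$ is continuous because $\vartheta_g^\prime$ is; Condition~\ref{envelope_moment} for $\vartheta_g$ follows from Condition~\ref{varthetaonlycondition} using the envelope $\bar{\vartheta_g}(x)\le\max\{\bar\vartheta(g_1(x)),\bar\vartheta(g_2(x))\}$; Condition~\ref{cond:phi_deriv} for $\vartheta_g$ follows from Condition~\ref{varthetaprime} after expressing $\vartheta_g^\prime$ through $\vartheta^\prime$, $g^\prime$ and $\hat g$ and using that $x\,g^\prime(x)$ is controlled by $\hat g(x)$; the second-order Condition~\ref{secondorder} for $U_{\tilde F}$ together with $\sqrt{k}\,a_{\tilde F}(n/k)\to\lambda$ supply the second-order input, the quantity $c_n$ of Theorem~\ref{T3an} being transcribed into the $\{\xi_i\}$-scale via the regular-variation relations between $\tilde F$ and $\tilde H$ (Proposition~\ref{P2}); and $\vartheta_g^\prime$ is continuous, as required when $\lambda\neq0$. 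Theorem~\ref{T3an} then delivers the second target limit.

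Adding the two limits yields \eqref{reductionplan}, and a final appeal to Slutsky's theorem completes the proof. I expect the main obstacle to be term $J_2$: one must verify that the composite $\vartheta_g$ inherits the integrability hypotheses of Theorem~\ref{T3an} from Conditions~\ref{varthetaonlycondition} and~\ref{varthetaprime}, and---since the tails of $\tilde F$ and $\tilde H$ are coupled by $\alpha_F+\alpha_G=1$---to reconcile carefully the second-order rate conditions of the present statement with those of Theorem~\ref{T3an}. A secondary technical point is the uniform integrability against the moving measures $\{\tilde F^t\}$ in the dominated convergence step of $J_1$, which is precisely where the slack $\varepsilon^\prime>0$ in Condition~\ref{varthetaprime} (and its counterpart $\varepsilon>0$ in Condition~\ref{varthetaonlycondition}, entering $J_2$ via Theorem~\ref{T3an}) is used.
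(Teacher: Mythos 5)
Your proposal mirrors the paper's proof: the same reduction via Theorem~\ref{T1genekmi} and Slutsky's theorem, the same split $\mu_f(\xi_{n-k,n})-\mu_f(\infty)=J_1+J_2$, the mean-value theorem with dominated convergence for $J_1$, and the transfer of Theorem~\ref{T3an} (with $\varphi\mapsto\vartheta_g$, $\gamma_F\mapsto 1/\alpha_F$, $F^\circ\mapsto\tilde F^\circ$) for $J_2$. The only caveat is how you verify the analogue of Condition~\ref{cond:phi_deriv} for $\vartheta_g$: your claim that $x\,g'(x)$ is controlled by $\hat g(x)$ is not a consequence of Condition~\ref{second_order_2} in general (though it does hold for the standard candidate functions), whereas the paper obtains the required integrability of $\bar{\vartheta_g'}$ from Condition~\ref{varthetaonlycondition}, integration by parts, and the assumed continuity and eventual monotonicity of $\vartheta_g'$.
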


\begin{remark}
In most practical cases only one term in asymptotic bias may be positive, since the auxiliary functions $a$ and $\hat a$ generally have different asymptotic behavior. This is in line with classical results in statistics of extremes: depending on the extreme value index and the second-order parameter, the formula for the asymptotic bias changes substantially; see e.g. Theorem 3.5.4, \cite{dehaan}, and in particular formula (3.5.16) therein.
\end{remark}

\section{Extreme value index estimation}\label{sec:evi_est}

There are numerous extreme value index estimators proposed in the literature; see e.g. Section 6 in the review \cite{fedotenkov}. Residual estimators (see Section~\ref{genresidual} for a formal definition) are used in some of them. For instance, the moment estimator, \cite{moment}, is defined as follows
\begin{equation}g_{k,n} = M_{k,n}^{(1)} + 1 - \frac{1}{2}\left(1 - \frac{\big(M_{k,n}^{(1)}\big)^2}{M_{k,n}^{(2)}}\right)^{-1},\label{momentestimator}\end{equation} where
\[M_{k,n}^{(\ell)} = \frac{1}{k} \sum_{i=1}^k \big(\log X_{n-i+1,n} - \log X_{n-k,n}\big)^\ell = \frac{1}{k} \sum_{i=1}^k f_\ell(X_{n-i+1,n}/X_{n-k,n}), \:\:\: \ell =1,2,\]
are the residual estimators with $f_\ell(x) = (\log x)^\ell.$ The estimator $M_{k,n}^{(1)}$ was proposed in~\cite{hill1975simple} for positive tail indices, while $g_{k,n}$ targets all max-domains of attraction. Note that  {according to \eqref{newrkn},} $M_{k,n}^{(\ell)}$ can be rewritten as follows:
\begin{equation}\label{residualrepresentation} M_{k,n}^{(\ell)} = \int_{1}^\infty f_\ell(x) F_{k,n}(\dd x),\end{equation} where \[F_{k,n}(x) = \frac{1}{k} \sum_{i=1}^k I(X_{n-i+1,n}/X_{n-k,n} \le x)\] is the empirical distribution function of the set of the normalized largest order statistics $\{X_{n-i+1,n}/X_{n-k,n}\}_{i=1}^k.$ 

The aim of this section is to apply the ideas of previous sections for extreme value index estimation in the presence of random censorship. Specifically, we propose a version of the moment estimator \eqref{momentestimator}, where we use the extreme Kaplan--Meier estimator $\amsmathbb{F}_{k,n}$ in place of $F_{k,n}$. Thus, denote \begin{equation}\label{residualrepresentation_censored} \amsmathbb{M}_{k,n}^{(\ell)} = \int_{1}^\infty (\log x)^\ell\, \amsmathbb{F}_{k,n}(\dd x), \:\:\: \ell =1,2,\end{equation} and then define $\mathbb{g}_{k,n}$ by replacing $M_{k,n}^\ell$ with $\amsmathbb{M}_{k,n}^\ell,$ $\ell = 1,2,$ in \eqref{momentestimator}, that is
\begin{equation}\mathbb{g}_{k,n} = \amsmathbb{M}_{k,n}^{(1)} + 1 - \frac{1}{2}\left(1 - \frac{\big(\amsmathbb{M}_{k,n}^{(1)}\big)^2}{\amsmathbb{M}_{k,n}^{(2)}}\right)^{-1}.\label{momentestimator_censored}\end{equation}
 {Explicit expressions for $\amsmathbb{M}_{k,n}^{(1)}, \amsmathbb{M}_{k,n}^{(2)}$ and consequently $\mathbb{g}_{k,n}$ can be derived from \eqref{explicit}.}

In what follows we show the asymptotic properties of the resulting estimator $\mathbb{g}_{k,n}$ taking full advantage of the theory of extreme Kaplan--Meier integrals.

As in Condition~\ref{C6}, we assume that $1 - F(U_H(x))$ and therefore $1-G(U_H(x))$ are regularly varying functions with indices $-\alpha_F$ and $-\alpha_G,$ respectively. Recall that by definition of $F,$ $G$ and $H,$ $\alpha_F,\alpha_G \in [0,1]$. %Example~\ref{Ex2} 
Proposition~\ref{P2} (i) implies that this condition includes  {all three cases} considered by \cite{einmahl2008statistics}:
\begin{enumerate}
\item[Case 1.] $\gamma_F>0,$ $\gamma_G>0;$
\item[Case 2.] $\gamma_F = \gamma_G = 0$ and $\tau_F = \tau_G = \infty;$
\item[Case 3.] $\gamma_F<0,$ $\gamma_G<0$ and $\tau_F = \tau_G.$
\end{enumerate}
In~\cite{einmahl2008statistics} other possibilities are not explored due to being close to either the ``uncensored case'' (when $\gamma_F<0$ and $\gamma_G>0$) or to the ``completely censored situation'' (when $\gamma_F>0$ and $\gamma_G<0$). However, the situation $\gamma_F=\gamma_G=0$ and $\tau_F = \tau_G<\infty$ does not fit into the above cases and is interesting.  {More precisely, the following situations are not included in Cases 1-3 and possible in our setting, albeit perhaps not as common in practice:
\begin{itemize}
\item $\gamma_F \le 0, \gamma_G > 0;$
\item $\gamma_F=0,$ $\gamma_G<0$ and $\tau_F\le\tau_G;$
\item $\gamma_F<0,$ $\gamma_G\le 0$ and $\tau_F<\tau_G;$
\item $\gamma_F = \gamma_G = 0$ and $\tau_F<\tau_G\le\infty.$
\end{itemize}
}\noindent
%Consequently, we continue with our own general condition.

The following result is the direct consequence of Theorem~\ref{Tconsistency} and the first statement of Theorem~\ref{T1genekmi}.

\begin{theorem}\label{Tmomentconsistency}
Assume $F$ and $G$ are eventually continuous and belong to (possibly different) max-domains of attraction such that $1 - F(U_H(x))$ is regularly varying function with index $-\alpha_F,$ where $\alpha_F>1/2$. Additionally, assume $\tau_F>0.$ Let $k=k_n$ be an intermediate sequence. Then \begin{align*}
\mathbb{g}_{k,n}\stackrel{P}{\to}\gamma_F.
\end{align*}
\end{theorem}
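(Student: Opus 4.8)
The plan is to recognise $\amsmathbb{M}_{k,n}^{(1)}$ and $\amsmathbb{M}_{k,n}^{(2)}$ as deterministically renormalised \emph{generalized} extreme Kaplan--Meier integrals \eqref{new_ekmi}, to transfer their convergence from the consistency part of Theorem~\ref{T1genekmi} (and, when $\gamma_F>0$, alternatively from Theorem~\ref{Tconsistency}), and then to conclude for $\mathbb{g}_{k,n}$ in \eqref{momentestimator_censored} by a continuous mapping argument. Fix the coupling $Z_i=U_H(\xi_i)$ with $\{\xi_i\}$ i.i.d. Pareto$(1)$ as in Section~\ref{sec:exten}, let $a_H$ be an auxiliary function of $H$, set $b(t)=a_H(t)/U_H(t)$, take $\vartheta_\ell(u)=u^\ell$ for $\ell=1,2$, and take the candidate function
\[f(x,t)=\frac{\log U_H(xt)-\log U_H(t)}{b(t)},\qquad x\ge1.\]
Since $\tau_F>0$ and $\alpha_F>1/2>0$, Proposition~\ref{P2}(ii) gives $\tau_H=\tau_F>0$ and places $H$ in the same max-domain as $F$, so $U_H>0$ eventually, $f$ is well defined, nonnegative and nondecreasing in $x$, and by the standard extended-regular-variation limits (Example~\ref{E1}) $f(x,t)\to g(x):=h_{\min(\gamma_H,0)}(x)$ as $t\to\infty$, with the domination \eqref{cond2} following from the usual uniform ($\Pi$-variation) inequalities for $\log U_H$. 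Because $\log(Z_{n-i+1,n}/Z_{n-k,n})=\log U_H(\xi_{n-i+1,n})-\log U_H(\xi_{n-k,n})=b(\xi_{n-k,n})\,f(\xi_{n-i+1,n}/\xi_{n-k,n},\xi_{n-k,n})$ and the weights $\omega_{ik}$ in \eqref{nik} depend only on the $\delta$-concomitants and $k$, one obtains the key identity $\amsmathbb{M}_{k,n}^{(\ell)}=b(\xi_{n-k,n})^{\ell}\,S^f_{k,n}(\vartheta_\ell)$, $\ell=1,2$.

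Next I would check the hypotheses of Theorem~\ref{T1genekmi} for $\vartheta=\vartheta_\ell$ and this $f$. Condition~\ref{C6} for $\tilde F=F\circ U_H$ is precisely the assumed regular variation of $1-F(U_H(\cdot))$ at index $-\alpha_F$; the eventual monotonicity and even the existence of $\tilde F^{\prime}$ need not be verified thanks to the footnote to Condition~\ref{C6}, since $\vartheta_\ell$ is monotone and $f$ is monotone in its first argument. Condition~\ref{varthetaonlycondition} reduces to $\int_1^\infty g_2(x)^{2\ell}\,x^{-2\alpha_F+\varepsilon}\dd x<\infty$, which holds because $\alpha_F>1/2$, the dominating function $g_2$ being bounded when $\gamma_H<0$ and at most a power of $\log x$ when $\gamma_H\ge0$. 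Theorem~\ref{T1genekmi} then yields $S^f_{k,n}(\vartheta_\ell)\stackrel{P}{\to}s_\ell:=\int_1^\infty g(y)^\ell\,\tilde F^\circ(\dd y)$ with $\tilde F^\circ(x)=(1-x^{-\alpha_F})I(x\ge1)$, and $0<s_2<\infty$. Since $\xi_{n-k,n}\stackrel{P}{\to}\infty$ and $b(t)\to\max(\gamma_H,0)$, it follows that $\amsmathbb{M}_{k,n}^{(1)}\stackrel{P}{\to}\max(\gamma_H,0)\,s_1$ and, crucially, $(\amsmathbb{M}_{k,n}^{(1)})^2/\amsmathbb{M}_{k,n}^{(2)}=(S^f_{k,n}(\vartheta_1))^2/S^f_{k,n}(\vartheta_2)\stackrel{P}{\to}s_1^2/s_2$, the unknown factor $b(\xi_{n-k,n})$ cancelling. (When $\gamma_F>0$ one may instead apply Theorem~\ref{Tconsistency} directly to $\amsmathbb{M}_{k,n}^{(\ell)}=S_{k,n}((\log)^\ell)$, since then $F,G,H$ are all regularly varying and $\alpha_F>1/2$ is equivalent to $\gamma_G>\gamma_F$, which is exactly Condition~\ref{envelope_moment} for $\varphi=(\log)^\ell$.)

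It remains to evaluate $s_1,s_2$ and substitute into \eqref{momentestimator_censored}. If $\gamma_H\ge0$ then $g(x)=\log x$, $s_\ell=\ell!\,\alpha_F^{-\ell}$, so $s_1^2/s_2=\tfrac12$ and $\max(\gamma_H,0)\,s_1=\gamma_H/\alpha_F$, which equals $\gamma_F$ when $\gamma_F>0$ (as $\alpha_F=\gamma_H/\gamma_F$ by Proposition~\ref{P2}(i)) and equals $0=\gamma_F$ when $\gamma_F=0$. If $\gamma_H<0$ then $g(x)=(x^{\gamma_H}-1)/\gamma_H$ with $\gamma_H=\alpha_F\gamma_F$, a direct integration gives $s_1=(\alpha_F(1-\gamma_F))^{-1}$ and $s_2=2\big(\alpha_F^2(1-\gamma_F)(1-2\gamma_F)\big)^{-1}$, hence $s_1^2/s_2=(1-2\gamma_F)/(2(1-\gamma_F))$ and $\max(\gamma_H,0)\,s_1=0$. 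In every case $s_1^2/s_2\neq1$ and $s_2>0$, so the events on which $\mathbb{g}_{k,n}$ is ill-defined have vanishing probability, and the map $\Phi(m,r):=m+1-\tfrac12(1-r)^{-1}$ is continuous at $(\max(\gamma_H,0)\,s_1,\,s_1^2/s_2)$; the continuous mapping theorem then gives $\mathbb{g}_{k,n}=\Phi\big(\amsmathbb{M}_{k,n}^{(1)},(\amsmathbb{M}_{k,n}^{(1)})^2/\amsmathbb{M}_{k,n}^{(2)}\big)\stackrel{P}{\to}\max(\gamma_H,0)\,s_1+1-\tfrac12\big(1-s_1^2/s_2\big)^{-1}=\gamma_F$, the last equality being the short algebraic check that returns $\gamma_F$ in all three domains.

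The main obstacle is the degeneracy of each $\amsmathbb{M}_{k,n}^{(\ell)}$ when $\gamma_F\le0$: one cannot feed the pair $(\amsmathbb{M}_{k,n}^{(1)},\amsmathbb{M}_{k,n}^{(2)})$, both of which tend to $0$, into the moment formula, and the resolution is exactly the renormalisation above, which isolates the ratio $(S^f_{k,n}(\vartheta_1))^2/S^f_{k,n}(\vartheta_2)$ as the quantity that genuinely converges to something useful; this is why a candidate function carrying the unknown auxiliary factor $b$ is needed rather than a plain extreme Kaplan--Meier integral. A secondary, routine but slightly delicate point is verifying that $f$ satisfies Definition~\ref{D5} uniformly across the three max-domains, in particular at the $\Pi$-varying boundary case $\gamma_H=0$.
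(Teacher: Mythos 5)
Your proof is correct and takes essentially the same approach as the paper: for the substantive case $\gamma_F\le 0$, both you and the paper recast $\amsmathbb{M}_{k,n}^{(\ell)}$ as $b(\xi_{n-k,n})^{\ell}\,S^f_{k,n}(\vartheta_\ell)$ via the candidate function $f(x,t)=(\log U_H(xt)-\log U_H(t))/(a_H(t)/U_H(t))$, apply the consistency part of Theorem~\ref{T1genekmi} (using the footnote to Condition~\ref{C6} to dispense with the density hypothesis, and $\alpha_F>1/2$ to check Condition~\ref{varthetaonlycondition}), and let the unknown factor $b(\xi_{n-k,n})$ cancel in $(\amsmathbb{M}_{k,n}^{(1)})^2/\amsmathbb{M}_{k,n}^{(2)}$ before invoking the continuous mapping theorem. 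The only variations are that you fold $\gamma_F>0$ into the same candidate-function treatment whereas the paper disposes of that case separately and more directly via Theorem~\ref{Tconsistency} on $\varphi=\log^\ell$ (a route you also note), and the dominating bound $g_2$ at $\gamma_H=0$ is a small power of $x$ rather than a power of $\log x$, which still poses no problem since $\alpha_F>1/2$ leaves room to choose $\delta$ small.
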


%Notice that the positiveness of $\alpha_F$ in the latter theorem is crucial since in case of zero $\alpha_F$ the limit in probability of $\mathbb{g}_{k,n}$ does not depend on $\gamma_F.$
Notice that the assumption $\alpha_F>1/2$ ensures that $F$ and $H$ belong to the same maximum domain of attraction by Proposition~\ref{P2} (ii). 

If both $F$ and $G$ belong to the Fr\'echet or Weibull max-domains of attraction, we get by Proposition~\ref{P2} (i) %Example~\ref{Ex2} 
\[\alpha_F = \frac{\gamma_H}{\gamma_F} = \frac{\gamma_G}{\gamma_F+\gamma_G},\] hence for positive $\gamma_F$ and $\gamma_G$ the condition $\alpha_F>1/2$ is equivalent to the natural assumption $1/\gamma_F>1/\gamma_G$ (and $1/\gamma_F<1/\gamma_G$ for negative $\gamma_F$ and $\gamma_G$) appearing in~\cite{bww}, that is the tail of $F$ is lighter than the tail of $G.$

Now, let us consider the asymptotic normality of the estimator $\mathbb{g}_{k,n}.$ This result can be derived immediately applying Theorem~\ref{T2genekmi}.
%; see Corollary~\ref{C5.3} below. 
However, Theorem~\ref{T2genekmi} requires imposing the second-order conditions both on $U_H$ and $U_{\tilde F}$ which can be avoided for $\gamma_F\neq 0.$ For positive $\gamma_F$ imposing Condition~\ref{secondorder} is enough; for negative $\gamma_F,$
%The proof of this property is based on Theorem~\ref{T3an} (for $\gamma_F>0$) and Theorem~\ref{T1genekmi} (for $\gamma_F\le0$). We can derive this result immediately applying Theorem~\ref{T2genekmi}; see Corollary~\ref{C5.3} below.  {However, we first dispose of some conditions imposed in Theorem~\ref{T2genekmi} in the special case we consider here, see Theorem~\ref{Tmomentan}.} Note that Theorem~\ref{T2genekmi} requires imposing the second order conditions both on $U_H$ and $U_{\tilde F}.$ Our proof is slightly longer since we also show that it is possible to avoid such restriction for $\gamma_F\neq 0.$ For positive $\gamma_F,$ we prove the asymptotic normality of the estimator $\mathbb{g}_{k,n}$ using the results of Section~\ref{sec:ekmi}. 
%For negative $\gamma_F,$  using the regular variation property of $\tau_F - U_F,$ we assume the analogue of the second order condition \eqref{standardsecondorder} with $a(t) = -\gamma_F(\tau_F - U_F(t)).$
based on Remark B.3.15 in~\cite{dehaan}, 
we assume the following condition
\begin{equation}\label{secondordernegative}\lim_{t\to\infty}\frac{\frac{\tau_F - U_F(tx)}{\tau_F - U_F(t)} - x^{\gamma_F}}{A(t)} = x^{\gamma_F} h_\rho(x)\end{equation} for some $\rho\le0,$ and $A(t)$ positive or negative, regularly varying with index $\rho$ and satisfying $A(t)\to 0$ as $t\to\infty,$ which is the analogue of Condition~\ref{secondorder} for negative $\gamma_F.$

Recall the notation $c_n = (1 - F(U_H(n/k)))^{-1}.$
\begin{theorem}\label{Tmomentan} Assume the conditions of Theorem~\ref{Tmomentconsistency}.
\begin{enumerate}
\item[(i)] Let $\gamma_F\!>\!0,$ $U_F$ satisfy the second-order Condition~\ref{secondorder} and $\sqrt{k}\,a(c_n)\!\to\!\lambda$ with finite $\lambda.$ Then
\begin{equation}\label{lastofthelast}\sqrt{k}(\mathbb{g}_{k,n} - \gamma_F) \xrightarrow{d} N(b, \sigma^2),\end{equation} where
\begin{equation}\label{biasvariance}b = \lambda \frac{\gamma_F - \gamma_F \rho + \rho}{\gamma_F(1-\rho)^2}, \quad \sigma^2 = \frac{\gamma^3_G + \gamma_G\gamma_F^2(\gamma_G - \gamma_F+1)^2}{(\gamma_G - \gamma_F)^3}.\end{equation}
\item[(ii)] Let $\gamma_F = 0,$  $U_H$ and $U_{\tilde F}$ satisfy \eqref{secondorderlog} and Condition~\ref{secondorder} with auxiliary functions $Q,$ $\tilde a$ and second-order parameters $\rho^\prime,$ $\tilde \rho,$ respectively. Assume $\sqrt{k}Q(n/k) \to\lambda$ and $\sqrt{k}\tilde a(n/k) \to\tilde \lambda$ with finite $\lambda, \tilde \lambda.$ Then \eqref{lastofthelast} holds with
\[b= \lambda \frac{1+\alpha_F}{\alpha_F^2} + \tilde\lambda \frac{1 -\tilde\rho+ \alpha_F \tilde \rho}{(1-\tilde \rho)^2}, \quad \sigma^2 = \frac{\alpha_F(\alpha_F^2 + \alpha_G^2)}{(\alpha_F - \alpha_G)^3}. %\frac{\alpha^4_F + (\alpha_F - \alpha_G+\alpha_F\alpha_G)^2 I(\alpha_G>0)}{\alpha_F(\alpha_F-\alpha_G)^3}.
\]
\item[(iii)] Let $\gamma_F<0,$ $U_F$ satisfy the second-order condition \eqref{secondordernegative}, $\sqrt{k}A(c_n) \to \lambda$ and $\sqrt{k}(1 - U_H(n/k)/\tau_H) \to \hat \lambda$ with finite $\lambda, \hat \lambda.$  Then \eqref{lastofthelast} holds with
\begin{eqnarray*} b &=& \lambda \frac{(1-\gamma_F)(1-2\gamma_F)}{1-\gamma_F-\rho}\left(\frac{2-3\gamma_F-\rho}{1-2\gamma_F-\rho}-\frac{2}{\gamma_F} + \frac{1}{\gamma_H(1-\gamma_F)^2(1-2\gamma_F)}\right)\\ &&+ \hat \lambda \left(\frac{(1-\gamma_F)(1-\gamma_F-\gamma_F^2)}{1 - 3\gamma_F} - \frac{\gamma_F}{2\gamma_H(1 - \gamma_F)(1 - 2\gamma_F)}\right),
\end{eqnarray*}
and $\sigma^2$ defined by \eqref{sigma}.
%\[\sigma^2 = \frac{\gamma_G(1 - \gamma_F)^2(1 - 2\gamma_F^2)(\gamma_G^2 + \gamma_F^2 - \gamma_G^2\gamma_F + 5\gamma_G\gamma_F^2 + 6\gamma_G^2\gamma_F^2)}{\prod_{j=2}^4(\gamma_G-\gamma_F - j\gamma_G\gamma_F)}.\]
\end{enumerate}
\end{theorem}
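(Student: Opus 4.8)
\textbf{Proof plan for Theorem \ref{Tmomentan}.}

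The plan is to treat the three cases through a common delta-method reduction, differing only in which convergence engine supplies the joint limit of the pair $(\amsmathbb{M}^{(1)}_{k,n}, \amsmathbb{M}^{(2)}_{k,n})$. First I would record that $\mathbb{g}_{k,n}=\Phi(\amsmathbb{M}^{(1)}_{k,n}, \amsmathbb{M}^{(2)}_{k,n})$ for the smooth map $\Phi(u,v)=u+1-\tfrac12(1-u^2/v)^{-1}$, and compute its gradient at the appropriate deterministic limit point. In Case (i), $\gamma_F>0$: here $\log x$ and $(\log x)^2$ are admissible $\varphi$'s for the heavy-tail theory of Section \ref{sec:ekmi} — Condition \ref{envelope_moment} holds because $\gamma_F<\gamma_G$ (equivalently $\alpha_F>1/2$) makes $\alpha(\varepsilon)$ sufficiently negative, and Condition \ref{cond:phi_deriv} follows since $\varphi'(x)=1/x$ and $2\log x/x$ are eventually monotone — so Corollary \ref{R2} applies to each coordinate. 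I would upgrade the two marginal CLTs to a joint bivariate CLT by noting that $\sqrt{k}(S_{k,n}(\varphi)-S_\circ(\varphi))$ is, by Theorem \ref{T4}, a sum of conditionally i.i.d. terms plus negligible remainder, so the Cramér–Wold device applied to $a_1\log x + a_2(\log x)^2$ (again an admissible $\varphi$) yields joint asymptotic normality with covariance obtained from $\mathrm{cov}(W^\circ(f_1),W^\circ(f_2))$; the deterministic means are $S_\circ(f_1)=\gamma_F$ and $S_\circ(f_2)=2\gamma_F^2$, and the asymptotic bias vector is $(\lambda C(\gamma_F,\rho;f_1),\lambda C(\gamma_F,\rho;f_2))$ from Corollary \ref{R2}. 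Plugging into the delta method gives $b$ and $\sigma^2$ of \eqref{biasvariance}; the bias simplification reduces to evaluating the two integrals $\int_1^\infty x f_\ell'(x) h_\rho(x^{1/\gamma_F})F^\circ(\mathrm dx)$ in closed form and combining them with $\nabla\Phi$.

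For Case (iii), $\gamma_F<0$: I would use the fourth candidate function of Example \ref{E1}, $f(x,t)=(\tau_F-U_F(xt))/(\tau_F-U_F(t))$, with limit $g(x)=x^{\gamma_F}$, so that $\log x$ becomes $\vartheta_1(y)=\log(\cdots)$ composed appropriately — more precisely one writes $\log U_F(xt)-\log U_F(t)$ in terms of $f$ and $U_F(t)\to\tau_F$, so $\vartheta_\ell(y)$ has limit $(\tau_F/(\tau_F)\cdot\text{stuff})$; concretely $g(x)=x^{\gamma_F}$ leads, after the $\log$ transform and using $U_F(t)\to\tau_F>0$, to $\vartheta_\ell$-integrals against $\tilde F^\circ(\mathrm dx)=(1-x^{-\alpha_F})$-type measure. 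The appropriate engine is Theorem \ref{T2genekmi} with the second-order input \eqref{secondordernegative} translated into Condition \ref{secondorder} for $U_{\tilde F}$ via $a(t)=-\gamma_F(\tau_F-U_F(t))$, but following the remark after Theorem \ref{Tmomentconsistency} I would instead argue directly: since $\vartheta$ and $f(\cdot,t)$ are monotone, Condition \ref{C6}'s differentiability hypotheses can be dropped, and the two bias contributions — one from the second-order behaviour of $f$ (parameter $\rho$, constant $\lambda$) and one from the replacement $Z_{n-k,n}\rightsquigarrow U_H(n/k)$ which enters through $\sqrt{k}(1-U_H(n/k)/\tau_H)\to\hat\lambda$ — are computed separately and fed through $\nabla\Phi$ evaluated at the limit $(\amsmathbb{M}^{(1)},\amsmathbb{M}^{(2)})\to(\gamma_F\cdot(\text{const}),\dots)$; the variance $\sigma^2$ in \eqref{sigma} comes from $\mathrm{var}$ and $\mathrm{cov}$ of $\tilde W^\circ(\vartheta_\ell)$ under the Pareto-standardized model with indices $\alpha_F=\gamma_H/\gamma_F$, $\alpha_G=\gamma_G/(\gamma_F+\gamma_G)$.

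Case (ii), $\gamma_F=0$, is the genuinely new and most delicate one, and I expect it to be the main obstacle. Here the natural candidate is the third function of Example \ref{E1}, $f(x,t)=(\log U_F(xt)-\log U_F(t))/(a_F(t)/U_F(t))$ with limit $g(x)=\log x=h_0(x)$, so $\amsmathbb{M}^{(\ell)}_{k,n}$ must be rescaled — but crucially the ratio $(\amsmathbb{M}^{(1)})^2/\amsmathbb{M}^{(2)}$ is scale-free, which is exactly why the auxiliary function $a_F$ cancels in $\mathbb{g}_{k,n}$ despite being unknown; one must track this cancellation carefully through Theorem \ref{T1genekmi}/\ref{T2genekmi} applied to $\vartheta=\mathrm{id}$ and $\vartheta(y)=y^2$, so that $g(\xi)=\log\xi$ and $g_2(\xi)=(\log\xi)^2$, whose moments against Pareto$(1)$ are $1$ and $2$. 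Now \emph{two} second-order parameters appear: $\rho'$ governing the quality of the log-von-Mises relation \eqref{secondorderlog} for $U_H$ (entering via $Q$, $\lambda$, producing the $H_{\gamma_-,\rho'}$ bias) and $\tilde\rho$ governing Condition \ref{secondorder} for $U_{\tilde F}$ (entering via $\tilde a$, $\tilde\lambda$). The hard part is bookkeeping: I must identify which of $f$'s second-order term and $\tilde F^t$'s second-order term is responsible for each of the two bias summands in the stated $b$, evaluate the resulting iterated-integral constants $\int x\vartheta_g'(x)h_{\tilde\rho}(x^{\alpha_F})\tilde F^\circ(\mathrm dx)$ and $\int \tilde g(x)\vartheta'(g(x))\tilde F^\circ(\mathrm dx)$ with $\tilde g=H_{0,\rho'}$-type, and then combine the bivariate limit through $\nabla\Phi$ — the chain of substitutions relating $\rho'$ in \eqref{secondorderlog} to the abstract $\hat a,\tilde g$ of Condition \ref{second_order_2} is where sign errors and misattributed constants are most likely, so I would verify the final $b$ against the known uncensored moment-estimator bias (Theorem 3.5.4 and (3.5.16) in \cite{dehaan}) in the degenerate limit $\alpha_F\to1$, $\alpha_G\to0$.
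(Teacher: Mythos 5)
Your plan mirrors the paper's: a delta-method reduction through the smooth map $\Phi(u,v)=u+1-\tfrac12(1-u^2/v)^{-1}$ on the bivariate vector $(\amsmathbb{M}^{(1)}_{k,n},\amsmathbb{M}^{(2)}_{k,n})$, whose joint CLT is obtained via Cram\'er--Wold from the conditionally i.i.d.\ decomposition of Theorem \ref{T4}, using the Section \ref{sec:ekmi} machinery for $\gamma_F>0$ (Lemma \ref{L2dim} in the paper) and the generalized candidate-function framework for $\gamma_F\le 0$. Two small misalignments worth noting: in the paper, case (ii) is the \emph{quick} case (Theorem \ref{T2genekmi} applies directly, and the key simplification is that $\rho'=0$ when $\gamma_-=0$, so $\tilde g=\tfrac12\log^2$), whereas case (iii) is where the paper does the real work, re-deriving the bias from \eqref{secondordernegative} via an explicit change of variables $x=(\tau-U_H(sy))/(\tau-U_H(s))$ so as to avoid imposing a second-order condition on $U_{\tilde F}$ rather than merely dropping differentiability hypotheses from Condition \ref{C6}; also, for $\gamma_F\neq 0$ one has $\alpha_G=\gamma_H/\gamma_G=\gamma_F/(\gamma_F+\gamma_G)$, not $\gamma_G/(\gamma_F+\gamma_G)$ as you wrote (the latter is $\alpha_F$).
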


%Note that it is possible to prove the assertion $(i)$ of the latter theorem by imposing the second order condition \eqref{standardsecondorder} instead of Condition~\ref{secondorder}, as it was done, e.g., in Theorem 3.5.4, \cite{dehaan}, proving asymptotic normality of the moment estimator in the uncensored case. We avoid taking such a route to highlight that in case $\gamma_F>0$ the proof quite simply follows from Theorem~\ref{T3an}.
The result above is in agreement with the asymptotic normality result of the moment estimator in the uncensored case, see, e.g., Theorem 3.5.4, \cite{dehaan}. Indeed, setting $\gamma_G = \infty,$ $\gamma_G=-\infty$ and $\alpha_G=0$ for positive, negative and zero $\gamma_F,$ respectively (corresponding to the non-censored case), the asymptotic variances in Theorem~\ref{Tmomentan} coincide with all the corresponding asymptotic variances in Theorem 3.5.4, \cite{dehaan}. The slight difference in asymptotic biases between Theorem~\ref{Tmomentan} $(i)$ and the corresponding case in Theorem 3.5.4, \cite{dehaan}, is caused by different second-order conditions and in particular different auxiliary functions.

%\begin{remark}[Comparison with the moment estimator adapted for censoring]\rm
%The moment estimator adapted for censoring presented in~\cite{einmahl2008statistics} is based on the simple observation that  {the censoring proportion} $p = \gamma_H/\gamma_F$ and thus one can estimate $\gamma_H$ and $p$ to estimate $\gamma_F$ indirectly, and their conditions are of very different nature to the ones we impose. The asymptotic variance of their estimator is usually smaller than the asymptotic variance of $\mathbb{g}_{k,n}$. However, the method proposed in~\cite{einmahl2008statistics} works exclusively for the estimation of extreme value index. Moreover, the conditions imposed in~\cite{einmahl2008statistics} on the (uniformity and speed of) convergence of $p(z) = \pr(\delta=1|Z=z)$ to $p$ (whose definition coincides with our $\alpha_F$ under differentiability of $F$ and $G$ -- a condition we do not require) are rather technical. Our conditions offer clear alternatives.

The condition $\gamma_G>\gamma_F$ imposed in Theorem~\ref{Tmomentconsistency} and Theorem~\ref{Tmomentan} {is crucial, and we can only hope to remove it for consistency using other proof techniques than the ones presented in this article. An alternative is to construct a moment estimator for the censoring mechanism and for the entire sample, and then using $\gamma_F=(\frac{1}{\gamma_H}-\frac{1}{\gamma_G})^{-1}$ to provide a finite asymptotic variance estimator of $\gamma_F$. These considerations are still an open and important research topic.

The results of Section \ref{sec:exten} can be used to advance other methods, apart from extreme value index estimation presented in this section. For instance, the scale estimator provided in Section 4.2, \cite{dehaan}, can be adopted in censored settings with the use of our methodology. This results in the method of extreme quantile and tail estimation valid for all max-domains of attraction, which is the subject of our future work.

\section{Finite-sample behavior}\label{sec:sim}
{ We provide three simulation studies related to the performance of estimators derived as EKM integrals. The first relates to the positive extreme value index setting from Section~\ref{sec:ekmi}; the second relates to the proposed moment estimator of Section~\ref{sec:evi_est}; the third one explores, for the latter estimator, the quality of the approximate asymptotic distribution for finite samples.}
We also highlight that the scope of applications of our methodology is much broader, and the aim of the simulations below is to illustrate the finite-sample validity of our construction.

\subsection{Simulation study for $\gamma_F>0$}
{ 
In this section, we demonstrate the performance of estimators of Section~\ref{sec:ekmi} under several simulation scenarios, and also compare them to a benchmark estimator. 

The targets are the first three log-moments of the limit event tail distribution $F^\circ$, which are respectively given by $m_1=\gamma_F$, $m_2=2\gamma_F^2$, and $m_3=6\gamma_F^3$. We also consider a plug-in estimator using an estimator of $\gamma_F$ from our methodology. The benchmark estimator is a plug-in estimator derived in \cite{einmahl2008statistics}. Specifically, we consider two different distributions: the Burr and Fr\'echet distributions, with parameters chosen to achieve degrees of non-censoring of 60\%, 75\%, and 90\%, respectively. The event distribution extreme value index is fixed at \(\gamma_F = 0.5\).

The nine estimators considered are defined as follows, for $r=1,2,3$:
\begin{align*}
\widehat{m}^r_{k} =  S_{k,n}(\log^r)/S_{k,n}(1),\quad \widehat{m}^{KM,r}_{k} = r! (\widehat\gamma^{c,Hill}_{k}/S_{k,n}(1))^r, \quad\widehat{m}^{B,r}_{k} &= r! (\widehat\gamma^{B,Hill}_k)^r,
\end{align*}
where $\widehat\gamma^{c,Hill}_{k}$ is given in \eqref{cHill_def} and $\widehat\gamma^{B,Hill}_k$ is the Hill-type estimator of \cite{einmahl2008statistics}. Recall that $S_{k,n}(1) = \amsmathbb{F}_{k,n}(\infty)$ is an asymptotically negligible normalizing factor, see Remark \ref{Rnormalization}. {Note that since our construction of the Hill estimator in \eqref{cHill_def} is identical to the one introduced in equation (7) of \cite{worms2014new}, we do not focus on the numerical comparison of \eqref{cHill_def} with the other estimator proposed in \cite{worms2014new} and the one from \cite{einmahl2008statistics} in this simulation study; this comparison can be found in the former reference.}

\begin{figure}[htbp]
    \centering
    \includegraphics[width=1\textwidth,trim={0cm 0cm 0cm 0cm},clip]{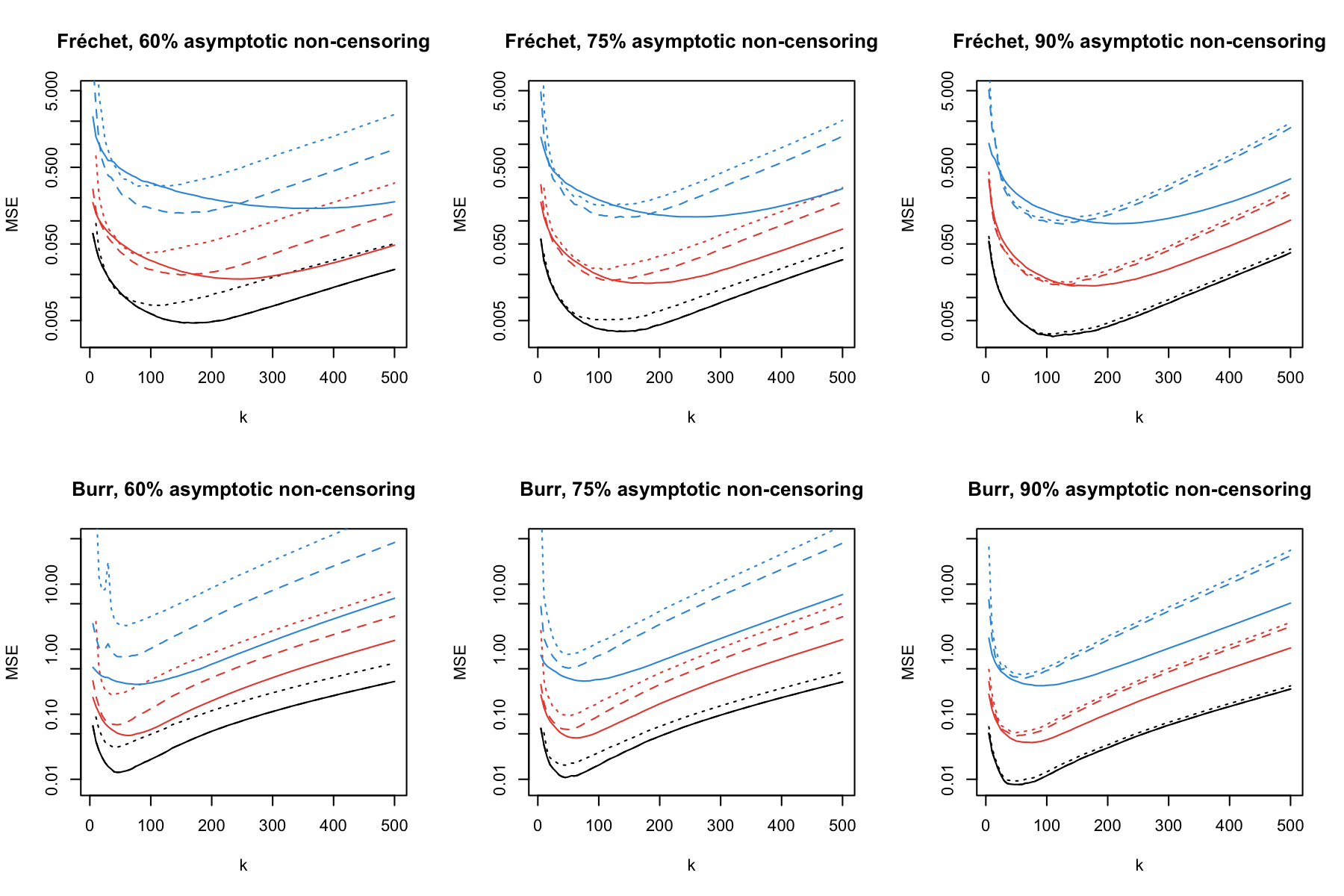}
    \caption{{ Mean Square Error (MSE) of the estimators $\widehat{m}^r_{k}$ (solid), $\widehat{m}^{KM,r}_{k}$ (dashed), and  $\;\widehat{m}^{B,r}_{k}$ (dotted) as a function of $k$, for $n=1000$ and across $1000$ simulations. We consider $k\in\{5,\dots,n/2\}$ and $r=1,2,3$ in black, red and blue, respectively.}} 
    \label{fig:sim1}
\end{figure}

The results of the study for $n=1000$ and $k\in\{5,\dots,n/2\}$ are depicted in Figure \ref{fig:sim1}, where we plot the Mean Square Error (MSE) of the estimators as a function of $k$, averaged across $1000$ simulations. For the Burr and Fr\'echet distributions with a 60\% and 75\% degree of non-censoring, we observe that our proposed estimators perform particularly well. The lowest MSE occurs at different $k$ for $\widehat{m}^r_{k}$ and $\widehat{m}^{KM,r}_{k}$, though both manifest a similar global minimum.  The behavior of $\widehat{m}^{B,r}_{k}$ is similar to the latter, though consistently worse. Thus, the finite-sample performance of EKM integrals seems to be rather favorable. The asymptotic variance of $\widehat\gamma^{c,Hill}_{k}$ is larger than that of $\widehat\gamma^{B,Hill}_k$, so the finite-sample bias is the main driver for the lower MSE.
At a 90\% degree of non-censoring, we observe the gap close between all estimators, as expected.
}

\subsection{Simulation study for $\gamma_F\in \amsmathbb{R}$}
{ 

In this section, we examine the performance of two different estimators: the moment estimator from Section~\ref{sec:evi_est}, namely $\mathbb{g}_{k,n}$, and the benchmark moment estimator from \cite{einmahl2008statistics}, which we denote $\widehat{\gamma}^M_{k}$. We assess their performance across three distributions belonging to different max-domains of attraction and for two different sample sizes, $n=10^3, \,10^4$. The number of simulations is again $1000$. The metrics used for evaluation are the MSE and the empirical probability of correct classification into max-domains of attraction. \footnote{{ The classification metric is equal to one if the estimator has the correct sign, and zero otherwise. In case $\gamma_F=0$, we define correct classification as being in a neighborhood of zero, with radius $1/5$.}} To build the moment estimator we also use the normalized empirical moments $\widehat m^r_k$ instead of $\amsmathbb{M}^{(r)}_{k,n}$ as in the previous simulation study.

The distributions considered in our simulation study are:

\begin{itemize}
    \item[a)] $F,G\sim\mbox{Beta}(1,2)$; $\gamma_F=-1/2$.
    \item[b)] $F\sim\mbox{Burr}(\sqrt{2},\sqrt{2})$ and $G\sim\mbox{Burr}(1/\sqrt{3},1/\sqrt{3})$; $\gamma_F=1/2$. 
    \item[c)]  $F\sim\mbox{Weibull}(1,1)$ and $G\sim\mbox{Weibull}(1/2,1)$; $\gamma_F=0$.
\end{itemize}

\begin{figure}[htbp]
    \centering
    \includegraphics[width=1\textwidth,trim={0cm 0cm 0cm 0cm},clip]{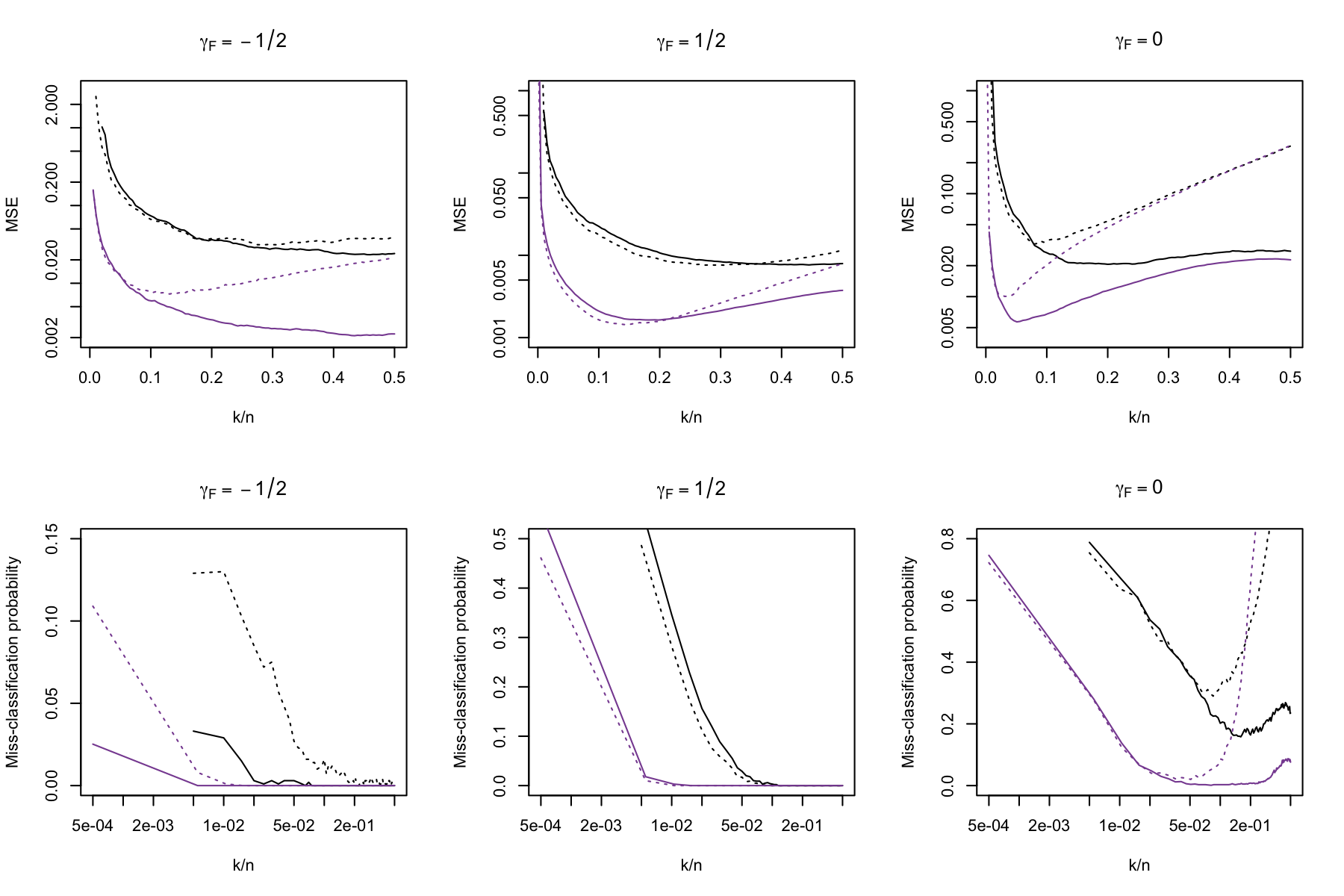}
    \caption{{ Top panels: Mean Square Error (MSE)  of the estimators $\mathbb{g}_{k,n}$ (solid) and $\widehat{\gamma}^M_{k}$ (dashed) as a function of $k/n$, for $n=10^3,\,10^4$ (black and purple, respectively) and across $1000$ simulations. Bottom panels: corresponding miss-classification rates.}} 
    \label{fig:sim2}
\end{figure}

The results of the study as a function of $k/n$ are depicted in Figure \ref{fig:sim2}, where we plot the MSE and the miss-classification rate (one minus the correct classification rate), averaged across simulations. For the Beta and Weibull distributions we observe that our proposed estimator is particularly well; the MSE dominates the benchmark from below, and the global minimum is lower. The miss-classification probability is also provided in Figure \ref{fig:sim2} and follows a similar pattern. However, for the Burr distribution, we observe that the behavior is reversed, and the benchmark is favoured. The results are stable over the two different sample sizes.
}

\subsection{Finite versus asymptotic behavior for $\gamma_F\in\amsmathbb{R}$}
{ 

In this section, we examine the finite-sample behavior of the estimator $\mathbb{g}_{k,n}$ from Section~\ref{sec:evi_est}. In particular, we are interested to see how the asymptotic variance relates to finite-sample coverage probabilities, which ultimately is important for statistical inference.%\footnote{{ Namely, we compute the relative frequency with which $\mathbb{g}_{k,n}-\gamma_F$ falls outside of the $95\%$ confidence interval from its asymptotic normal approximation.}} This footnote is wrong!
The number of simulations is again $1000$. The distributions considered in our simulation study are:
\begin{itemize}
    \item[a)] $F,G\sim \mbox{Beta}(1,2);$ $\gamma_F=-1/2$.
    \item[b)] $F\sim\mbox{Pareto}(2)$ and $G\sim\mbox{Pareto}(2/3)$; $\gamma_F=1/2$.
    \item[c)]  $F\sim\mbox{Exp}(6)$ and $G\sim\mbox{Exp}(1)$; $\gamma_F=0$, $\alpha_F=6/7$.
\end{itemize}

\begin{figure}[htbp]
    \centering
    \includegraphics[width=1\textwidth,trim={0cm 0cm 0cm 0cm},clip]{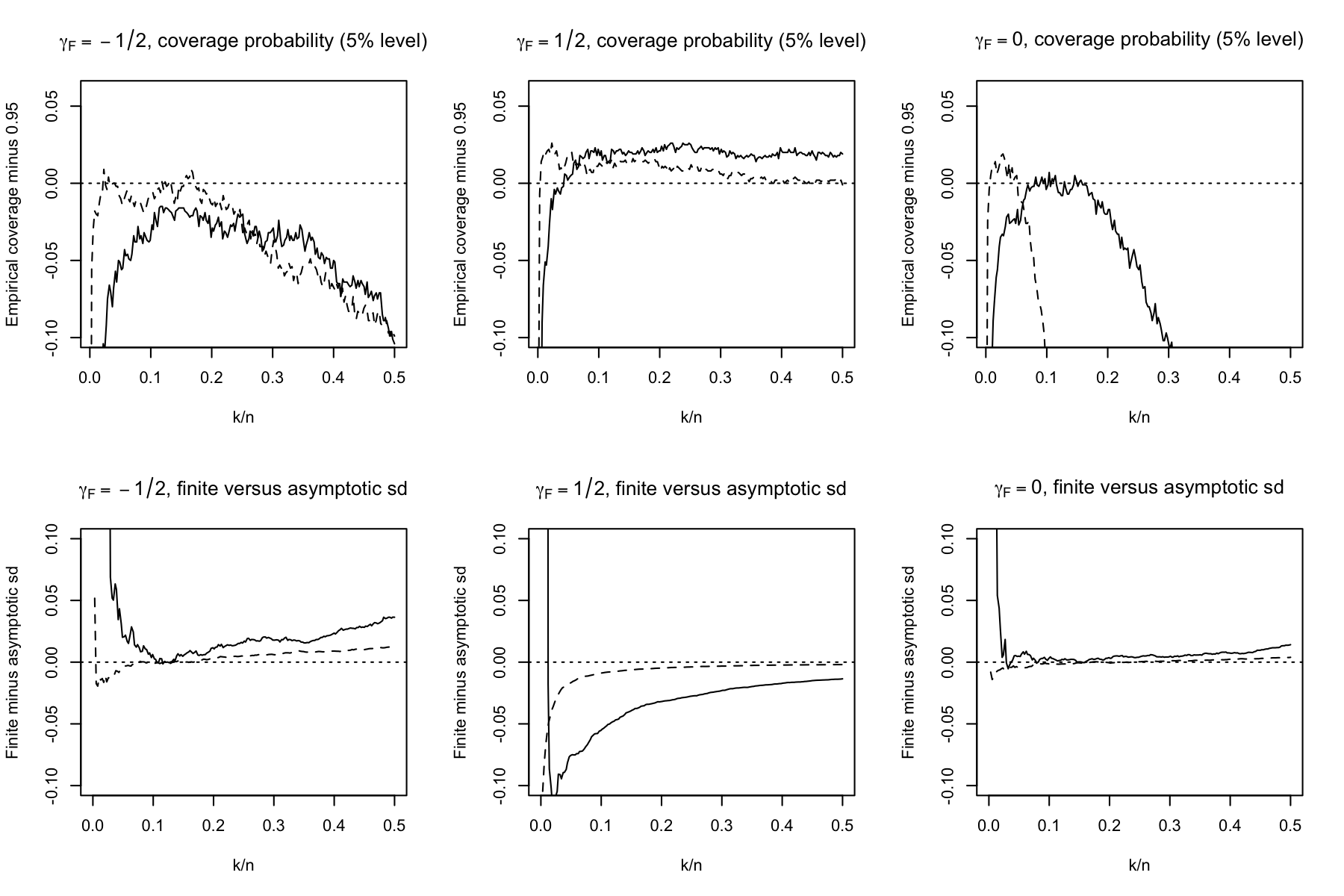}
    \caption{{ Top panels: difference between empirical and desired coverage probabilities of the estimator $\mathbb{g}_{k,n}$, for three distributions in different max-domains of attraction, sample sizes $n=10^3,\,10^4$ (solid and dashed, respectively), as a function of $k/n$, and across $1000$ simulations. Bottom panels: corresponding difference between finite-sample and asymptotic standard deviations.}} 
    \label{fig:sim3}
\end{figure}

The results of the study as a function of $k/n$ are provided in Figure \ref{fig:sim3}, where we plot the difference between empirical and asymptotic coverage probabilities, where the former are averaged across simulations. For the Weibull and Fr\'echet max-domains of attraction, we observe that the coverage levels are kept rather closely, and get better with sample size increasing, as expected. As these distributions are the archetypes of their max-domains of attraction, good coverage can be seen to extend even for larger $k$. This is in contrast with the exponential distribution, which exhibits significantly reduced coverage for all but the top $k$. For reference, the difference  between the finite-sample standard deviation and its asymptotic approximation (which is of order $1/\sqrt{k}$) is provided in the bottom panels of Figure \ref{fig:sim3}.

}
\section{Brain cancer dataset}\label{sec:real_data}

The analysis examines survival data from a clinical study, which includes significant censoring (40\% right-censoring). {The dataset\footnote{The dataset is available at \url{https://portal.gdc.cancer.gov/}, under the \emph{Brain} category, and then through the \emph{Clinical Data Analysis} tool.} consists of 1342 entries corresponding to time-to-death observations from male and female patients of brain cancer.} We compare the performance of the proposed moment estimator based on extreme Kaplan--Meier integrals with the established moment estimator of \cite{einmahl2008statistics} in estimating the extreme value index under random censoring, in this case our estimator suggests a negative value. We note that the estimator of \cite{einmahl2008statistics} and the usual moment estimator \cite{moment} have by construction the same sign throughout all values of $k$, despite the latter not taking censoring effects into account. Thus, from a classification point of view, our estimator provides an alternative which does incorporate censoring effects. %, especially when the censoring distribution is lighter than the target distribution. But it contradicts our assumption \alpha_F>\alpha_G!

Our estimator and the estimator of \cite{einmahl2008statistics} are computed over a range of thresholds representing different proportions of extreme observations, as can be seen in Figure~\ref{fig:bmb}. For this single sample, the proposed estimator shows stability across thresholds, except very high ones with $k\le 40.$ The estimator of \cite{einmahl2008statistics} does not stabilize at larger quantiles, and it has a shorter region below zero. However, both estimators stay below zero for high thresholds, with evidence that the underlying distribution belongs to the Weibull MDA.%, in particular having a finite endpoint.

While sensitivity to threshold selection is a challenge for using any of the two approaches as an estimation or classification tool, such selection for the proposed estimator in this case is easier, with the plot for $k \le 100$ being more conclusive for $\mathbb{g}_{k,n}$. This suggests improved robustness, particularly in handling the effects of censoring in the tail, since both estimators reduce to the usual moment estimator of \cite{moment} when no censoring is present. This robustness for the Weibull max-domain of attraction is also observed in the second simulation study above.
Thus, we highlight our estimator as a practical alternative, which in some cases provides stability and robustness, for real-world extreme value analysis under censoring. 

We finally notice that the proportion of non-censored observations is above $50\%$ only when considering the very top observations. Although our theoretical condition $\gamma_F>\gamma_G$ is only necessary for convergence of $\mathbb{g}_{k,n}$ as $k/n\to0$, in finite samples the estimator seems to be quite stable even when the proportion of non-censored observations dips below $50\%$. We believe that this restriction is not necessary to show consistency, and it is the subject of our future research.

\begin{figure}[htbp]
    \centering
    \includegraphics[width=1\textwidth,trim={0cm 0cm 0cm 0cm},clip]{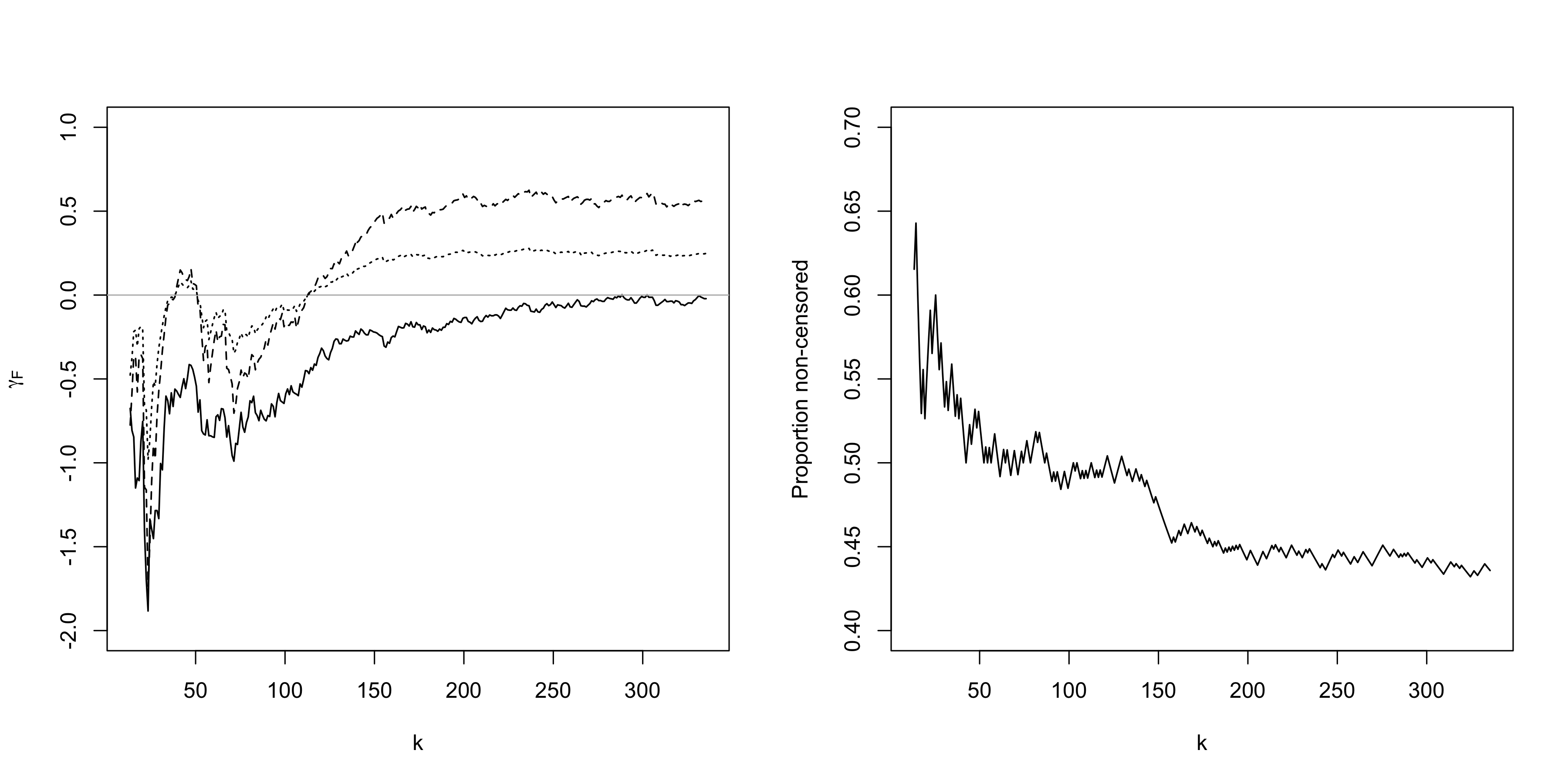}
    \caption{Brain cancer dataset. Left panel: estimators $\mathbb{g}_{k,n}$ (solid) and $\widehat{\gamma}^M_{k}$ (dashed) as a function of $k$, where the sample size is $n=1342$. For reference, we also include the usual moment estimator (dotted), given by either of the two previous estimators when setting all censoring indicators to one. Right panel: proportion of non-censored observations given by $(1/k)\sum_{i=1}^k \delta_{[n-i+1:n]}$.} 
    \label{fig:bmb}
\end{figure}

\newpage

%%%%%%%%%%%%%%%%%%%%%%%%%%%%%%%%%%%%%%%%%%%%%%
%% Example with single Appendix:            %%
%%%%%%%%%%%%%%%%%%%%%%%%%%%%%%%%%%%%%%%%%%%%%%
\begin{appendix}

%\section{Technical definitions}\label{AppDef} 
\end{appendix}
\newpage 
%% if your bibliography is in bibtex format, uncomment commands:
\bibliographystyle{imsart-number} % Style BST file (imsart-number.bst or imsart-nameyear.bst)
\bibliography{main.bib}       % Bibliography file (usually '*.bib')

%% or include bibliography directly:
%\begin{thebibliography}{4}
%%%
%\bibitem{r1}
%\textsc{Billingsley, P.} (1999). \textit{Convergence of
%Probability Measures}, 2nd ed.
%Wiley, New York.
%
%\bibitem{r2}
%\textsc{Bourbaki, N.}  (1966). \textit{General Topology}  \textbf{1}.
%Addison--Wesley, Reading, MA.
%
%\bibitem{r3}
%\textsc{Ethier, S. N.} and \textsc{Kurtz, T. G.} (1985).
%\textit{Markov Processes: Characterization and Convergence}.
%Wiley, New York.
%
%\bibitem{r4}
%\textsc{Prokhorov, Yu.} (1956).
%Convergence of random processes and limit theorems in probability
%theory. \textit{Theory  Probab.  Appl.}
%\textbf{1} 157--214.
%\end{thebibliography}
 
\newpage
\setcounter{page}{1}
\pagenumbering{roman}
\thispagestyle{empty}
\medskip
\centerline{\large SUPPLEMENTARY MATERIAL: {CENSORED EXTREME VALUE ESTIMATION}}
\medskip
\centerline{BY MARTIN BLADT \& IGOR RODIONOV}

\section{{  Applications of generalized residual estimators}} \label{Sectionexamples}

In this section, we present some applications of Theorem~\ref{T1gen} and Theorem~\ref{T2gen}. The most natural candidate function is \[f(x,t) = \frac{U_F(xt)}{U_F(t)}\] which appears in Theorem~\ref{T1} and Lemma~\ref{L1_bis}. 

Thus, let $F\in\mathcal{D}(G_\gamma)$ and assume %for convenience 
in this section
that $\tau_F>0.$ By the remark below Lemma~\ref{L1_bis}, $g(x) = x^{\gamma_+},$ where $\gamma_+ = \max(\gamma, 0)$. Moreover, by Potter's bounds, for arbitrary $\delta_1, \delta_2>0$, there exists $T = T(\delta_1,\delta_2)$ such that for $t\ge T$
\[g_1(x): = \max\big((1-\delta_1) x^{\gamma_+ - \delta_2}, 1\big) \le f(x,t) \le (1+\delta_1) x^{\gamma_+ + \delta_2} =: g_2(x),\] where we select $\delta_2<\gamma$ for positive $\gamma.$ 

Next, if $\gamma>0$ then we can use the second-order Condition~\ref{secondorder}. To derive an analogous condition in case $\gamma\le 0,$ recall that $F\in \mathcal{D}(G_\gamma)$ is equivalent to
\begin{equation}\label{basic}\lim_{t\to\infty}\frac{U_F(tx) - U_F(t)}{a(t)} = h_\gamma(x), \quad x>0,\end{equation} for some positive function $a,$ see, e.g., Theorem 1.1.6 in~\cite{dehaan}. Thus, for $\gamma\le 0$ and $x>0$
\begin{equation}\label{fakesecondorder} \lim_{t\to\infty} \frac{U_F(tx)/U_F(t) - x^{\gamma_+}}{a(t)/U_F(t)} = h_\gamma(x),\end{equation} hence in \eqref{cond5} we can set \begin{equation}\label{tildeg}\tilde g(x) = x^{\gamma_+} h_{\hat\rho}(x)\end{equation} with $\hat \rho = \rho I(\gamma>0) + \gamma I(\gamma\le 0)$ and $\hat a$ equal to $a$ in Condition~\ref{secondorder} for $\gamma>0$ and $a/U$ in \eqref{fakesecondorder} for $\gamma\le 0,$ respectively. Next, the conditions of Lemma 4.6, \cite{segers}, are satisfied under Condition~\ref{secondorder} for $\gamma>0$ and \eqref{fakesecondorder} for $\gamma\le 0,$ respectively, and thus in \eqref{cond6} we can set $\hat g(x) = K x^{\gamma_+ + \delta}$ for some $K>0,$ $\delta>0$ and $t\ge T.$ 
Finally, since $\hat a$ is regularly varying in our case, it suffices to require $\sqrt{k} \hat a(n/k) \to \lambda$ instead of $\sqrt{k} \hat a(\xi_{n-k,n}) \stackrel{P}{\to}\lambda$ by Smirnov's lemma (see, e.g., Lemma 2.2.3 in~\cite{dehaan} or \eqref{smirnov}).

The following result is an immediate consequence of Theorem~\ref{T1gen} and Theorem~\ref{T2gen}.

\begin{corollary}\label{Cor1}\, \begin{enumerate} \item[(i)] Let $F\in \mathcal{D}(G_\gamma)$ with $\gamma\in \amsmathbb{R}$ and $\tau_F>0$, and continuous and eventually monotone $\vartheta : [1, \infty) \to \amsmathbb{R}$ satisfy $\amsmathbb{E}\big[|\vartheta (\xi^{\gamma_+ + \delta})|^{1+\delta}\big] <\infty$ for some $\delta>0,$ 
and $\xi\sim \text{Pareto}(1).$ Assume $\mu(t),$ $k$ and $\xi_1, \xi_2, \ldots$ are as in Theorem~\ref{T1}. Then $R_{k,n}$ is consistent for $\amsmathbb{E}\big[\vartheta (\xi^{\gamma_+})\big].$ If, moreover, $\amsmathbb{E}\big[|\vartheta (\xi^{\gamma_+ + \delta})|^{2+\delta}\big] <\infty$ for some $\delta>0,$ then
\begin{equation*}\label{segers4.1v2_plus} \sqrt{k}\big(R_{k,n} - \mu(\xi_{n-k,n})\big) \stackrel{d}{\to} N(0, {\rm var} [\vartheta (\xi^{\gamma_+})]).\end{equation*}
\item[(ii)] Assume additionally Condition~\ref{secondorder} for $\gamma>0,$ $\sqrt{k}\hat a(n/k)\to\lambda$ with finite $\lambda$ for $\hat a$ defined above, and $\vartheta^\prime$ is continuous and eventually monotone.
Finally, assume $\amsmathbb{E}\big[\xi^{\gamma_+ +\delta} |\vartheta ^\prime(\xi^{\gamma_+ +\delta})|\big]<\infty$ for some $\delta>0.$
Then
\begin{equation*}\label{segers4.1v3_plus} \sqrt{k}\big(R_{k,n} - \amsmathbb{E}\big[\vartheta (\xi^{\gamma_+})\big]\big) \stackrel{d}{\to} N(C \lambda, {\rm var} [\vartheta (\xi^{\gamma_+})]),\end{equation*} where $C = \amsmathbb{E}\big[\tilde g(\xi) \vartheta ^\prime(\xi^{\gamma_+})\big]$ and $\tilde g$ is defined by \eqref{tildeg}.
\end{enumerate}
\end{corollary}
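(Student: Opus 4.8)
The plan is to derive Corollary \ref{Cor1} by specializing Theorem \ref{T1gen} and Theorem \ref{T2gen} to the candidate function $f(x,t) = U_F(xt)/U_F(t)$, for which every structural object of Definition \ref{D5} and Condition \ref{second_order_2} has just been identified above: $g(x) = x^{\gamma_+}$; the Potter-type envelopes $g_1(x) = \max\big((1-\delta_1)x^{\gamma_+ - \delta_2},1\big)$ and $g_2(x) = (1+\delta_1)x^{\gamma_+ + \delta_2}$; $\tilde g(x) = x^{\gamma_+}h_{\hat\rho}(x)$ with $\hat\rho = \rho\,I(\gamma>0) + \gamma\,I(\gamma\le 0)$; $\hat a$ equal to $a$ from Condition \ref{secondorder} when $\gamma>0$ and to $a/U_F$ from \eqref{fakesecondorder} when $\gamma\le0$; and $\hat g(x) = Kx^{\gamma_+ + \delta}$, valid for $t\ge T$ by Lemma 4.6 of \cite{segers}. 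Since $\tau_F>0$, we have $U_F(t)>0$ for $t$ large, so the indicator in $\mu(t)$ of Theorem \ref{T1} is eventually one; hence $\mu$ agrees with $\mu_f$ of \eqref{muf} near infinity, $R^f_{k,n}$ of \eqref{gre} agrees with $R_{k,n}$ of \eqref{standardresidual} for $n$ large, and $\mu_f(\infty) = \amsmathbb{E}[\vartheta(g(\xi))] = \amsmathbb{E}[\vartheta(\xi^{\gamma_+})]$.

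For part (i), I would translate the hypothesis $\amsmathbb{E}\big[(\bar\vartheta(\xi^{\gamma_+ + \delta}))^{1+\delta}\big]<\infty$ into condition \eqref{cond4}. Choosing the Potter exponents so that $\delta_2<\delta$ and $(1+\delta_1)x^{\gamma_+ + \delta_2}\le x^{\gamma_+ + \delta}$ for $x$ large, one bounds $\max\{\bar\vartheta(g_1(\xi)),\bar\vartheta(g_2(\xi))\}$ by $\bar\vartheta(g_2(\xi))\le\bar\vartheta(\xi^{\gamma_+ + \delta})$ for large $\xi$ when $\bar\vartheta$ is eventually non-decreasing, and symmetrically by $\bar\vartheta(g_1(\xi))$ when it is eventually non-increasing; the contribution of the bounded range of small $\xi$ is harmless by continuity of $\bar\vartheta$. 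This gives \eqref{cond4}, and under the strengthened hypothesis with exponent $2+\delta$ it gives \eqref{cond4} with some $\delta'>1$. Consistency of $R_{k,n}$ for $\amsmathbb{E}[\vartheta(\xi^{\gamma_+})]$ is then the first assertion of Theorem \ref{T1gen}, and the central limit theorem with limiting variance ${\rm var}[\vartheta(g(\xi))]={\rm var}[\vartheta(\xi^{\gamma_+})]$ is its second assertion.

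For part (ii) I would invoke Theorem \ref{T2gen}. Condition \ref{second_order_2} holds by Condition \ref{secondorder} when $\gamma>0$ (the additional hypothesis of part (ii)) and is automatic from $F\in\mathcal{D}(G_\gamma)$ via \eqref{fakesecondorder} when $\gamma\le0$, with the $\tilde g$, $\hat a$, $\hat g$ recorded above; the moment condition \eqref{cond8} reduces, by the same eventual-monotonicity argument as in part (i) now applied to the envelope $\bar{\vartheta^\prime}$ together with $\hat g(x)=Kx^{\gamma_+ + \delta}$, to the stated hypothesis $\amsmathbb{E}\big[\xi^{\gamma_+ + \delta}\,\bar{\vartheta^\prime}(\xi^{\gamma_+ + \delta})\big]<\infty$; and since $\hat a$ is regularly varying, Smirnov's lemma (Lemma 2.2.3 in \cite{dehaan}) lets us replace the random argument $(1-F(X_{n-k,n}))^{-1}$, which is asymptotically equivalent to $\xi_{n-k,n}$ and hence to $n/k$ in probability, by $n/k$, so that $\sqrt{k}\,\hat a(n/k)\to\lambda$ delivers the hypothesis $\sqrt{k}\,\hat a\big((1-F(X_{n-k,n}))^{-1}\big)\stackrel{P}{\to}\lambda$ of Theorem \ref{T2gen}. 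Conclusion \eqref{T2genassertion} then reads $\sqrt{k}\big(R_{k,n}-\amsmathbb{E}[\vartheta(\xi^{\gamma_+})]\big)\stackrel{d}{\to}N\big(C\lambda,{\rm var}[\vartheta(\xi^{\gamma_+})]\big)$ with $C=\amsmathbb{E}[\tilde g(\xi)\vartheta^\prime(g(\xi))]=\amsmathbb{E}[\tilde g(\xi)\vartheta^\prime(\xi^{\gamma_+})]$, as claimed.

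The only genuine work is the bookkeeping of the previous two paragraphs: matching the abstract envelope and moment conditions \eqref{cond4} and \eqref{cond8} with the explicit power-type moment conditions of the statement, which forces the simultaneous choice of small Potter exponents $\delta_1,\delta_2$ relative to $\delta$ and a separate treatment of the two monotonicity directions of $\bar\vartheta$ and $\bar{\vartheta^\prime}$. No new ideas are needed beyond those already displayed above, so once these verifications are carried out the corollary is an immediate citation of Theorems \ref{T1gen} and \ref{T2gen}.
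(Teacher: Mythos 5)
Your proof is correct and matches the paper's own route: the paper declares the corollary an immediate consequence of Theorems \ref{T1gen} and \ref{T2gen} once the candidate function $f(x,t)=U_F(xt)/U_F(t)$, the envelopes $g_1,g_2$ from Potter's bounds, and the second-order objects $\tilde g$, $\hat a$, $\hat g$ are identified in the preceding paragraphs. You spell out the bookkeeping (matching \eqref{cond4} and \eqref{cond8} with the power-type moment hypotheses, treating the two monotonicity directions of the envelope, and invoking Smirnov's lemma to replace $\xi_{n-k,n}$ by $n/k$) that the paper leaves implicit, but the argument is the same.
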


\begin{remark} In the above, it is always possible to find an intermediate sequence $k = k_n$ such that $\sqrt{k}\hat a(n/k)\to\lambda$ is satisfied. Indeed, if $\gamma>0$ then $\hat a(t)\to 0$ as $t\to\infty$ by  definition (see Condition~\ref{secondorder}), and if $\gamma\le0$ then $\hat a(t)\to 0$ as $t\to\infty$ by Lemma 1.2.9, \cite{dehaan}. \end{remark}

Note again that to estimate the extreme value index, Corollary~\ref{Cor1} is not enough since for negative $\gamma$, $R_{k,n}$ is a consistent estimator of $\vartheta (1)$, which itself does not depend on $\gamma.$ \\

Consequently, we now consider another candidate function $f(x,t).$ As a starting point, note that it follows from \eqref{basic} that
\begin{equation}\label{formoment}\lim_{t\to\infty} \frac{\log U_F(tx) - \log U_F(t)}{a(t)/U_F(t)} = h_{\gamma_-}(x)\end{equation} with $\gamma_- = \min(\gamma,0)$ and the same $a(t)$ as in \eqref{basic}; see, e.g., formula (3.5.4) in~\cite{dehaan}. The limit \eqref{formoment} implies that $\log U_F(t)$ is {\it extended regularly varying} with index $\gamma_-,$ see the properties of such functions in section B.2, \cite{dehaan}. The relation \eqref{formoment} has previously been used to establish the asymptotic behavior of the moment estimator of EVI; see~\cite{moment}. Thus, let us consider the candidate function
\[f(x,t) = \frac{\log U_F(tx) - \log U_F(t)}{a(t)/U_F(t)},\] and therefore $g(x) = h_{\gamma_-}(x).$ For $g_1$ and $g_2$ in \eqref{cond2} one can use $0$ and $x\mapsto\delta x^\delta$ for some $ \delta\in(0,1)$ if $\gamma\ge 0$ by Corollary B.2.10, \cite{dehaan}, and \begin{equation}x\mapsto\gamma^{-1} \big(\max[(1\mp\delta) x^{\gamma\mp\delta} - (1 \pm \delta), 0]\big)\label{g1g2}\end{equation} for some $\delta\in(0, -\gamma),$ if $\gamma<0$ by Theorem B.2.2 ibid. and Potter's bounds, respectively.

 {Recall that the second-order condition for $\log U_F(t)$ is given by \eqref{secondorderlog}.} %reads as follows
%\begin{equation}\label{secondorderlog}\lim_{t\to\infty}\frac{\frac{\log U_F(tx) - \log U_F(t)}{a(t)/U_F(t)} - h_{\gamma_-}(x)}{Q(t)} = \int_{1}^x s^{\gamma_--1}\int_{1}^s u^{\rho^\prime-1} \dd u \dd s=: H_{\gamma_-, \rho^\prime}(x),\end{equation}
%for $ x>0$, some $\rho^\prime\le 0$ and $Q(t)$ positive or negative with $Q(t) \to 0, t \to \infty,$ see, e.g., formula (3.5.11) in~\cite{dehaan}.  Here, the function $H_{\gamma, \rho}$ can be re-expressed as
%\begin{equation}\label{hgammarho}H_{\gamma, \rho}(x) = \frac{1}{\rho}\left(\frac{x^{\gamma+\rho}-1}{\gamma+\rho} - \frac{x^\gamma - 1}{\gamma}\right),\end{equation} which for the cases $\gamma = 0$ and $\rho = 0$ is understood to be equal to the limit of \eqref{hgammarho} as $\gamma\to 0$ or $\rho \to 0,$ respectively.
Note that \eqref{secondorderlog} follows from the second-order condition, which is standard for extreme value theory, \begin{equation}\label{standardsecondorder} \lim_{t\to\infty}\frac{\frac{U_F(tx) - U_F(t)}{a(t)} - h_{\gamma}(x)}{A(t)} =  H_{\gamma, \rho}(x), \end{equation} if $\gamma\neq \rho,$ see, e.g., formula (3.5.10) in~\cite{dehaan}, where the relation between $Q$ and $A$ is stated by formula (3.5.12) ibid. and $\rho^\prime$ is the function of $\gamma$ and $\rho$ defined in Lemma B.3.16 ibid.

Hence, \eqref{secondorderlog} implies $\tilde g = H_{\gamma_-, \rho^\prime}.$ Since $\gamma_-$ and $\rho^\prime$ are non-positive, one can derive similarly to Lemma 4.6, \cite{segers}, for some $K>0$ and $\delta>0$
\begin{equation}\left|\frac{\log U_F(tx) - \log U_F(t)}{a(t)/U_F(t)} - h_{\gamma_-}(x)\right| \le K |Q(t)|\, x^{\gamma_- + \delta}, \quad x\ge 1, t\ge T.\label{gbar}\end{equation} Clearly, $\delta$ in \eqref{gbar} can be chosen such that $\gamma + \delta<0$ for negative $\gamma.$ Finally, $Q(t)$ appearing in \eqref{secondorderlog} is regularly varying with index $\rho^\prime$ by Theorem B.3.1, \cite{dehaan}, therefore as above we can require $\sqrt{k} Q(n/k) \to \lambda$ instead of $\sqrt{k} Q(\xi_{n-k,n}) \to \lambda.$

Consider the statistic
\[T_{k,n} = \frac{1}{k}\sum_{i=1}^k \vartheta \left(\frac{\log X_{n-i+1,n} - \log X_{n-k,n}}{a(U^{-1}(X_{n-k,n}))/X_{n-k,n}}\right)\] for some continuous function $\vartheta $ and denote
\[\mu_f(t) =  \amsmathbb{E}\, \vartheta \left(\frac{\log U_F(t\xi) - \log U_F(t)}{a(t)/U_F(t)}\right),\] where $\xi\sim \text{Pareto}(1)$ as before. 

The following result is a consequence of Theorem~\ref{T1gen} and Theorem~\ref{T2gen}.
\begin{corollary}\label{Cor2} \, \begin{enumerate} \item[(i)] Let $F\in \mathcal{D}(G_\gamma)$ with $\gamma\in \amsmathbb{R}$ and $\tau_F>0.$ Let $\vartheta : [0, \infty) \to \amsmathbb{R}$ be continuous and eventually monotone, and if $\gamma \ge 0,$ satisfy $|\vartheta (x)|\le A\max(x^c,1),$ $x\ge 0,$ for some $A>0$ and $c>0.$ Assume $k$ and $\xi_1, \xi_2, \ldots$ are as in Theorem~\ref{T1}. Then $T_{k,n}$ is consistent for $\amsmathbb{E}\big[\vartheta (h_{\gamma_-}(\xi))\big]$ and
\begin{equation*}\label{segers4.1v2_minus} \sqrt{k}\big(T_{k,n} - \mu_f(\xi_{n-k,n})\big) \stackrel{d}{\to} N\big(0, {\rm var} \big[\vartheta (h_{\gamma_-}(\xi))\big]\big).\end{equation*}
\item[(ii)] Assume additionally \eqref{secondorderlog}, $\sqrt{k}Q(n/k)\to\lambda$ with finite $\lambda$ for $Q$ defined in \eqref{secondorderlog}, and continuous and eventually monotone $\vartheta^\prime$ is such that $|\vartheta^\prime(0)|<\infty.$ 
Finally, if $\gamma \ge 0,$ assume $|\vartheta^\prime(x)|\le A\max(x^c,1),$ $x\ge 0,$ for some $A>0$ and $c>0.$ Then
\begin{equation*}\label{segers4.1v3_minus} \sqrt{k}\big(T_{k,n} - \amsmathbb{E}\big[\vartheta (h_{\gamma_-}(\xi))\big]\big) \stackrel{d}{\to} N\big(C\lambda, {\rm var} \big[\vartheta (h_{\gamma_-}(\xi))\big]\big),\end{equation*} where $C = \amsmathbb{E}\big[H_{\gamma_-,\rho^\prime}(\xi) \vartheta ^\prime(h_{\gamma_-}(\xi))\big]$ and $H_{\gamma_-, \rho^\prime}$ is defined by \eqref{secondorderlog}.
\end{enumerate}
\end{corollary}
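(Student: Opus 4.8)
\textbf{Proof proposal for Corollary \ref{Cor2}.}
The plan is to recognise $T_{k,n}$ as the generalized residual estimator $R^f_{k,n}$ of \eqref{gre} for the specific candidate function
$f(x,t)=\bigl(\log U_F(tx)-\log U_F(t)\bigr)\big/\bigl(a(t)/U_F(t)\bigr)$, and then simply invoke Theorem \ref{T1gen} for part (i) and Theorem \ref{T2gen} for part (ii), since all the auxiliary objects ($g$, $g_1,g_2$, $\tilde g$, $\hat g$, $\hat a$) have already been identified in the discussion preceding the statement. First I would record the identification: writing $X_i=U_F(\xi_i)$ with $\{\xi_i\}$ i.i.d.\ Pareto$(1)$, order statistics are preserved, so $X_{n-i+1,n}=U_F(\xi_{n-i+1,n})$ and $U^{-1}(X_{n-k,n})=\xi_{n-k,n}$; hence $T_{k,n}=k^{-1}\sum_{i=1}^k\vartheta\bigl(f(\xi_{n-i+1,n}/\xi_{n-k,n},\xi_{n-k,n})\bigr)=R^f_{k,n}$ and the $\mu_f$ of the statement coincides with \eqref{muf}. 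Since $\tau_F>0$ gives $U_F(t)>0$ for large $t$, $f$ is well defined; for $x\ge1$ it is nonnegative, and \eqref{formoment} yields $f(x,t)\to h_{\gamma_-}(x)=:g(x)$, which is the limit \eqref{cond1}. The envelopes $g_1,g_2$ in \eqref{cond2} are exactly those displayed above: $g_1\equiv0$, $g_2(x)=\delta x^\delta$ with $\delta\in(0,1)$ for $\gamma\ge0$ (Corollary B.2.10 in \cite{dehaan}), and the bounded functions \eqref{g1g2} for $\gamma<0$ (Theorem B.2.2 ibid.\ together with Potter's bounds).

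For part (i) it remains to check the moment condition \eqref{cond4}. When $\gamma<0$, $g_1,g_2$ are bounded and $\vartheta$ continuous, so the integrand in \eqref{cond4} is bounded and the condition holds for every exponent. When $\gamma\ge0$, the growth bound $|\vartheta(x)|\le A\max(x^c,1)$ gives $\bar\vartheta(g_2(\xi))\le A\max((\delta\xi^{\delta})^c,1)$, and since $\amsmathbb{E}[\xi^p]<\infty$ for Pareto$(1)$ whenever $p<1$, choosing the Potter exponent $\delta$ small enough makes $\amsmathbb{E}\big[\max\{\bar\vartheta(g_1(\xi)),\bar\vartheta(g_2(\xi))\}\big]^{1+\delta'}<\infty$ for any prescribed $\delta'$. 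Letting $\delta'\downarrow0$ gives consistency of $R^f_{k,n}=T_{k,n}$ for $\amsmathbb{E}[\vartheta(g(\xi))]=\amsmathbb{E}[\vartheta(h_{\gamma_-}(\xi))]$, and taking $\delta'>1$ gives \eqref{T1genassertion}, i.e.\ the claimed central limit theorem with asymptotic variance ${\rm var}[\vartheta(h_{\gamma_-}(\xi))]$.

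For part (ii) I would verify Condition \ref{second_order_2}: \eqref{secondorderlog} is precisely \eqref{cond5} with $\hat a=Q$ and $\tilde g=H_{\gamma_-,\rho^\prime}$, while \eqref{gbar} is \eqref{cond6} with $\hat g(x)=Kx^{\gamma_-+\delta}$, where for $\gamma<0$ one chooses $\delta$ with $\gamma+\delta<0$. The centering-rate hypothesis $\sqrt{k}\,\hat a\bigl((1-F(X_{n-k,n}))^{-1}\bigr)\stackrel{P}{\to}\lambda$ reduces to $\sqrt{k}\,Q(n/k)\to\lambda$: under the coupling $(1-F(X_{n-k,n}))^{-1}=\xi_{n-k,n}$, Smirnov's lemma for intermediate order statistics of Pareto$(1)$ gives $\xi_{n-k,n}/(n/k)\stackrel{P}{\to}1$, and $Q$ is regularly varying of index $\rho^\prime$ (Theorem B.3.1 in \cite{dehaan}), so $Q(\xi_{n-k,n})/Q(n/k)\stackrel{P}{\to}1$. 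Condition \eqref{cond8} is automatic for $\gamma<0$ (bounded integrand) and, for $\gamma\ge0$, follows from $|\vartheta^\prime(x)|\le A\max(x^c,1)$, $|\vartheta^\prime(0)|<\infty$, and a Pareto-moment estimate with $\hat g(x)=Kx^{\delta}$, again choosing $\delta$ small. Theorem \ref{T2gen} then delivers \eqref{T2genassertion} with bias constant $C=\amsmathbb{E}[\tilde g(\xi)\vartheta^\prime(g(\xi))]=\amsmathbb{E}[H_{\gamma_-,\rho^\prime}(\xi)\vartheta^\prime(h_{\gamma_-}(\xi))]$, exactly as claimed.

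The step requiring the most attention — though not a real obstacle — is the case $\gamma\ge0$, where one must pick the various Potter exponents simultaneously so that all the finitely many moment requirements (\eqref{cond4} with $\delta'>1$, and \eqref{cond8}) hold at once against the Pareto$(1)$ law; since each holds for all sufficiently small exponents, a single small choice works. Everything else is a substitution into Theorems \ref{T1gen}–\ref{T2gen} using the facts already assembled in this section.
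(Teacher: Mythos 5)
Your proposal is correct and takes essentially the same route as the paper: identify $T_{k,n}$ with the generalized residual estimator $R^f_{k,n}$ for $f(x,t)=\bigl(\log U_F(tx)-\log U_F(t)\bigr)/(a(t)/U_F(t))$, note that $\mu_f$ in the statement matches \eqref{muf}, and then invoke Theorems \ref{T1gen} and \ref{T2gen} after checking \eqref{cond1}–\eqref{cond2}, \eqref{cond4}, Condition \ref{second_order_2} via \eqref{secondorderlog} and \eqref{gbar}, and \eqref{cond8}; the passage from $\sqrt{k}\,Q(\xi_{n-k,n})\to\lambda$ to $\sqrt{k}\,Q(n/k)\to\lambda$ via Smirnov's lemma and regular variation of $Q$ is exactly the observation made just before the corollary. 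The paper simply records the ingredients in Section \ref{Sectionexamples} and states the corollary as "a consequence of Theorem \ref{T1gen} and Theorem \ref{T2gen}"; you have unpacked that verification correctly, including the slightly delicate point for $\gamma\ge 0$ that the Potter exponent $\delta$ in $g_2(x)=\delta x^\delta$ and $\hat g(x)=Kx^{\gamma_-+\delta}$ can be chosen small enough that the finitely many moment requirements against a Pareto$(1)$ law hold simultaneously for any fixed polynomial growth rate $c$ of $\vartheta$ and $\vartheta'$.
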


{  In the same way that Corollary~\ref{Cor1} did not yield a suitable estimators when operating in the $\gamma<0$ domain, Corollary~\ref{Cor2} is again not entirely suitable for all max-domains. Although it succeeds for when $\gamma\le0$, the $\gamma>0$ domain fails, since then $T_{k,n}$ converges in probability to $\amsmathbb{E}[\vartheta(\eta)],$ where $\eta\sim \mbox{Exp}(1).$ However, the tools developed in this section are sufficient -- upon suitable combination -- for the $\gamma\in \amsmathbb{R}$ case. 

%Next, we link the theory of generalized residual estimators to generalized EKM integrals for all max-domains of attraction in Section~\ref{sec:exten}. Thereafter, in Section~\ref{sec:evi_est}, the precise method of combination yields as an example the moment estimator  of $\gamma\in\amsmathbb{R}$ itself.
%\cite{moment}, and subsequently taking the censoring effects into account %which we develop in
}

\newpage
\section{Proofs of Section~\ref{sec:ekmi}}\label{AppA} %% if no title is needed, leave empty \section*{}.
%Appendices should be provided in \verb|{appendix}| environment,
%before Acknowledgements.
%
%If there is only one appendix,
%then please refer to it in text as \ldots\ in the \hyperref[appn]{Appendix}.
\subsection*{Notation} 
{ First, define the random vectors $\{(V^t_i, \delta^t_i)\}_{i=1}^k$ such that \begin{equation}\label{bardk}\bar D^t_k := \left(\binom{V^t_{1,k}}{\delta^t_{[1:k]}},\dots,\binom{V^t_{k,k}}{\delta^t_{[k:k]}}\right)\stackrel{d}{=}\left( \binom{Z_{n-k+1,n}/Z_{n-k,n}}{\delta_{[n-k+1:n]}},\ldots, \binom{Z_{n,n}/Z_{n-k,n}}{\delta_{[n:n]}} \right)\end{equation} given $Z_{n-k,n}=t,$ $k<n,$ where $\{\delta^t_{[i:k]}\}_{i=1}^k$ denote concomitants of $\{\delta^t_i\}_{i=1}^k.$}

The following notation provides the classical big-o and small-o notation adapted to uniformly control the tail of extreme value estimators. It is used throughout the proofs.

\begin{definition}
{ For the random vectors $\bar D^t_k$ introduced above,} %$\{V^t_i\}_{i=1}^k$ introduced in Section~\ref{evt} and $\{\delta^t_i\}_{i=1}^k$ such that $$(V^t_{1,k},\dots,V^t_{k,k})\stackrel{d}{=}( Z_{n-k+1,n}/Z_{n-k,n},\ldots , Z_{n,n}/Z_{n-k,n})$$ given $Z_{n-k,n}=t,$ 
an array of functions $(f_{k,n})_{k,n\in\amsmathbb{N}}$, and a positive deterministic sequence $a_1,a_2,\ldots$ we say that $f_{k,n}(\bar{D}^t_k)$ is $o_{{P}}(a_k)$ uniformly for large $t$ %w.r.t. $Z_{n-k,n}$
if for every $\epsilon>0$,
\begin{align}
\lim_{n\to\infty} 
\limsup\limits_{t\rightarrow\infty}\pr(|f_{k,n}(\bar{D}^t_k)/a_k|>\epsilon) = 0.
\end{align} 
We write for short (when no confusion arises) that $f_{k,n}(\bar{D}^t_k)=\overline{o}_{P}(a_k)$. Analogously $f_{k,n}(\bar{D}^t_k)=\overline{O}_{P}(a_k)$ using the notions of bounded in probability in place of convergence to zero in probability.
\end{definition}

Notice that according to \eqref{bardk}, $r_{k,n} = r_{k,n}\big((Z^\ast_i, \delta_i)_{i=1}^k\big)$ defined in Theorem~\ref{T4} is equal in distribution to $r^t_{k,n} := r_{k,n}(\bar{D}^t_k)$ given $Z_{n-k,n} = t.$ Then, since the latter relation is a distributional property, Theorem~\ref{T4} states that $r^t_{k,n} = \overline{o}_{P}(k^{-1/2}).$  %Thus, in what follows, we regard $r_{k,n}$ through the latter construction.

\begin{proof}[Proof of Theorem~\ref{T4}] 
The following condition serves as a building block towards the general case.

\begin{condition}\label{cond:bounded_phi}
The function $\varphi$ satisfies
\begin{equation}\label{infinite}\varphi(x) = 0 \:\: \text{ for all }\:\: x>T \:\:\text{ and some }\:\: T< \infty.\end{equation}
\end{condition}

\begin{theorem}[Decomposition, truncated case]\label{theo:Rkn_term}
Let $\varphi$ be continuous and
%have envelope $\bar\varphi$ 
satisfy Condition~\ref{cond:bounded_phi}. Assume the conditions imposed on $F$ and $G$ in Theorem~\ref{T4}. Let $k=k_n$ be an intermediate sequence. Then the random sequence $r^t_{k,n}$ %from \eqref{T4new}
is $\overline{o}_{P}(k^{-1/2})$. 
\end{theorem}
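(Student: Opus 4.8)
The plan is to deduce Theorem~\ref{theo:Rkn_term} from a uniform-in-$t$ version of Theorem~1.1 in \cite{stute}. First I would make the conditional reduction explicit. By the representation \eqref{bardk}, given $Z_{n-k,n}=t$ the vector of normalized upper order statistics together with their concomitants is distributed as an ordered i.i.d.\ sample $\bar D^t_k$ of size $k$ drawn from the pair $(V^t,\delta^t)$ with marginal $H^t$ and subdistributions $H^{0,t},H^{1,t}$. Comparing \eqref{kaplanmeierextreme} with \eqref{kaplanmeyer} after the reindexing $j\leftrightarrow k-i+1$ (which matches the factor with denominator $i$ to the classical factor with denominator $k-j+1$ and concomitant $\delta_{[n-i+1:n]}$ to the $j$-th concomitant), one sees that $\amsmathbb{F}_{k,n}$ given $Z_{n-k,n}=t$ is \emph{exactly} the classical Kaplan--Meier product-limit estimator built from $\bar D^t_k$. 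Hence $S_{k,n}(\varphi)$ given $Z_{n-k,n}=t$ is the classical Kaplan--Meier integral of $\varphi$ for that sample, and the sequence $r_{k,n}$ of \eqref{T4new} is precisely the Stute remainder $R_k=R_k(\bar D^t_k)$ for the distribution $H^t$ and the integrand $\varphi$, with $\gamma_j$ there replaced by $\gamma_j^t$. By the definition of $\overline{o}_P$ it then suffices to prove that for every $\epsilon>0$, $\limsup_{t\to\infty}\pr\big(k^{1/2}|R_k(\bar D^t_k)|>\epsilon\big)\to 0$ as $n\to\infty$.

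The second step is to set up the uniform control that makes the constants in Stute's estimates independent of $t$. Since $F,G$ (hence $H$) are regularly varying and eventually continuous, the uniform convergence theorem \cite{bingham} gives $H^t\to H^\circ$, $H^{0,t}\to H^{0,\circ}$, $H^{1,t}\to H^{1,\circ}$ uniformly on $[1,\infty)$ as $t\to\infty$, where the limits correspond to the Pareto tails $F^\circ,G^\circ$; moreover $H^t$ has no atoms for all large $t$. As $\varphi$ is continuous and vanishes on $(T,\infty)$, we may take $\bar\varphi=|\varphi|$, a bounded function supported on $[1,T]$; and since $1-H^\circ(v)=v^{-1/\gamma_H}$ is bounded away from $0$ on the compact $[1,T]$, the quantity $\inf_{v\in[1,T]}(1-H^t(v))$ is bounded below by a positive constant uniformly in large $t$. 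Combining these facts shows that, for $t$ large, the functions $\gamma_0^t,\gamma_1^t,\gamma_2^t$ defined by \eqref{gamma0}--\eqref{gamma2} (with $H,H^0,H^1$ replaced by $H^t,H^{0,t},H^{1,t}$) are uniformly bounded, converge uniformly to $\gamma_0^\circ,\gamma_1^\circ,\gamma_2^\circ$, and are effectively supported on $[1,T]$; in particular the integrand $\varphi(V^t)\gamma_0^t(V^t)\delta^t+\gamma_1^t(V^t)(1-\delta^t)-\gamma_2^t(V^t)$ and every kernel built from it are bounded by a constant not depending on $t$ for $t$ large. Note that Condition~\ref{envelope_moment} is not invoked here: for truncated $\varphi$ it holds trivially, which is why the truncated theorem is stated without it.

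The third step is to traverse the proof of Theorem~1.1 in \cite{stute}, where $R_k$ is written as a fixed finite sum of terms, each of which is either (i) a product of the supremum norm of an empirical (sub)distribution-function process of the $k$ observations --- which is $\overline{O}_P(k^{-1/2})$ with a constant depending only on the sup-norm of the relevant bounded integrand and on $\inf_{[1,T]}(1-H^t)$ --- times a factor that is $\overline{o}_P(1)$ or $\overline{O}_P(k^{-1/2})$, or (ii) a centered second-order degenerate U-statistic whose kernel is bounded by a constant multiple of the uniformly bounded quantities of the previous paragraph, hence has variance $O(k^{-1})$ and is $\overline{O}_P(k^{-1})$. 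In each case the tail bound for $\{k^{1/2}|R_k|>\epsilon\}$ can, by the uniform control already established, be taken independent of $t$ for all sufficiently large $t$; letting first $t\to\infty$ and then $n\to\infty$ (so that $k=k_n\to\infty$) makes every term vanish, and summing the finitely many contributions yields $r_{k,n}=\overline{o}_P(k^{-1/2})$.

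The main obstacle is precisely this last uniformity: Stute's Theorem~1.1 is stated for a single fixed distribution, so one must reopen its proof, isolate each remainder term, and verify that the quantities governing its size --- sup-norms of $\varphi$ on $[1,T]$, the positive number $\inf_{v\in[1,T]}(1-H^t(v))$, and the bounds on the second-order U-statistic kernels --- remain bounded as $t\to\infty$. All of this is supplied by the regular-variation-driven uniform convergence $H^t\to H^\circ$ (together with eventual continuity, which removes atoms and lets one use the simplified forms of the $\gamma_j^t$) and by the truncation of $\varphi$, which trivializes every moment hypothesis appearing in Stute's statement uniformly in $t$.
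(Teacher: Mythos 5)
Your proposal identifies the correct blueprint---condition on $Z_{n-k,n}=t$, reduce to a classical Kaplan--Meier integral, and re-derive Stute's estimates with the uniformity in $t$ supplied by regular variation and the truncation of $\varphi$---which is exactly the route the paper takes. However, two steps that you present as routine require dedicated arguments and cannot simply be read off from the general uniformity you set up in Step~2. First, the claim that the normalized upper order statistics together with their concomitants, given $Z_{n-k,n}=t$, form an ordered i.i.d.\ sample from $(V^t,\delta^t)$ is not a bookkeeping consequence of the notation \eqref{bardk}: the paper's Lemma~\ref{L1} proves this via a combinatorial argument over exceedance times, and your appeal to \eqref{bardk} tacitly presupposes the conclusion of that lemma.

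Second, and more substantively, your Step~3 treats every remainder term as a sup-norm of an empirical process times a bounded factor, but the $C_{ik}^{t2}$-type terms involve $\int_0^{U_i-}\frac{1}{1-x}\,(Q_k^0-Q^0)(\dd x)$, where $Q_k^0(x)=\frac{1}{k}\sum_i I(U_i\le x,\delta_i^t=0)$ is an empirical \emph{sub}distribution function that depends on $t$ through $\{\delta_i^t\}$. A Kolmogorov/DKW sup-norm bound on $Q_k^0-Q^0$ alone does not control this stochastic integral uniformly in $t$; the paper's proof introduces the auxiliary variables $\bar U_i=U_i(1-\delta_i^t)+\delta_i^t(1-\varepsilon_i)$ to convert $Q_k^0$ into a genuine empirical cdf on $[0,T_0]$, and a step-function discretization $f_k$ of $1/(1-x)$ to reduce the integral to a finite, $t$-free linear combination of increments of $\bar Q_k-\bar Q$. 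Without a device of this kind the uniform $\overline{O}_{P}(k^{-1/2})$ bound for $\sup_i|C_{ik}^{t}|$ does not follow from the regular-variation uniformity alone, and your proof of the $S_{k,n}^2$ part of the decomposition has a gap. (A small slip elsewhere: the degenerate second-order U-statistic $U_{k,n}-\widehat U_{k,n}$ has variance of order $k^{-2}$, not $k^{-1}$, which is precisely what gives $\overline{O}_{P}(k^{-1})$.)
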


\begin{proof}[Proof of Theorem~\ref{theo:Rkn_term}]

 We can find i.i.d. random variables $\{V^t_i\}_{i=1}^k$ such that the joint distribution of the set of their order statistics $(V^t_{1,k}, \ldots, V^t_{k,k})$ equals the joint distribution of the set $(Z_{n-k+1,n}/Z_{n-k,n}, \ldots, Z_{n,n}/Z_{n-k,n})$ given $Z_{n-k,n} = t,$ which follows by slight modification of lemma 3.4.1 in \cite{dehaan}. But it is not evident whether the random variables $\{\delta^t_i\}_{i=1}^k$ introduced at the beginning of this section %in Theorem~\ref{theo:Rkn_term} 
are i.i.d. as well. The following lemma answers this question.
\begin{lemma}\label{L1} The sequence of vectors $\{(V_i^t, \delta^t_i)\}_{i=1}^k$ 
 is i.i.d. \end{lemma}
\begin{proof} Since the distribution of $\{(Z_i,\delta_i)\}_{i=1}^n$ is completely defined by the distribution of $\{(X_i, Y_i)\}_{i=1}^n,$ let us consider the latter. Let $\{T_i\}_{i=1}^k$ be the times of exceedances over $t$ by the sequence $\{Z_i\}_{i=1}^n,$ %given $Z_{n-k,n} = t,$ 
that is 
\[T_i = \min\{j>T_{i-1}: \; Z_j > t\}, \quad T_0 = 0.\]
Clearly, the total number of $T_i$ is equal to $k$ given $Z_{n-k,n} = t.$ Consider the joint distribution of the pairs $\{(X_{T_i}, Y_{T_i})\}_{i=1}^k$ given $Z_{n-k,n} = t.$ First, the condition $Z_{n-k,n} = t$ means that $X_{T_i}\ge t,$ $Y_{T_i}\ge t,$ $i=1,\ldots,k,$ so it is easy to show that
\[\mathcal{L}\left(\left.\binom{X_{T_1}}{ Y_{T_1}}, \ldots, \binom{X_{T_k}}{ Y_{T_k}}\right| Z_{n-k,n} = t\right) = \]\[\mathcal{L}\left(\left.\binom{X_{T_1}}{ Y_{T_1}}, \ldots, \binom{X_{T_k}}{Y_{T_k}}\right| \binom{\text{ the number of exceedances is exactly } k} {\text{and } X_{T_i}\ge t, Y_{T_i}\ge t, i=1,\ldots,k}\right)\] Clearly, given that the number of exceedances is exactly $k,$ the vector $(T_1, \ldots, T_k)$ is uniformly distributed on the set of all k-tuples of $\{1,\ldots,n\},$ thus the latter distribution is equal to 
\[\mathcal{L}\left(\left.\binom{X_{1}}{ Y_{1}}, \ldots, \binom{X_{k}}{Y_{k}}\right| \binom{\text{ the number of exceedances is exactly } k} {\text{and } X_{i}\ge t, Y_{i}\ge t, i=1,\ldots,k}\right).\] The latter distribution is clearly independent of $n,$ thus it is equal to
\[\mathcal{L}\left(\left.\binom{X_{1}}{ Y_{1}}, \ldots, \binom{X_{k}}{Y_{k}}\right| X_{i}\ge t, Y_{i}\ge t, i=1,\ldots,k\right).\] Since $\{X_i\}$ and $\{Y_i\}$ are independent, this distribution coincides with
\[\mathcal{L}\left(\binom{X^t_{1}}{ Y^t_{1}}, \ldots, \binom{X^t_{k}}{Y^t_{k}}\right),\] where $\{X_i^t\}_{i=1}^k$ and $\{Y_i^t\}_{i=1}^k$ are two independent i.i.d. samples with cdfs $F^t(x/t)$ and $G^t(x/t),$ respectively. Since the distribution of $(Z_i, \delta_i)$ is completely determined by the distribution of $(X_i, Y_i),$ we can select $V_i^t = \min(X_i^t, Y_i^t)/t$ and $\delta_i^t = I(X_i^t\le Y_i^t).$
\end{proof}
Consider now the extreme Kaplan--Meier integral $S_{k,n}(\varphi),$ defined by \eqref{extremeKM}, given $Z_{n-k,n} = t.$ By \eqref{bardk} and Lemma~\ref{L1}, under this condition the extreme Kaplan--Meier estimator \eqref{kaplanmeierextreme} equals in distribution to
\[\amsmathbb{F}_{k,n}^t(x) =  1 - \prod_{i=1}^k\left[1 - \frac{\delta^t_{[i:k]}}{k-i+1}\right]^{I(V^t_{i,k}\le x)},\]
and replacing in the latter $k$ by $n,$ $\delta^t_{[i:k]}$ by $\delta_{[i:n]},$ and $V_{i,k}^t$ by $Z_{i,n}$ gives us exactly the classical Kaplan--Meier estimator \eqref{kaplanmeyer}. In particular, this implies that, for fixed $k$ and $n$, we may follow along the lines of the initial steps of the proof of Theorem 1.1 in~\cite{stute} (specifically, compare the below formula with (2.1) therein) to obtain 
\begin{align}\label{full} S_{k,n}(\varphi)\, {\stackrel{d}{=}}\, \frac{1}{k} \sum_{i=1}^k \varphi(V_i^t) \gamma^t_0(V_i^t) \delta^t_i [1 + B_{ik}^t + C_{ik}^t] + \frac{1}{k} \sum_{i=1}^k \frac{1}{2}\varphi(V_i^t) \delta^t_i e^{\Delta_i^t}[B_{ik}^t + C_{ik}^t]^2,\end{align}
where
\begin{equation}\label{bint}B_{ik}^t = k \int_{1}^{V_i^t-}\log \left[1 + \frac{1}{k(1 - H_k^t(z))}\right]H^{0,t}_k(\dd z) - \int_{1}^{V_i^t-} \frac{1}{1 - H_k^t(z)}H^{0,t}_k(\dd z),\end{equation}
\begin{equation}\label{cint}C_{ik}^t = \int_{1}^{V_i^t-} \frac{1}{1 - H_k^t(z)}H^{0,t}_k(\dd z) - \int_{1}^{V_i^t-} \frac{1}{1 - H^t(z)}H^{0,t}(\dd z),\end{equation} $\Delta_i^t$ is between the first term in the rhs of \eqref{bint} and second term in the rhs of \eqref{cint}, and \[H^t_k(z) =\frac{1}{k} \sum_{i=1}^k I(V_{i}^t \le z), \quad  H^{j,t}_k(z) = \frac{1}{k} \sum_{i=1}^k I(V_{i}^t \le z, \delta_i^t = j), \quad j=0,1.\] Moreover, 
\begin{align}
\frac{1}{k} \sum_{i=1}^k \varphi(V_i^t) \gamma^t_0(V_i^t) \delta^t_i C_{ik}^t  = 
& - \int\int\int \frac{I(v<u, v<w) \varphi(w) \gamma_0^t(w)}{(1 - H^t(v))^2} H_k^t(\dd u) H_{k}^{0,t}(\dd v) H_{k}^{1,t}(\dd w) \nonumber \\ 
& + 2\int\int I(v<w) \frac{\varphi(w) \gamma_0^t(w)}{1 - H^t(v)} H^{0,t}_k(\dd v) H^{1,t}_k(\dd w) \label{termlemma2.3} \\
& - \int\int I(v<w) \frac{\varphi(w) \gamma_0^t(w)}{1 - H^t(v)} H^{0,t}(\dd v) H^{1,t}_k(\dd w) + r_{k,n}^{1,t},\nonumber
\end{align}
where 
\begin{equation}r_{k,n}^{1,t} = \int\int \varphi(w) \gamma_0^t(w)  I(z<w) \frac{(H_k^t(z) - H^t(z))^2}{(1 - H^t(z))^2 (1 - H^t_k(z))} H^{0,t}_k(\dd z)H^{1,t}_k(\dd w).\label{rkn1}\end{equation}

As a consequence of Condition~\ref{cond:bounded_phi}, all denominators appearing in the proof are bounded away from zero. Hereinafter, integrals without {integration limits} are understood to be from 1 to $\infty.$ The following result is the analogue of Lemma 2.3, \cite{stute}, in our setting.
\begin{lemma}
Under the above conditions we have that the following term is $\overline{o}_{P}(k^{-1/2})$:
\begin{align*}
\int\int I(v<w) \frac{\varphi(w) \gamma_0^t(w)}{1 - H^t(v)} [(H^{0,t}_k(\dd v) -  H^{0,t}(\dd v))(H^{1,t}_k(\dd w) - H^{1,t}(\dd w))].
\end{align*}
\end{lemma}
\begin{proof}
Consider the second term in the rhs of \eqref{termlemma2.3}
\begin{align*}V^t_{k,n} &:= \int\int I(v<w) \frac{\varphi(w) \gamma_0^t(w)}{1 - H^t(v)} H^{0,t}_k(\dd v) H^{1,t}_k(\dd w)\\
&= \frac{1}{k^2}\sum_{i=1}^k \sum_{j=1}^k \frac{I(V_i^t<V_j^t) (1 - \delta_i^t) \delta_j^t \varphi(V_j^t) \gamma_0^t(V_j^t)}{1 - H^t(V_i^t)}.\end{align*}
We now rely on several results from classical U-statistics, specifically, Lemma 5.2.1A and Theorem 5.3.2 in~\cite{serfling}. First, note that $V^t_{k,n}$ is in fact a V-statistic (see for definition, e.g., Section 5.1.2, \cite{serfling}). Next, define 
\begin{align*}U^t_{k,n}:&=\frac{k}{k-1}V^t_{k,n}\\
 & =  \frac{1}{k(k-1)}\sum_{i=1}^k \sum_{j\neq i} \frac{I(V_i^t<V_j^t) (1 - \delta_i^t) \delta_j^t \varphi(V_j^t) \gamma_0^t(V_j^t)}{1 - H^t(V_i^t)} \\
& =  \frac{1}{k(k-1)} \sum_{i=2}^k\sum_{j=1}^{i-1}\left(\frac{I(V_i^t<V_j^t) (1 - \delta_i^t) \delta_j^t \varphi(V_j^t) \gamma_0^t(V_j^t)}{1 - H^t(V_i^t)} \right. \\  & \qquad\qquad\qquad\qquad+ \left.\frac{I(V_j^t<V_i^t) (1 - \delta_j^t) \delta_i^t \varphi(V_i^t) \gamma_0^t(V_i^t)}{1 - H^t(V_j^t)}\right)\\
& =  \frac{1}{k(k-1)} \sum_c \left(h_0\Big(\binom{V_i^t}{\delta_i^t}, \binom{V_j^t}{\delta^t_j}\Big) + h_0\Big(\binom{V_j^t}{\delta^t_j}, \binom{V_i^t}{\delta_i^t}\Big)\right) \\&= \frac{2}{k(k-1)} \sum_c h\Big(\binom{V_i^t}{\delta_i^t}, \binom{V_j^t}{\delta^t_j}\Big),\end{align*} 
where $\sum_c$ denotes summation over the $\binom{k}{2}$ combinations of 2 distinct elements $\{i_1, i_2\}$ from $\{1, \ldots, k\},$ \[h_0(\textbf{x}, \textbf{y}) = \frac{I(x_1 < y_1)(1 - x_2) y_2 \varphi(y_1) \gamma_0^t(y_1)}{1 - H^t(x_1)}\] with $\textbf{x}, \textbf{y} \in \amsmathbb{R}^2,$  and $h(\textbf{x}, \textbf{y}) = \big[h_0(\textbf{x}, \textbf{y}) + h_0(\textbf{y}, \textbf{x})\big]/2.$ Clearly, the above expression casts $U^t_{k,n}$ as a U-statistic, since the kernel $h$ is symmetric. Denote
\[\theta^t = \amsmathbb{E} h\Big(\binom{V_1^t}{\delta_1^t}, \binom{V_2^t}{\delta^t_2}\Big).\] Then, according to the definition from Section 5.3.1, \cite{serfling}, the Hajek projection of the U-statistic $U^t_{k,n}$ is
\[\widehat{U}^t_{k,n} = \sum_{i=1}^k \amsmathbb{E}\left[U^t_{k,n}\left|\binom{V_i^t}{\delta^t_i}\right.\right] - (k-1) \theta^t = \frac{2}{k}\sum_{i=1}^k h_1\Big(\binom{V_i^t}{\delta^t_i}\Big) - \theta^t,\] where
\[h_1(\textbf{x}) = \amsmathbb{E}\,h\Big(\textbf{x}, \binom{V_1^t}{\delta^t_1}\Big).\]
Similarly to Section 5.3.2, \cite{serfling}, we have that $U^t_{k,n} - \widehat{U}^t_{k,n}$ is also a U-statistic,
\[U^t_{k,n} - \widehat{U}^t_{k,n} = \frac{1}{\binom{k}{2}} \sum_c H\Big(\binom{V_i^t}{\delta_i^t}, \binom{V_j^t}{\delta^t_j}\Big)\] but now with the symmetric kernel
\[H(\textbf{x}, \textbf{y}) = h(\textbf{x}, \textbf{y}) - h_1(\textbf{x}) - h_1(\textbf{y}) + \theta^t.\]
Following the notation $\zeta_c$ from Section 5.2.1, \cite{serfling}, we define in our case for $c=0,1,2$,
\begin{equation} \label{zeta}\zeta^t_0 = 0, \:\:\zeta^t_1 = {\rm var}\, h_1\Big(\binom{V_1^t}{\delta^t_1}\Big)\:\: \text{ and }\:\: \zeta^t_2 = {\rm var}\,h\Big(\binom{V_1^t}{\delta_1^t}, \binom{V_2^t}{\delta^t_2}\Big),\end{equation} and note that $\zeta^t_0 \le \zeta^t_1 \le \zeta^t_2.$ Define also $\chi^{t}_c,$ $c = 0,1,2,$  by replacing the kernel $h$ with $H$ in \eqref{zeta} and note that $\chi^t_0 = \chi^t_1 = 0,$ see the argument before Theorem 5.3.2 in~\cite{serfling}. Based on (*) in Lemma 5.2.1, \cite{serfling}, we derive 
\begin{equation}\label{variances}{\rm var}\, U^t_{k,n} = \frac{4(k-2)}{k(k-1)} \zeta^t_1 + \frac{2}{k(k-1)} \zeta^t_2, \quad {\rm var}\, \big(U^t_{k,n} - \widehat U^t_{k,n}\big) = \frac{2}{k(k-1)} \chi^t_2.\end{equation} Moreover, by the Cauchy-Schwarz inequality
\[\chi_2^t \le 6 \zeta^t_1 + 3 \zeta^t_2.\]
Contrary to Lemma 2.3 in~\cite{stute}, in our setting ${\rm var}\, U^t_{k,n}$ and ${\rm var}\, \big(U^t_{k,n} - \widehat U^t_{k,n}\big)$ might depend on $t$ asymptotically as $t\to\infty,$ and thus an appeal to \eqref{variances} is not enough to prove 
$U^t_{k,n} - \widehat U^t_{k,n} = O(k^{-1})$ in probability uniformly for large $t$ when transitioning from the conditional setting given $Z_{n-k,n} = t$ to the unconditional setting. 

We now argue that ${\rm var}\, U^t_{k,n}$ and ${\rm var}\, \big(U^t_{k,n} - \widehat U^t_{k,n}\big)$ do not depend on $t$ asymptotically as $t\to\infty$ and $\zeta^t_1,$ $\zeta^t_2$ can be bounded from above uniformly in large $t.$ %and thus, the arguments of Lemmas 2.2 and 2.3, \cite{stute}, may be adapted to our setting. 
Indeed, for $\theta^t,$ we get by \eqref{gamma_ex}, \eqref{h01t},
Potter's bounds, and the dominated convergence theorem
\begin{align}\nonumber\theta^t &= \int_1^\infty\int_1^\infty  \frac{I(v<w)\varphi(w)\gamma_0^t(w)}{1 - H^t(v)} H^{0,t}(\dd v) H^{1,t}(\dd w)\\& = \nonumber \int_1^\infty\int_1^w \varphi(w) \frac{1}{(1 - G^t(v))} G^t(\dd v) F^t(\dd w)\\
&\label{P0} \to \frac{1}{\gamma_G \gamma_F}\int_1^\infty \varphi(w) \log(w) w^{-1/\gamma_F - 1} \dd w =:\theta.\end{align}

Let us explain the latter convergence in more detail since we  use the same argument repeatedly in the proof. The integral in the second line of \eqref{P0} can be represented as a mathematical expectation of some random variable depending on $t.$ To apply the dominated convergence theorem, we should show that this random variable is bounded uniformly in large $t$ by some random variable with finite expectation. First, let us determine the former. We have,
\begin{eqnarray}\nonumber\int_1^\infty\int_1^w \varphi(w) \frac{1}{(1 - G^t(v))} G^t(\dd v) F^t(\dd w) &=& \int_1^\infty\int_1^{U_{F^t}(x)}  \frac{\varphi(U_{F^t}(x))}{(1 - G^t(v))} G^t(\dd v) \frac{1}{x^2} \dd x\\
&=& \amsmathbb{E}\left[\varphi(U_{F^t}(\xi)) \int_1^{U_{F^t}(\xi)}  \frac{1}{(1 - G^t(v))} G^t(\dd v)\right],\label{clarification}\end{eqnarray}
where $\xi$ denotes a Pareto$(1)$ random variable and $U_F$ denotes the tail quantile function of $F,$ i.e. $U_F(x) := \inf\{y: F(y) \ge 1 - 1/x\}.$ Thus, the random sequence of interest is the one inside the expectation in the right-hand side of \eqref{clarification}. 

By Potter's bounds, eventual continuity of $G$ and continuity of $\varphi,$ for some $\delta\in(0,1)$ and large $t$
\begin{align}\nonumber
&|\varphi(w)|\int_1^w  \frac{1}{(1 - G^t(v))} G^t(\dd v) \le - |\varphi(w)| \log(1 - G^t(w)) \\ 
\label{a9}&\le |\varphi(w)| \big(\log w\,(1/\gamma_G + \delta) - \log(1 - \delta)\big)\\ \nonumber
&\le A\, I(w\in[1,1+\delta]) + B\, I(w>1+
\delta) |\varphi(w)| \log w,\end{align} for some positive constants $A$ and $B$. Similarly to \eqref{clarification}, we have
\[\int_{1}^\infty |\varphi(w)| \log w\, F^t(\dd w) = \amsmathbb{E} \big[|\varphi(U_{F^t}(\xi))| \log (U_{F^t}(\xi)) \big].\] On the other hand, one can show that
\begin{equation}\label{uft}U_{F^t}(x) = \frac{U_F(x/(1 - F(t)))}{U_F(1/(1 - F(t)))}.\end{equation}
By Potter's bounds, we get for some $\bar\delta,\varepsilon>0,$ $T_0>1$ and all $t>T_0$
\begin{equation} \label{potterforintegral} f_\ast(x) := (1-\bar\delta) x^{\gamma_F - \varepsilon} \le \frac{U_F(x/(1 - F(t)))}{U_F(1/(1 - F(t)))} \le (1+\bar\delta) x^{\gamma_F + \varepsilon} = f^\ast(x).\end{equation}
Assume that $|\varphi(x)|$ is monotone for $x>T_1\ge1.$ Moreover, assume without loss of generality that $|\varphi(x)|$ is non-decreasing on this set. Denote $T_2 = \max(T_0, T_1).$
Then by the three latter identities and properties of $\varphi$
\begin{eqnarray*}
|\varphi(U_{F^t}(w))| \log(U_{F^t}(w)) &\le& C I(w\in[1,T_2]) + |\varphi(U_{F^t}(w))| \log(U_{F^t}(w)) I(w>T_2)\\
&\le& C I(w\in[1,T_2]) + |\varphi(f^\ast(w))| \log(f^\ast(w)) I(w>T_2) \end{eqnarray*} for some $C>0,$ where the latter function does not depend on $t$ as required. Finally,
\begin{eqnarray*} \amsmathbb{E}\big[|\varphi(f^\ast(\xi))| \log(f^\ast(\xi)) I(\xi>T_2)\big] &=& \int_{T_2}^\infty|\varphi(f^\ast(x))| \log (f^\ast(x))  \frac{1}{x^2} \dd x \\ &\le& C_1 \int_{T_3}^\infty |\varphi(x)| \log x\, x^{- 1/(\gamma_F + \varepsilon) - 1} \dd x,\end{eqnarray*} for some $C_1>0$ and $T_3>1,$ and the last integral is finite by continuity of ${\varphi}$ and Condition~\ref{cond:bounded_phi}. Summarizing the above, for all large $t$ the random variable 
\[\eta_t = |\varphi(U_{F^t}(\xi))| \int_1^{U_{F^t}(\xi)} \frac{1}{(1 - G^t(v))} G^t(\dd v)\] is bounded by some integrable random variable, therefore the conditions of the dominated convergence theorem hold, and the latter implies \eqref{P0}.

In a similar fashion, applying the dominated convergence theorem and using additionally Potter's bounds for $H^t,$ we get the uniform upper bound for $\zeta^t_2$ and thus,
\begin{eqnarray*}
\zeta^t_2 & = & \int_1^\infty \int_1^\infty \frac{I(v<w)\varphi^2(w)(\gamma_0^t(w))^2}{(1 - H^t(v))^2} H^{0,t}(\dd v) H^{1,t}(\dd w) - (\theta^t)^2 \\ & = & \int_1^\infty\int_1^w \varphi^2(w) \frac{1 - F^t(v)}{(1 - H^t(v))^2 (1 - G^t(w))} G^t(\dd v) F^t(\dd w) - (\theta^t)^2\\ &\to& \frac{\gamma_H}{\gamma_G \gamma_F}\int_1^\infty \varphi^2(w) w^{1/\gamma_H - 1} \dd w - \theta^2.
\end{eqnarray*} 
The analysis of $\zeta^t_1$ is similar, thus we omit it. 

To conclude, by a final application of Potter's bounds, we have established that the variance of $U^t_{k,n} - \widehat{U}^t_{k,n}$ is $O(k^{-2})$  as $k,\:n/k\to\infty$ uniformly in large $t,$ %and thus the proof of Lemma 2.3, \cite{stute} (and, consequently, Lemma 2.2 ibid.; see Lemma~\ref{LA.3} below), can be reused in our setting. In other words, as $k,\:n/k\to\infty$,
and then by applying Chebyshev's inequality we get that the random sequence $U^t_{k,n} - \widehat{U}^t_{k,n}$ is $\overline{o}_{P}(k^{-1/2})$.
\end{proof}
We may prove 
an analogous result for the first term in the rhs of \eqref{termlemma2.3}.
\begin{lemma}\label{LA.3}
Under the above conditions we have that the following term is $\overline{o}_{P}(k^{-1/2})$:
\begin{align*}
\int\int\int \frac{I(v<u, v<w) \varphi(w) \gamma_0^t(w)}{(1 - H^t(v))^2}
\cdot[
H_k^t(\dd u) H_{k}^{0,t}(\dd v) H_{k}^{1,t}(\dd w) -H_k^t(\dd u) H^{0,t}(\dd v) H^{1,t}(\dd w)
\\
-H^t(\dd u) H_{k}^{0,t}(\dd v) H^{1,t}(\dd w)
-H^t(\dd u) H^{0,t}(\dd v) H_{k}^{1,t}(\dd w)
+2 H^t(\dd u) H^{0,t}(\dd v) H^{1,t}(\dd w)
].
\end{align*}
\end{lemma}
\begin{proof}
Similar to the previous case.
\end{proof}

Thus, similarly to Corollary 2.4, \cite{stute}, we derive that
\begin{equation}\label{c24} \frac{1}{k} \sum_{i=1}^k \varphi(V_i^t) \gamma^t_0(V_i^t) \delta^t_i C_{in}^t = \frac{1}{k}\sum_{i=1}^k \gamma_1^t(V_i^t) (1 - \delta^t_i) - \frac{1}{k}\sum_{i=1}^k \gamma_2^t(V_i^t) + r^{1,t}_{k,n} + r^{2,t}_{k,n},\end{equation} where $r^{2,t}_{k,n} = \overline{o}_{P}(k^{-1/2})$.

We turn our attention to the behavior of $r_{k,n}^{1,t},$ defined by \eqref{rkn1} (compare with $R_{n1}$ in~\cite{stute}).

\begin{lemma}
We have that  \begin{equation}|r_{k,n}^{1,t}| = \overline{O}_{P}(k^{-1}).\label{rn1}\end{equation}
\end{lemma}
\begin{proof}
First, by Condition~\ref{cond:bounded_phi}
\begin{align}|r_{k,n}^{1,t}| &\le \int_1^\infty |\varphi(w)| \gamma_0^t(w) H^{1,t}_k(\dd w) \cdot \int_1^\infty I(z<T) \frac{(H_k^t(z) - H^t(z))^2}{(1 - H^t(z))^2 (1 - H^t_k(z))} H^{0,t}_k(\dd z)\nonumber\\
&=: I^t_{k,1}\cdot I^t_{k,2},\label{rn1ineq}\end{align} therefore we can evaluate the asymptotic of $I^t_{k,1}$ and $I^t_{k,2}$ separately. First, for every $\varepsilon>0$ by  Markov's inequality and an argument similar to that used for proving \eqref{P0}
\begin{align*}
\pr(I^t_{k,1} > \varepsilon) &= \pr\left(\frac{1}{k}\sum_{i=1}^k|\varphi(V_i^t)|\gamma_0^t(V_i^t)\delta_i^t > \varepsilon\right)\\
&\le \frac{1}{\varepsilon} \int_{1}^\infty |\varphi(w)| F^t(\dd w) \le \frac{1}{\varepsilon\gamma_F}\int_1^\infty |\varphi(w)| w^{-1/\gamma_F-1+\delta} \dd w \end{align*} for large $t$ and some $\delta>0.$ Hence we derive that $I^t_{k,1} = \overline{O}_{P}(1).$ 

Next, consider $I^t_{k,2}.$ By \eqref{tailfunction} and Potter's bounds, we can bound the denominator in $I_{k,2}$ from below by some constant independent of $t$ since $(1 - H^t(z))/(1-H_k^t(z))$ is stochastically bounded under the condition $z<T.$ Hence, it is enough to find the asymptotic of the following integral
\[I^t_{k,3}:= \int_1^\infty I(z<T) (H_k^t(z) - H^t(z))^2 H^{0,t}_k(\dd z).\]
Clearly,
\[I^t_{k,3} \le \int_1^\infty (H_k^t(z) - H^t(z))^2 H^{t}_k(\dd z).\] By the classical goodness-of-fit technique, the latter integral equals
\[\int_0^1 (Q_k(x) - x)^2 Q_k(\dd x) =: I_{k,4},\] and thus does not depend on $t,$ where $Q_k(x) = \frac{1}{k}\sum_{i=1}^k I(U_i\le x),$ the empirical cdf of independent $U[0,1]$ random variables $\{U_i\}_{i=1}^k,$ and \begin{equation}\label{ui}U_i = H^t(V_i^t),\quad i=1, ..., k.\end{equation} Finally,
\[I_{k,4} = \int_0^1 (Q_k(x) - x)^2 \dd x + \int_0^1 (Q_k(x) - x)^2 \dd (Q_k(x) - x),\] where the first summand on the rhs is exactly the Cram\'er-von Mises statistic divided by $k$ and the second summand is asymptotically smaller in probability than the first one, since $\sqrt{k}(Q_k(x) - x)$ converges in distribution to a Brownian bridge on $[0,1]$ in a Skorokhod space. Hence, $I_{k,4}$ and therefore $I^t_{k,2}$ are $\overline{O}_{P}(k^{-1})$ as desired.
\end{proof}

Next, consider $S_{k,n}^{1,t},$ given by
\[S_{k,n}^{1,t} = \frac{1}{k} \sum_{i=1}^k \varphi(V_i^t) \gamma_0^t(V_i^t) \delta_i^t B_{ik}^t,\] (compare with $S_{n1}$ from Lemma 2.6 in~\cite{stute}).
\begin{lemma}
We have that
\begin{equation}\label{skn1}S_{k,n}^{1,t} = \overline{O}_{P}(k^{-1}).\end{equation}
\end{lemma}

\begin{proof}
Similarly to the relation (2.4) in~\cite{stute},
\begin{equation} \label{bin} -\frac{1}{2k} \int_1^{V_i^t-} \frac{H^{0,t}_k(\dd z)}{(1 - H^t_k(z))^2} \le B_{ik}^t \le 0. \end{equation}
The analysis of $S_{k,n}^{1,t}$ resembles that of $r_{k,n}^{1,t}.$ We have by \eqref{rn1ineq} and \eqref{bin},
\begin{eqnarray*}|S_{k,n}^{1,t}| &\le& \frac{1}{2k}\int_1^\infty\int_1^\infty |\varphi(w)|\gamma_0^t(w) I(z<w) \frac{1}{(1 - H_k^t(z))^2} H_{k}^{0,t}(\dd z) H_{k}^{1,t}(\dd w)\\
& \le & \frac{1}{2k} I^t_{k,1} \cdot \int_{1}^\infty I(z<T) \frac{1}{(1 - H_k^t(z))^2} H_{k}^{0,t}(\dd z) =: \frac{1}{2k} I^t_{k,1} \cdot I^t_{k,5},\end{eqnarray*} where $I^t_{k,1}$ was defined in \eqref{rn1ineq} and proved to be $\overline{O}_P(1).$ Recalling that $Q_k$ is the empirical cdf of the $U[0,1]$ random variables defined by \eqref{ui} and does not depend on $t$, we have by Potter's bound
\begin{eqnarray}\nonumber I^t_{k,5} & \le & \int_{1}^\infty I(z<T) \frac{1}{(1 - H_k^t(z))^2} H_{k}^{t}(\dd z) \\ \nonumber & = & \int_{0}^1 I(x<H^t(T)) \frac{1}{(1 - Q_k(x))^2} Q_{k}(\dd x)\\
&\le & \int_{0}^1 I(x<1 - (1 - \delta)T^{-1/\gamma_H-\delta}) \frac{1}{(1 - Q_k(x))^2} Q_{k}(\dd x) \label{ik5}\end{eqnarray} for large $t$ and some $\delta>0,$
which implies the result. \end{proof}

Finally, consider $S_{k,n}^{2,t},$ given by 
\begin{equation}S_{k,n}^{2,t} = \frac{1}{2k} \sum_{i=1}^k |\varphi(V_i^t)| \delta^t_i e^{\Delta_i^t}\left[B_{ik}^t + C_{ik}^t\right]^2,\label{skn2}\end{equation}
(compare with $S_{n2}$ from Lemma 2.7 \cite{stute}).
\begin{lemma}
We have that
\begin{equation}\label{s2kn}S_{k,n}^{2,t} = \overline{O}_{P}(k^{-1}).\end{equation}
\end{lemma}
\begin{proof}
In fact, \eqref{bin} and \eqref{ik5} imply $B_{ik}^t = \overline{O}_{P}(k^{-1})$. Let us show that $C_{ik}^t = \overline{O}_{P}(k^{-1/2}).$ From \eqref{cint} and \eqref{ui} it follows
\[C_{ik}^t = \int_{0}^{U_i-} \frac{1}{1 - Q_k(x)} Q^{0}_k(\dd x) - \int_{0}^{U_i-} \frac{1}{1 - x} Q^{0}(\dd x),\] where
\[ Q^{0}_k(x) = \frac{1}{k}\sum_{i=1}^k I(U_i\le x, \delta_i^t = 0)\quad \text{ and } \quad Q^0(x) = \pr (U_1\le x, \delta_1^t = 0).\]
We have,
\begin{align*}
C_{ik}^t &\le \left|\int_{0}^{U_i-} \left(\frac{1}{1 - Q_k(x)} - \frac{1}{1 - x}\right) Q^{0}_k(\dd x)\right| + \left|\int_{0}^{U_i-} \frac{1}{1 - x} Q^{0}_k(\dd x) - \int_{0}^{U_i-} \frac{1}{1 - x} Q^{0}(\dd x)\right|\\
&=: C_{ik}^{t1} + C_{ik}^{t2}.
\end{align*}
Note that we can consider only $C_{ik}^t$ with $U_i \le H^t(T)$ in \eqref{skn2} due to Condition~\ref{cond:bounded_phi}, where $H^t(T) \le T_0$ for large $t$ and some $T_0\in(0,1)$ by \eqref{ik5}. Thus, by Kolmogorov's theorem on the convergence rate of the supremum of the absolute difference between empirical and true cdfs (see, e.g., Theorems 1 and 2 in~\cite{kolmogorov}) and the fact that $(1 - x)/(1 - Q_k(x))$ is stochastically bounded from above on $x<U_{k,k}$ (see p. 415, \cite{shorackwellner}), for large $k$
\[\sup_{x\in [0, U_i)} \left|\frac{1}{1 - Q_k(x)} - \frac{1}{1 - x}\right| \le \frac{\sup_{x\in [0, T_0]} |Q_k(x) - x|}{(1 - Q_k(T_0))(1 - T_0)} = \overline{O}_{P}(k^{-1/2})\]
since neither $Q_k(x)$ nor $x$ depend on $t.$ This implies $C_{ik}^{t1} = \overline{O}_{P}(k^{-1/2})$. 

Let us analyze $C_{ik}^{t2}.$ The main difficulty is that $Q^0_k$ and $Q^0$ depend on $t$ via $\{\delta^t_i\}.$ First, let us equivalently represent the integrals in $C_{ik}^{t2}$ as integrals with respect to proper cdfs (and not only by sub-distribution functions). Consider the auxiliary random variables $\bar U_i, i = 1,...,k,$ where
\[\bar U_i = U_i (1 - \delta_i^t) + \delta_i^t(1 - \varepsilon_i),\] and $\{\varepsilon_i\}_{i=1}^k$ are independent random variables, uniformly distributed on $[0,\bar\varepsilon]$ with $\bar\varepsilon<1 - T_0$ and independent of $\{U_i\}_{i=1}^k.$ Denote the cdf and empirical cdf of $\{\bar U_i\}_{i=1}^k$ by $\bar Q(x)$ and $\bar Q_k(x),$ respectively. Then, for all $x\in [0, T_0]$
\[ Q^{0}(x) = \bar Q(x) \quad \text{ and } \quad  Q^{0}_k(x) = \bar Q_k(x).\] Next, let a non-increasing step function $f_k:[0,1)\to [1,\infty)$ and a partition $\{\Delta_j\}_{j=1}^\infty$ of the interval $[0,1)$ be such that
\[f_k(x) = 1 + \frac{j-1}{k} \quad \text{and} \quad 0\le\frac{1}{1-x} - f_k(x) < \frac{1}{k} \quad \text{for } x\in \Delta_j,\; j\in\amsmathbb{N}.\]
Clearly, $f_k$ and $\{\Delta_j\}_{j=1}^\infty$ are defined uniquely by the latter. Then for every $j\in\amsmathbb{N}$ such that $\Delta_j \cap [0,T_0] \neq \emptyset$
\begin{eqnarray} \nonumber
\left|\int_{\Delta_j} \frac{1}{1-x}\big(\bar Q_k(\dd x) - \bar Q(\dd x)\big)\right| &\le& \left|\int_{\Delta_j} f_k(x)\big(\bar Q_k(\dd x) - \bar Q(\dd x)\big)\right| + \frac{1}{k} \left|\int_{\Delta_j}\big(\bar Q_k(\dd x) - \bar Q(\dd x)\big)\right|\\
\label{newsight}&\le& 2 (C+k^{-1}) \sup_{x\in [0,1]} \big|\bar Q_k(x) - \bar Q(x)\big|,
\end{eqnarray} where $C = k f_k(T_0).$ 
Denote $x_{j-1}$ and $x_j,$ the endpoints of $\Delta_j,$ $j\in \amsmathbb{N},$ and notice that $x_0 = 0.$ By \eqref{newsight} we have
\begin{eqnarray*} \int_0^{U_i-} \frac{1}{1-x}\big(\bar Q_k(\dd x) - \bar Q(\dd x)\big) &=& \sum_{j=1}^{kf_k(U_i-)} \Big(1 + \frac{j-1}{k}\Big)\big(\bar Q_k(x_j) - \bar Q_k(x_{j-1}) - \bar Q(x_j) + \bar Q(x_{j-1}) \big)\\ &&+ kf_k(U_i-) \cdot O_P(1/k) \sup_{x\in [0,T_0]} \big|\bar Q_k(x) - \bar Q(x)\big|\\
\\ &=& f_k(U_i-) \big(\bar Q_k(x_m) - \bar Q(x_m)\big)  + O_P(1) \sup_{x\in [0,T_0]} \big|\bar Q_k(x) - \bar Q(x)\big|\\&& + \frac{1}{k}\sum_{j=1}^{kf_k(U_i-)} \big(\bar Q_k(x_j) - \bar Q_k(x_{j-1})\big)\\
&=& O_P(1) \sup_{x\in [0,T_0]} \big|\bar Q_k(x) - \bar Q(x)\big|,
\end{eqnarray*} where $m$ denotes $kf_k(U_i-),$ the final relation holds since $f_k(U_i-) \le f_k(T_0) < \infty,$ and $O_P(1)$ in the last line does not depend on $t.$ By another application of Kolmogorov's theorem,
\[ \sup_{x\in[0, T_0]} |\bar Q_k(x) - \bar Q(x)| = \overline{O}_{P}(k^{-1/2}),\] since the latter supremum does not depend on $t$ in distribution. This proves $C_{ik}^t = \overline{O}_{P}(k^{-1/2})$.\\

Next, it follows from the asymptotics of $B_{ik}^t$ and $C_{ik}^t$ that $e^{\Delta_i} = \overline{O}_{P}(1)$. Finally,
\[\frac{1}{k} \sum_{i=1}^k |\varphi(V_i^t)| \delta^t_i \le \frac{1}{k} \sum_{i=1}^k |\varphi(V_i^t)| \gamma_0^t(V_i^t)\delta^t_i = \overline{O}_{P}(1).\] Summarizing the above, we have found that $S^{2,t}_{k,n} = \overline{O}_{P}(k^{-1})$, as desired. \end{proof}

Piecing all the above asymptotics together, that is appealing to \eqref{full}, \eqref{c24}, \eqref{rn1}, \eqref{skn1}, and \eqref{s2kn} completes the proof of Theorem~\ref{theo:Rkn_term}.
\end{proof}

We proceed to prove Theorem~\ref{T4}, that is in fact the general version of Theorem~\ref{theo:Rkn_term}, by lifting the truncation Condition \ref{cond:bounded_phi}. In the proof of this extension, we require analogues of (1.5) and (1.6) in~\cite{stute}, which help to control the tail. For that purpose we require the following lemma.

\begin{lemma}\label{P1}
Assume Condition~\ref{envelope_moment}. Then for large $t$
\begin{equation}\label{1.5}\int_1^\infty \varphi^2 (\gamma^t_0)^2  \dd H^{1,t} <\infty\end{equation}
and
\begin{equation}\label{1.6} \int_{1}^\infty |\varphi|\sqrt{C^t} \dd F^t < \infty,\end{equation}
{ where \[C^t(x) = \int_{1}^x \frac{G^t(\dd y)}{(1 - H^t(y))(1 - G^t(y))}.\]}
\end{lemma}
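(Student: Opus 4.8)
The proof rests on the same mechanism already used to establish \eqref{P0}: rewrite each integral as an expectation over a Pareto$(1)$ variable, invoke Potter's bounds both for the tails $1-F^t,\,1-G^t,\,1-H^t$ and for the tail quantile function $U_F$ (via \eqref{uft}--\eqref{potterforintegral}), bound the integrand by a $t$-free majorant, and finally reduce matters to the single integrability requirement in Condition \ref{envelope_moment} by an elementary change of variables and, in the second case, the Cauchy--Schwarz inequality. Throughout, $\xi$ denotes a Pareto$(1)$ variable, $\varepsilon_0>0$ is the exponent furnished by Condition \ref{envelope_moment}, and $\delta,\varepsilon>0$ are auxiliary parameters to be chosen small at the end; contributions of the integrands near $x=1$ are always bounded by constants using continuity of $\bar\varphi$ and are tacitly discarded.

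First I would treat \eqref{1.5}. Since $F$ and $G$ are eventually continuous, $\gamma_0^t(x)=(1-G^t(x))^{-1}$ by \eqref{gamma_ex} and $H^{1,t}(\dd x)=(1-G^t(x-))F^t(\dd x)$ by \eqref{h01t}, so for large $t$ the left side of \eqref{1.5} equals $\int_1^\infty\varphi^2(x)(1-G^t(x))^{-1}F^t(\dd x)$ up to a bounded term. Potter's bounds for $1-G^t$ give $(1-G^t(x))^{-1}\le(1-\delta)^{-1}x^{1/\gamma_G+\delta}$ for $x\ge1$ and $t$ large, so it suffices to bound $\int_1^\infty\bar\varphi^2(x)x^{1/\gamma_G+\delta}F^t(\dd x)=\amsmathbb{E}\big[\bar\varphi^2(U_{F^t}(\xi))\,U_{F^t}(\xi)^{1/\gamma_G+\delta}\big]$ uniformly in large $t$. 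Using \eqref{uft} and the Potter bounds \eqref{potterforintegral} for $U_F$, one has $U_{F^t}(u)\le f^\ast(u)=(1+\bar\delta)u^{\gamma_F+\varepsilon}$ for $t$ large; assuming, as in the proof of \eqref{P0}, that $\bar\varphi$ is eventually non-decreasing (the non-increasing case being symmetric, bounding instead by $f_\ast$), the integrand is dominated for $u$ large by $\bar\varphi^2(f^\ast(u))f^\ast(u)^{1/\gamma_G+\delta}u^{-2}$, which is free of $t$. Substituting $w=f^\ast(u)$ turns $\int_1^\infty\bar\varphi^2(f^\ast(u))f^\ast(u)^{1/\gamma_G+\delta}u^{-2}\dd u$ into $c\int_1^\infty\bar\varphi^2(w)\,w^{1/\gamma_G+\delta-1/(\gamma_F+\varepsilon)-1}\dd w$ for a constant $c$. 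Since $1/\gamma_G+\delta-1/(\gamma_F+\varepsilon)-1\to1/\gamma_G-1/\gamma_F-1=\alpha(\varepsilon_0)-\varepsilon_0$ as $\delta,\varepsilon\to0$, for $\delta,\varepsilon$ small enough the exponent is at most $\alpha(\varepsilon_0)$, so the integral is finite by Condition \ref{envelope_moment}. This proves \eqref{1.5}.

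Next I would treat \eqref{1.6}. Because $1-H^t=(1-F^t)(1-G^t)$, we have $C^t(x)=\int_1^x\big[(1-F^t(y))(1-G^t(y))^2\big]^{-1}G^t(\dd y)$. Potter's bounds give $\big[(1-F^t(y))(1-G^t(y))^2\big]^{-1}\le(1-\delta)^{-3}y^{1/\gamma_F+2/\gamma_G+3\delta}$ for $y\ge1$, $t$ large; writing $a=1/\gamma_F+2/\gamma_G+3\delta$ and integrating by parts, $\int_1^x y^aG^t(\dd y)=1-x^a(1-G^t(x))+a\int_1^x y^{a-1}(1-G^t(y))\dd y\le1+a(1+\delta)\int_1^x y^{a-1-1/\gamma_G+\delta}\dd y$, whose exponent $1/\gamma_H-1+4\delta$ exceeds $-1$, so $\int_1^x y^aG^t(\dd y)\le c\,x^{1/\gamma_H+4\delta}$ and hence $\sqrt{C^t(x)}\le c\,x^{1/(2\gamma_H)+2\delta}$ for $x\ge1$, $t$ large. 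Therefore $\int_1^\infty|\varphi(x)|\sqrt{C^t(x)}\,F^t(\dd x)\le c\int_1^\infty\bar\varphi(x)x^{1/(2\gamma_H)+2\delta}F^t(\dd x)$, and the same Pareto change of variables as above bounds this by $c'\int_1^\infty\bar\varphi(w)\,w^{b}\dd w$ with $b=1/(2\gamma_H)+2\delta-1/(\gamma_F+\varepsilon)-1$. By Cauchy--Schwarz,
\[\int_1^\infty\bar\varphi(w)\,w^{b}\dd w\le\Big(\int_1^\infty\bar\varphi^2(w)\,w^{\alpha(\varepsilon_0)}\dd w\Big)^{1/2}\Big(\int_1^\infty w^{2b-\alpha(\varepsilon_0)}\dd w\Big)^{1/2},\]
the first factor being finite by Condition \ref{envelope_moment}. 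A direct computation gives $2b-\alpha(\varepsilon_0)=2/\gamma_F-2/(\gamma_F+\varepsilon)+4\delta-1-\varepsilon_0\to-1-\varepsilon_0<-1$ as $\delta,\varepsilon\to0$, so the second factor is finite for $\delta,\varepsilon$ small, which yields \eqref{1.6}.

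\textbf{Main obstacle.} No conceptually new difficulty is expected; the only delicacy is the \emph{uniform-in-$t$} control of the majorants, which requires applying Potter's bounds to $U_F$ through \eqref{uft} simultaneously with Potter's bounds for $1-F^t,\,1-G^t$ on the whole half-line, together with the bookkeeping needed to handle a merely eventually monotone $\bar\varphi$ and to split off the (harmless, bounded) part of each integral near $x=1$. These are exactly the manipulations already carried out in detail in the derivation of \eqref{P0}, so the argument is routine once that template is in place.
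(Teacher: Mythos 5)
Your proof is correct and takes essentially the same approach as the paper's: rewrite each integral as a Pareto$(1)$ expectation via the $F^t$ quantile, use Potter's bounds to produce a $t$-free majorant whose exponent is arbitrarily close to $\alpha(\varepsilon_0)-\varepsilon_0$, and finish with Condition~\ref{envelope_moment} directly for \eqref{1.5} and via Cauchy--Schwarz for \eqref{1.6}. The only cosmetic difference is that you carry out explicitly (integration by parts plus Potter) the bound $\sqrt{C^t(x)}\lesssim x^{1/(2\gamma_H)+2\delta}$, which the paper summarizes as ``$C^t$ is regularly varying with index $1/\gamma_H$.''
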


\begin{proof}[Proof of Lemma~\ref{P1}]
By eventual continuity of $F$ and $G,$ 
\[\int_1^\infty \varphi^2(x) (\gamma^t_0(x))^2 H^{1,t}(\dd x) = \int_1^\infty \varphi^2(x) \frac{F^t(\dd x)}{1 - G^t(x)}.\] By the regular variation property of $F$ and $G,$ and the argument similar to that used to prove \eqref{P0}, under Condition~\ref{envelope_moment} we derive that
\begin{equation}\label{pre2.5}\int_1^\infty \varphi^2(x) (\gamma^t_0(x))^2 H^{1,t}(\dd x) \le C\int_1^\infty \varphi^2(x) x^{1/\gamma_G - 1/\gamma_F - 1+\varepsilon} \dd x <\infty\end{equation} for some $C>0$ and large $t,$ so \eqref{1.5} is satisfied. Next, it is easy to show that $C^t(x)$ is regularly varying with index $1/\gamma_H,$ thus by the same argument for some $C>0$ and large $t$
\begin{eqnarray}\label{pre2.6}\int_{1}^\infty |\varphi(x)|\sqrt{C^t(x)} F^t(\dd x) &\le& C \int_{1}^\infty |\varphi(x)|\, x^{(2\gamma_G)^{-1} - (2\gamma_F)^{-1} - 1 + \varepsilon/2} \dd x\\ &\le&C \left(\int_1^\infty \varphi^2(x) x^{1/\gamma_G - 1/\gamma_F - 2 + \varepsilon} \dd x\right)^{1/2} < \infty,\nonumber\end{eqnarray} and so \eqref{1.6} is satisfied as well.
\end{proof}

 The main steps of the proof of Theorem~\ref{T4} use the last steps of the proof of Theorem 1.1 in~\cite{stute}. First, for a given $\epsilon>0$ let us choose a continuous function $\tilde \varphi$ vanishing outside of $[1, T],$ for some $T<\infty,$ and satisfying Condition~\ref{envelope_moment}, such that
\begin{equation}\label{onlyeps} \int_1^\infty (\varphi(x) - \tilde \varphi(x))^2 x^{\alpha(\varepsilon)} \dd x \le \epsilon, \end{equation} which is possible under Condition~\ref{envelope_moment}. Indeed, it suffices to choose $\tilde \varphi(x) = \varphi(x) I(x\le T),$ $x\ge 1.$
Note that by \eqref{onlyeps} it is possible to find such $\epsilon_1>0,$ vanishing together with $\epsilon$, such that
\begin{equation}\label{2.5}\int_1^\infty(\varphi - \tilde \varphi)^2 (\gamma^t_0)^2 \dd H^{1,t} \le \epsilon_1\end{equation} and
\begin{equation}\label{2.6}\int_1^\infty|\varphi - \tilde\varphi|\sqrt{C^t} \dd F^t \le \epsilon_1\end{equation} for large $t$ similarly to \eqref{pre2.5} and \eqref{pre2.6}, respectively.

Set $\varphi_1 = \varphi - \tilde \varphi.$ We prove that
\begin{equation}\label{trick}k^{1/2} \left(\int_1^\infty \varphi_1 \dd \amsmathbb{F}^t_{k,n} - \int_{1}^\infty \varphi_1 \dd F^t\right) = \overline{O}_{P}(\epsilon_1^{1/2}),\end{equation} which together with Theorem~\ref{theo:Rkn_term} and the bound 
\[\int_{1}^\infty |\varphi_1|\dd F^t < \epsilon_1\] following from \eqref{2.6} implies the conclusion of Theorem~\ref{T4}. Similarly to the first formula on p.436 in~\cite{stute}, we get 
\begin{align*}
k^{1/2}\int_1^\infty \varphi_1\, \dd ( \amsmathbb{F}^t_{k,n} -  F^t) &= k^{-1/2} \sum_{i=1}^k \left[\varphi_1(V_i^t)\delta_i^t \gamma_0^t(V_i^t) - \int_1^\infty \varphi_1 \dd F^t\right]\\
&\quad + k^{-1/2}  \sum_{i=1}^k \varphi_1(V_i^t)\delta_i^t \gamma_0^t(V_i^t) \big[\exp(B_{ik}^t + C_{ik}^t) - 1\big].\end{align*}
By \eqref{2.5}, the variance of the first summand in the right-hand side of the latter relation is not more than $2\epsilon_1,$ thus by Chebyshev's inequality the first summand is $\overline{O}_{P}(\epsilon_1^{1/2}).$ By the trivial bound $|\exp(x) - 1| \le |x| \exp(|x|),$ the second summand is bounded in absolute value by
\[k^{-1/2} \sum_{i=1}^k |\varphi_1(V_i^t)|\delta_i^t \gamma_0^t(V_i^t) (|B_{ik}^t| + |C_{ik}^t|) \exp\big[|B_{ik}^t| + |C_{ik}^t|\big].\]
By \eqref{bin},
\[|B_{ik}^t| \le \frac{1}{2k} \int_1^{V_i^t-} \frac{H^{0,t}_k(\dd z)}{(1 - H^t_k(z))^2} \le \frac{1}{2k} \int_1^{V_i^t-} \frac{H^{t}_k(\dd z)}{(1 - H^t_k(z))^2} \le \frac{1}{k(1 - H^t_k(V^t_{k,k}-))}\le 1.\] Now let us prove that \begin{equation}\label{supchik}\sup_{1\le i\le k} |C_{ik}^t| = \overline{O}_{P}(1)\end{equation} as well. 
Indeed, 
\[\sup_{1\le i\le k} |C_{ik}^t| \le \sup_{z \in [1, V^t_{k,k})} \left|\int_{1}^z \frac{H^{0,t}_k(\dd x)}{1 - H^t_k(x)} - \int_{1}^z \frac{H^{0,t}(\dd x)}{1 - H^t(x)}\right|.\] Theorem 2.1 in~\cite{zhou} yields an upper bound on the exceedance probability of the supremum on the right-hand side of the latter relation over some level $a$ that depends on $a$ alone (concretely, the bound can be taken to be $358\cdot a^{-2/3}$) and does not depend on distributional characteristics, which implies \eqref{supchik}.

Thus, since we proved $\sup_{1\le i\le k} |B_{ik}^t|+|C_{ik}^t| = \overline{O}_{P}(1),$ it remains to bound the following term from above
\[k^{-1/2} \sum_{i=1}^k |\varphi_1(V_i^t)|\delta_i^t \gamma_0^t(V_i^t) (|B_{ik}^t| + |C_{ik}^t|),\] which can be done through the same reasoning as in the corresponding part of the proof of Theorem 1.1 in~\cite{stute} replacing the assumption (2.10) therein with \eqref{2.6}, and obtaining a bound which is equal to a constant times $\epsilon_1$, which in particular is $\overline{O}_{P}(\epsilon_1)=\overline{o}_{P}(\epsilon_1^{1/2})$.
\end{proof}

\begin{proof}[Proof of Theorem~\ref{Tconsistency}]
We use the proof technique from Theorem 2.1 in~\cite{segers}. By Theorem~\ref{T4}, we have
\begin{equation}S_{k,n}(\varphi)  =  \frac{1}{k} \sum_{j=1}^k W_j + r_{k,n},\label{reformulation}\end{equation} where $\{W_j\}_{j=1}^k$ are i.i.d. given $Z_{n-k,n} = t,$ $\sqrt{k} r_{k,n}$ equals $\sqrt{k} r^t_{k,n}$ in distribution given $Z_{n-k,n} = t,$ and  $\sqrt{k} r^t_{k,n} = \overline{o}_{P}(1).$ Thus the last term in \eqref{reformulation} does not affect the asymptotic of the EKM integral. Note that by \eqref{trick}, we can consider $\tilde \varphi = \varphi I(\cdot \le T_0)$ instead of $\varphi$ in \eqref{reformulation} for some large $T_0,$ since $\varphi_1 = \varphi - \tilde \varphi$ does not affect the asymptotic of $S_{k,n}(\varphi)$ by Theorem~\ref{T4}.

Denote
\begin{equation}\label{wast}W^t = \varphi(V^t)\gamma_0^t(V^t)\delta^t + \gamma_1^t(V^t)(1 - \delta^t) - \gamma_2^t(V^t)\end{equation} with $(V^t, \delta^t) \stackrel{d}{=} (V_1^t, \delta_1^t)$ and consider the characteristic function of the normalized sum in \eqref{reformulation} given $Z_{n-k,n} = t.$ By \eqref{T4new}, we have,
\begin{eqnarray*}b_k^t(s) &:= & \amsmathbb{E}\left[\left.\exp\Big(\frac{is}{k}\sum_{j=1}^k W_j\Big)\right|Z_{n-k,n} = t\right]\\
 & = &  \left(\amsmathbb{E}\left[\left.\exp\Big(\frac{is}{k} W_1\Big)\right|Z_{n-k,n} = t\right]\right)^k =\left(\amsmathbb{E}\left[\exp\Big(\frac{is}{k} W^t\Big)\right]\right)^k\end{eqnarray*}
By properties of characteristic functions (more precisely, here we use Lemma A.1, \cite{segers}),
\begin{equation}\amsmathbb{E}\big[\exp(iz\, W^t)\big] = 1 + iz \amsmathbb{E} W^t + \eta(z, t),\label{characteristic1}\end{equation} with $\eta(z,t) \le 2|z|^{1+\delta} \amsmathbb{E}|W^t|^{1+\delta}$ and some $\delta>0$ which we may specify at a later stage. As mentioned above, replacing $\varphi$ with $\tilde \varphi$ does not change the asymptotic behavior of $S_{k,n}(\varphi),$ however, the boundedness of $\amsmathbb{E}|W^t|^{1+\delta}$ for large $t$ is trivial after such a replacement. Moreover, applying the argument used to prove \eqref{P0}, we derive that $\amsmathbb{E}|W^t|^{1+\delta}$ is bounded uniformly in $t\ge T.$
Thus, for $t$ larger than some $T > 1,$ $\eta(z,t)$
is bounded by $m|z|^{1+\delta}$ for some positive constant $m$ independent of $t.$ 
But then for every $s\in \amsmathbb{R}$
\begin{equation}\label{2.1}b_k^t(s) \to \exp(is\, \amsmathbb{E} W^t) \text{ as } k\to\infty \text{ uniformly in } t\ge T.\end{equation}
Now, it is straightforward to see that $\amsmathbb{E} \big[\gamma_1^t(V^t)(1 - \delta^t)\big] = \amsmathbb{E} \gamma^t_2(V^t),$ and thus by Condition~\ref{envelope_moment}, the argument used for proving (\ref{P0}), and dominated convergence theorem
\begin{equation}\amsmathbb{E} W^t = \amsmathbb{E} \big[\varphi(V^t)\gamma_0^t(V^t) \delta^t\big] = \int_1^\infty \varphi(x) F^t(\dd x) \to \int_1^\infty \varphi(x) F^\circ(\dd x), \quad t\to\infty.\label{2.2}\end{equation}
Finally, we can write
\[\amsmathbb{E}\left[\exp\Big(\frac{is}{k}\sum_{j=1}^k W_j\Big)\right] = \amsmathbb{E}[b_k^{Z_{n-k,n}}(s) I(Z_{n-k,n}< T)] + \amsmathbb{E}[b_k^{Z_{n-k,n}}(s) I(Z_{n-k,n}\ge T)],\]
where the first term vanishes since $Z_{n-k,n} \stackrel{P}{\to} \infty$ and $|b_k^t(s)| \le 1,$ and the second term converges to $\exp(is\, S_\circ(\varphi))$ by \eqref{2.1} and \eqref{2.2}. This completes the proof.
\end{proof}

\begin{proof}[Proof of Theorem~\ref{T1an}] The ideas of the proofs of Theorems~\ref{Tconsistency} and \ref{T1an} are similar. By Theorem~\ref{T4}, $r_{k,n}$ in the decomposition \eqref{reformulation} does not play a role in the asymptotic behavior of $\sqrt{k} S_{k,n}(\varphi),$ see also the first argument in the proof of Theorem~\ref{Tconsistency}. Denote for brevity
\[S_t(\varphi) = \int_{1}^\infty \varphi \dd F^t, \] 
and consider the characteristic function of the left-hand side of \eqref{main} minus $\sqrt{k}r_{k,n}$ given $Z_{n-k,n} = t.$ %which is $\overline{o}_{P}(1)$ given $Z_{n-k,n} = t.$
We have
\begin{align}\label{ckts}c_k^t(s) &:= \amsmathbb{E}\left[\left.\exp\Big(i s \,\sqrt{k}\Big[\frac{1}{k}\sum_{i=1}^k W_i - \int \varphi\, \dd F^{Z_{n-k,n}}\Big]\Big)\right|Z_{n-k,n} = t\right]\\ &= \left(\amsmathbb{E}\Big[\exp\Big(\frac{i s}{\sqrt{k}}\big[W^t - S_t(\varphi)\big]\Big)\Big]\right)^k,\nonumber %=: c^t_k(s) (1 + o(1)), 
\end{align} where $W^t$ and $\{W_i\}_{i=1}^k$ were defined in the proof of Theorem~\ref{Tconsistency}. By \eqref{2.2}, $\amsmathbb{E}[W^t - S_t(\varphi)] = 0,$ therefore by another application of Lemma A.1, \cite{segers},
\begin{equation}\label{lemmaa1}\amsmathbb{E}\big[\exp(iz\, [W^t - S_t(\varphi)])\big] = 1 - \frac{z^2}{2} \amsmathbb{E} [(W^t - S_t(\varphi))^2] + \zeta(z, t),\end{equation} with $|\zeta(z,t)| \le |z|^{2+\delta} \amsmathbb{E}|W^t - S_t(\varphi)|^{2+\delta}$ and some $\delta>0$ which can be chosen depending on the values of $\gamma_G$ and $\varepsilon$ according to Condition~\ref{envelope_moment}. As in the proof of Theorem~\ref{Tconsistency}, let us show that \begin{equation}\label{2+delta} \sup_{t\ge T}\amsmathbb{E}|W^t - S_t(\varphi)|^{2+\delta} <\infty\end{equation} for some $T\ge 1$ and $\delta>0.$ Indeed, by Minkowski's inequality,
\begin{eqnarray*}\nonumber\left(\amsmathbb{E}|W^t - S_t(\varphi)|^{2+\delta}\right)^\frac{1}{2+\delta} \le |S_t(\varphi)| + \left(\amsmathbb{E}|W^t|^{2+\delta}\right)^\frac{1}{2+\delta}, 
\end{eqnarray*} where $|S_t(\varphi)|$ is bounded uniformly for large $t$ by slight modification of \eqref{2.2} and $\amsmathbb{E}\big[|W^t|^{2+\delta}\big]$ can be handled similarly to the proof of Theorem~\ref{Tconsistency}.

Therefore, returning to \eqref{ckts}, we derive that for every $s\in \amsmathbb{R}$,  as $k\to\infty$  uniformly in $t\ge T$,
\begin{equation*}c_k^t(s) \to \exp(-s^2\amsmathbb{E} [(W^t - S_t(\varphi))^2] /2).\end{equation*} By \eqref{2+delta}, the argument used for proving \eqref{P0}, and the dominated convergence theorem, we have
\[\amsmathbb{E} [(W^t - S_t(\varphi))^2] \to \amsmathbb{E} [(W^\circ - S_\circ(\varphi))^2] = {\rm var}(W^\circ), \quad t\to\infty.\]
Repeating the last argument in the proof of Theorem~\ref{Tconsistency} completes the proof.
\end{proof}

\begin{proof}[The result mentioned in Remark \ref{Rreplacement} and its proof] 

In the next result, we replace in \eqref{main} the random centering sequence $\int \varphi \dd F^{Z_{n-k,n}}$ with its non-random counterpart $\int \varphi \dd F^{U_H(n/k)}.$ %, where $U_H$ is the tail quantile function of $H,$ i.e. 
%\[U_H(x) = \inf\{y: H(y) \ge 1 - 1/x\}, \quad x>1.\]
For this purpose, we need to impose the following condition on the tail quantile functions $U_F$ and $U_H:$ let a tail quantile function $U$ possess a derivative $U^\prime$ and let the following von Mises' convergence hold
\begin{equation}\label{prime}\lim_{t\to\infty} \frac{t U^\prime(t)}{U(t)} = \gamma,\end{equation}
which in itself is sufficient for $U$ to be regularly varying with tail index $\gamma$, and is standard in extreme value
theory (see e.g. Theorem 1.1.11 and Corollary 1.1.12 in~\cite{dehaan}). Assume also that $\varphi$ has a continuous and eventually monotone derivative $\varphi^\prime.$ Then it follows from Condition~\ref{envelope_moment}  and integration by parts, that for some $\varepsilon^\prime>0$
\begin{equation}\label{cond:phi_deriv}
 \int_1^\infty |\varphi^\prime(x)|\,x^{-1/\gamma_F + \varepsilon^\prime} \dd x < \infty.
\end{equation}
We use this relation in the proof of the theorem below. Finally, we need the following
\begin{definition}\label{def:envelope}
Consider a function $f: A \to \amsmathbb{R}$ with $A\subseteq \amsmathbb{R}.$ We call a continuous, positive and eventually monotone function $\phi: A\to \amsmathbb{R}$ an envelope of $f,$ if $\phi(x) \ge |f(x)|,$ $x\in A.$
\end{definition}
\noindent Note that if $f$ is continuous, nonzero and eventually monotone then $|f|$ is its envelope.

%Also, we need the following technical condition.

%\begin{condition}\label{cond:phi_deriv}
%Let $\varphi$ have a derivative $\varphi^\prime$ such that for some $\varepsilon^\prime>0$ 
%\begin{equation}
%    \label{onlyconditionnew} \int_1^\infty \bar{\varphi^\prime}(x)\,x^{-1/\gamma_F + \varepsilon^\prime} \dd x < \infty.
%\end{equation}
%\end{condition}

%\noindent Hereinafter $\bar{\varphi^\prime}$ denotes an envelope of $\varphi^\prime.$ If the behavior of $\varphi^\prime$ is regular enough, in particular, if $\varphi^\prime$ is { continuous and eventually monotone (in this case, $\bar{\varphi^\prime}$ can be chosen to be equal to $|\varphi^\prime|$ for large $x$),} then Condition~\ref{cond:phi_deriv} does not pose any additional complications, since it then follows from Condition~\ref{envelope_moment} { and integration by parts.}

\begin{theorem}\label{T2an} Assume the conditions of Theorem~\ref{T4}. Furthermore, assume $U_F$ and $U_H$ satisfy \eqref{prime}, and $\varphi$ has a continuous and eventually monotone derivative $\varphi^\prime.$ 
%satisfies Condition~\ref{cond:phi_deriv}. 
Then 
\begin{equation}\label{T2anassertion}\sqrt{k}\int \varphi\, \dd (F^{Z_{n-k,n}} - F^{U_H(n/k)}) \stackrel{P}{\to} 0.\end{equation}
\end{theorem}

\noindent As an immediate consequence of Theorems~\ref{T1an} and \ref{T2an} we get the following result.
\begin{corollary}\label{R1} Under the assumptions of  Theorem~\ref{T2an}, we have
\begin{equation}\sqrt{k}\int \varphi\, \dd(\amsmathbb{F}_{k,n} - F^{U_H(n/k)}) \stackrel{d}{\to} N(0, \sigma^2_\varphi).\label{seq}\end{equation}
\end{corollary}

\begin{proof}[Proof of Theorem \ref{T2an}] Let $\xi$ be a Pareto$(1)$-distributed random variable, independent of $\{Z_{i}\}_{i=1}^n.$ Without loss of generality, let us consider only $t$ such that $F$ is strictly monotone in the left neighborhood of $t.$ It is straightforward to verify that $F^t$ is the cdf of the random variable $U_F(s \xi )/U_F(s),$ where $s = U_F^{\leftarrow}(t) = (1 - F(t))^{-1},$ see also \eqref{uft}. Indeed,
\begin{eqnarray}\label{conjugate}\pr(U_F(s \xi)/U_F(s) \le x) = \pr\big(\xi \le s^{-1} U_F^{\leftarrow}(x t)\big) = 1 - \frac{1 - F(xt)}{1 - F(t)} = F^t(x).\end{eqnarray}
Denote \[\mu(s) = \amsmathbb{E}\,\varphi\left(\frac{U_F(s\xi)}{U_F(s)}\right).\] Thus, \eqref{T2anassertion} can be rewritten as
\[\sqrt{k}\big(\mu(U_F^{\leftarrow}(Z_{n-k,n})) - \mu(U_F^{\leftarrow}(U_H(n/k)))\big)\stackrel{P}{\to} 0,\] which is in accordance with relation (4.6) in~\cite{segers}. Denote for brevity $u(s, \xi) = U_F(s\xi)/U_F(s).$ By the mean value theorem, 
\begin{align}\label{meanvalue}|\mu(x) - \mu(y)| \le \amsmathbb{E}\left[\left|\varphi^\prime(x^\ast) \big(u(x, \xi) - u(y, \xi)\big)\right| \right],\end{align}
where $x^\ast$ lies between $u(x, \xi)$ and $u(y, \xi).$ 

Consider the asymptotic behavior of the expression within the modulo on the right-hand side of the latter relation.
Recall that a regularly varying function $U$ with index $\gamma$ is {\it normalized} if there exists a $T>1$ and a real measurable
function $\gamma(\cdot)$ on $[T;\infty)$ converging to $\gamma$, such that for $t\ge T,$
\begin{equation} U(t) = U(T) \exp\left(\int_T^t \gamma(u) \frac{\dd u}{u}\right).\label{normalised}\end{equation}
Clearly, the latter is satisfied for $U_F$ with $\gamma(t) = t U_F^\prime(t)/U_F(t)$ under the condition \eqref{prime}, and thus the conditions of Lemma 4.3 in~\cite{segers} hold. Also note that it is possible to select an envelope $\phi$ of $\varphi^\prime$ such that it satisfies \eqref{cond:phi_deriv}.
%Also note that we can choose an envelope $\bar{\varphi^\prime}$ such that  $\bar{\varphi^\prime}(x) \neq 0,$ $x\ge 1.$
Then by the first assertion of Lemma 4.3 in~\cite{segers}, % and properties of the envelope $\bar{\varphi^\prime}$,
we get
\begin{equation}|\varphi^\prime(x^\ast)| \le C \max(\phi(\xi^{\gamma_F-\varepsilon^\prime}), \phi(\xi^{\gamma_F+\varepsilon^\prime}))\label{segerslemma4.3a}\end{equation}
for every $\varepsilon^\prime>0,$ some $C>0$ independent of $\varepsilon^\prime,$ and large $x$ and $y$.
Next, consider the difference within the modulo on the right-hand side of \eqref{meanvalue}. By the relation (4.9) in Lemma 4.3, \cite{segers}, we get for the above $\varepsilon^\prime$ and large $x$ and $y$
\begin{equation}\label{segerslemma4.3}\Big|u\big(x, \xi\big) - u\big(y, \xi\big)\Big| \le 2\varepsilon^\prime |\log(x/y)| \xi^{\gamma_F+\varepsilon^\prime}.\end{equation}
Let $\{\xi_i\}_{i=1}^n$ be independent Pareto$(1)$-distributed random variables, and $\xi_{1,n}\le \ldots \le \xi_{n,n}$ be the order statistics of this sample such that $Z_{n-k,n} = U_H(\xi_{n-k,n}).$
Since the function $U_F^\leftarrow(U_H(x))$ tends to infinity as $x\to\infty,$ we derive by \eqref{meanvalue}, \eqref{segerslemma4.3a}, and \eqref{segerslemma4.3} for large $n$
\begin{align*}
&\sqrt{k}\Big|\mu(U_F^{\leftarrow}(Z_{n-k,n})) - \mu(U_F^{\leftarrow}(U_H(n/k)))\Big|\\ 
&\le 2\varepsilon^\prime C \sqrt{k} \left|\log\left(\frac{U_F^{\leftarrow}(U_H(\xi_{n-k,n}))}{U_F^{\leftarrow}(U_H(n/k))}\right) \right|\amsmathbb{E}\Big[\xi^{\gamma_F+\varepsilon^\prime} \max\big(\phi(\xi^{\gamma_F-\varepsilon^\prime}), \phi(\xi^{\gamma_F+\varepsilon^\prime})\big)\Big].
\end{align*}
The mathematical expectation in the latter relation is bounded for small $\varepsilon^\prime$ by \eqref{cond:phi_deriv}. Thus, if we show that \begin{equation}\label{biglog}\sqrt{k} \log\left(\frac{U_F^{\leftarrow}(U_H(\xi_{n-k,n}))}{U_F^{\leftarrow}(U_H(n/k))}\right)  = O_{\mathbb{P}}(1)\end{equation} then the proof is complete by letting $\varepsilon^\prime \to 0.$

But Smirnov's lemma, \cite{smirnov}, implies that $\xi_{n-k,n}$ is asymptotically normal as
\begin{equation}\frac{k^{3/2}}{n}(\xi_{n-k,n} - n/k) \stackrel{d}{\to} N(0,1).\label{smirnov}\end{equation} Notice also that the condition \eqref{prime} is equivalent to the following
\begin{equation*}\lim_{t\to\infty} \frac{t K^\prime(t)}{1-K(t)} = \frac{1}{\gamma},\end{equation*} where $U$ is the tail quantile function of cdf $K.$ Then by the latter and \eqref{prime}, the derivative of the function $f(x) = \log (U_F^{\leftarrow}(U_H(x))) = -\log(1 - F(U_H(x)))$ possesses the following property
\[\lim_{x\to\infty} x f^\prime(x) = \lim_{x\to\infty}\frac{U_H(x) F^\prime(U_H(x))}{1 - F(U_H(x))} \cdot \frac{x U_H^\prime(x)}{U_H(x)} = \frac{\gamma_H}{\gamma_F},\]
therefore we derive by \eqref{smirnov} and the delta-method,
\[\sqrt{k}\, \frac{\gamma_F}{\gamma_H}\log\left(\frac{U_F^{\leftarrow}(U_H(\xi_{n-k,n}))}{U_F^{\leftarrow}(U_H(n/k))}\right) \stackrel{d}{\to} N(0, 1).\] Hence \eqref{biglog} follows, which completes the proof.
\end{proof} \end{proof}

\begin{proof}[Proof of Theorem~\ref{T3an}] The proof technique is inspired by Theorem 4.5 in~\cite{segers}.
Using the notation of the proof of Theorem~\ref{T2an}, we get
\[\sqrt{k}\int \varphi \dd(F^{Z_{n-k,n}} - F^\circ) = \sqrt{k}\big(\mu(U_F^{\leftarrow}(Z_{n-k,n})) - \mu(\infty)\big),\] where
\[\mu(\infty):=\amsmathbb{E} \varphi(\xi^{\gamma_F}) = \int\varphi \dd F^\circ.\] Denoting $u(\infty, \xi) = \xi^{\gamma_F},$ we have by \eqref{meanvalue}
\begin{equation}\label{meanvalue1}\mu(x) - \mu(\infty) = \amsmathbb{E}\left[\varphi^\prime(x^\ast) \big(u(x, \xi) - u(\infty, \xi)\big) \right],\end{equation} with $x^\ast$ lying between $u(x, \xi)$ and $u(\infty, \xi).$ By Potter's bounds, for every $\varepsilon^\prime>0$ and large $t,$
\[\frac{1}{2}x^{\gamma_F-\varepsilon^\prime} \le \frac{U_F(tx)}{U_F(t)} \le 2 x^{\gamma_F+\varepsilon^\prime}.\] Since $\varphi^\prime$ is continuous and eventually monotone, the relation \eqref{cond:phi_deriv} holds. %for envelope $\bar{\varphi^\prime} = |\varphi^\prime|.$
Similar to \eqref{segerslemma4.3a}, we can find an envelope $\phi$ of $\varphi^\prime$ satisfying \eqref{cond:phi_deriv}, and then %and assuming w.l.o.g. that the envelope $\bar{\varphi^\prime}$ is positive (the case of $\bar{\varphi^\prime}(x) = 0$ is trivial because of eventual monotonicity of $\varphi^\prime$)
\begin{equation}|\varphi^\prime(x^\ast)| \le C \max\big(\phi(0.5\, \xi^{\gamma_F-\varepsilon^\prime}), \phi(2 \xi^{\gamma_F+\varepsilon^\prime})\big)\label{lemma4.6}.\end{equation} for some $C>0.$
Next, the conditions of Lemma 4.6, \cite{segers}, are satisfied under Condition~\ref{secondorder}, and therefore for some $K>0$ and large $x,$
\begin{equation}|u(x, \xi) - u(\infty, \xi)| = \left|\frac{U_F(x\xi)}{U_F(x)} - \xi^{\gamma_F}\right| \le K \xi^{\gamma_F+\varepsilon^\prime} |a(x)|.\label{lemma4.6a}\end{equation}

Piecing together (\ref{meanvalue1}-\ref{lemma4.6a}) yields
\begin{align}\nonumber
&\sqrt{k}\Big|\mu(U_F^{\leftarrow}(Z_{n-k,n})) - \mu(\infty)\Big|\\ 
&\le K C \sqrt{k}\, \big|a(U_F^{\leftarrow}(Z_{n-k,n}))\big|\,\amsmathbb{E}\Big[\xi^{\gamma_F+\varepsilon^\prime} \max\big(
\phi^(0.5\, \xi^{\gamma_F-\varepsilon^\prime}), \phi(2\xi^{\gamma_F+\varepsilon^\prime})\big)\Big],
\label{major}\end{align} for large $n,$ where the mathematical expectation in the latter relation, similarly to the analogous step in the proof of Theorem~\ref{T2an}, is bounded for small $\varepsilon^\prime$ by~\eqref{cond:phi_deriv}. Since the functions $a,$ $U_F^\leftarrow = (1 - F)^{-1}$ and $U_H$ are regularly varying and $\xi_{n-k,n}/(n/k) \stackrel{P}{\to} 1$ by Smirnov's Lemma~\eqref{smirnov}, we have
\begin{equation}\label{aratio}\frac{a(U_F^{\leftarrow}(Z_{n-k,n}))}{a(U_F^{\leftarrow}(U_H(n/k)))} = \frac{a(U_F^{\leftarrow}(U_H(\xi_{n-k,n})))}{a(U_F^{\leftarrow}(U_H(n/k)))} \stackrel{P}{\to} 1,\end{equation} which completes the proof in the case $\lambda = 0.$

Along with this, for $\lambda\neq 0$ we get
\begin{equation} \label{soappl}\varphi^\prime(x^\ast) \stackrel{a.s.}{\to} \varphi^\prime(\xi^{\gamma_F}), \quad \frac{u(x, \xi) - u(\infty, \xi)}{a(x)} \stackrel{a.s.}{\to} \xi^{\gamma_F} h_\rho(\xi), \quad x\to\infty,\end{equation}
where the former follows by the continuous mapping theorem and %a.e.
continuity of $\varphi^\prime,$ and Condition~\ref{secondorder} implies the latter. Therefore, by (\ref{lemma4.6}-\ref{major}) and the dominated convergence theorem we derive
\[\frac{\mu(x) - \mu(\infty)}{a(x)} \to \amsmathbb{E} \big[\varphi^\prime(\xi^{\gamma_F}) \xi^{\gamma_F} h_\rho(\xi)\big] = C(\gamma_F, \rho), \quad x\to\infty.\]
Finally, by the latter convergence and \eqref{aratio}
\[\sqrt{k}\big(\mu(U_F^{\leftarrow}(Z_{n-k,n})) - \mu(\infty)\big) \stackrel{P}{\to} C(\gamma_F, \rho) \lambda.\] The proof is complete.
\end{proof}
\newpage
\section{Proofs of Section~\ref{genresidual}}\label{AppB}
\begin{proof}[Proof of Theorem~\ref{T1gen}]
The idea of the proof is similar to Theorem 2.1 and Theorem 4.1 in~\cite{segers}. Let us first prove consistency of $R^f_{k,n}.$ 
It is known that the random variables $\{\xi_{n-j+1,n}/\xi_{n-k,n}\}_{j=1}^k$ are independent of $\xi_{n-k,n}$ and distributed like $\{\xi^\ast_{k-j+1,k}\}_{j=1}^k,$ where $\xi_1^\ast, \ldots, \xi_{k}^\ast$ are independent Pareto$(1)$-distributed. Thus, we have for the characteristic function of $R^f_{k,n}$
\begin{eqnarray*}\amsmathbb{E}\big[\exp(isR^f_{k,n})\big] = \amsmathbb{E}\left[\exp\left\{\frac{is}{k}\sum_{j=1}^k\vartheta \Big(f\Big(\frac{\xi_{n-j+1,n}}{\xi_{n-k,n}}, \xi_{n-k,n}\Big)\Big)\right\}\right] = \amsmathbb{E}\big[a_k(\xi_{n-k,n})\big],
\end{eqnarray*}
where $s\in\amsmathbb{R}$ and
\[a_k(t) = \amsmathbb{E}\left[\exp\left\{\frac{is}{k}\sum_{j=1}^k\vartheta \big(f\big(\xi_j^\ast, t\big)\big) \right\}\right].\] Note that for fixed $t$, $\big\{\vartheta (f(\xi_j^\ast, t))\big\}_{j=1}^k$ are independent random variables distributed like $\vartheta (f(\xi, t)),$ therefore
\[a_k(t) = \left(\amsmathbb{E}\left[\exp\Big\{\frac{is}{k}\vartheta (f(\xi, t))\Big\}\right]\right)^k.\] Similarly to \eqref{characteristic1}, we get for $z\in \amsmathbb{R}$
\begin{equation*}\amsmathbb{E}\big[\exp(iz\, \vartheta (f(\xi, t)))\big] = 1 + iz \amsmathbb{E} \big[\vartheta (f(\xi, t))\big] + \eta(z, t),\end{equation*} with $\eta(z,t) \le 2|z|^{1+\delta} \amsmathbb{E}|\vartheta (f(\xi, t))|^{1+\delta}$ for $\delta>0$ from \eqref{cond4}. Repeating the argument from the proofs of Theorem \ref{T2an} and Theorem \ref{T3an}, we can find an envelope $\phi$ of $\vartheta$ satisfying \eqref{cond4}. Then by \eqref{cond2}, %continuity and eventual monotonicity of $\vartheta,$ assuming as before that $\bar\vartheta$ is positive,
we derive for $t\ge T$ with $T$ from Definition \ref{D5} \begin{equation}\label{b05}|\vartheta(f(\xi,t))|\le C \max\big[\phi(g_1(\xi)), \phi(g_2(\xi))\big],\end{equation} and thus by \eqref{cond4}
\begin{equation}\label{1plusdelta}\limsup_{t\to\infty} \amsmathbb{E}\big[|\vartheta (f(\xi, t))|^{1+\delta}\big]<\infty.\end{equation}
Summarizing the above, we have
\[a_k(t) \to \exp\big\{i s \amsmathbb{E}[\vartheta (f(\xi,t))]\big\}, \quad k\to\infty, \]
uniformly in $t\ge T.$
Note that by \eqref{cond1} and %a.e.
continuity of $\vartheta $, $\vartheta (f(\xi,t))\stackrel{a.s.}{\to} \vartheta (g(\xi)), t\to\infty,$ and therefore by \eqref{b05}, \eqref{cond4} and the dominated convergence theorem, \begin{equation}\label{muht}\mu_f(t) = \amsmathbb{E} \big[\vartheta (f(\xi, t))\big] \to \amsmathbb{E}[\vartheta (g(\xi))],\quad  t\to\infty.\end{equation} 
 Appealing to the latter and repeating the last argument in the proof of Theorem~\ref{Tconsistency} completes the proof of consistency.\\

Next, we prove \eqref{T1genassertion}. 
Let us consider the characteristic function of $\sqrt{k}\big(R^f_{k,n} - \mu_f(\xi_{n-k,n})\big).$ We have for $s\in\amsmathbb{R}$
\[\amsmathbb{E}\Big[\exp\big(is\sqrt{k}(R^f_{k,n} - \mu_f(\xi_{n-k,n}))\big)\Big] = \amsmathbb{E}\big[b_k(\xi_{n-k,n})\big],\]
where
\[b_k(t) = \amsmathbb{E}\left[\exp\left\{\frac{is}{\sqrt{k}}\sum_{j=1}^k\left[\vartheta \big(f\big(\xi_j^\ast, t\big)\big) - \mu_f(t)\right]\right\}\right].\]
Denote $Z(t) = \vartheta \big(f\big(\xi, t\big)\big) - \mu_f(t)$ and $Z_j^\ast(t) = \vartheta \big(f\big(\xi_j^\ast, t\big)\big) - \mu_f(t),$ $j=1,\ldots, k.$ As above, for fixed $t$ $\{Z_j(t)\}_{j=1}^k$ are independent zero-mean random variables distributed like $Z(t),$ and thus we can rewrite 
\begin{equation}\label{chararepr}b_k(t) = \amsmathbb{E}\left[\exp\left\{\frac{is}{\sqrt{k}}\sum_{j=1}^k Z_j^\ast(t)\right\}\right] = \left(\amsmathbb{E}\left[\exp\left\{\frac{is}{\sqrt{k}}Z(t)\right\}\right]\right)^k.\end{equation}
Similarly to \eqref{lemmaa1}, we get for $z\in \amsmathbb{R}$
\begin{equation}\label{chara}\amsmathbb{E}\big[\exp(iz\, Z(t))\big] = 1 - \frac{z^2}{2} \amsmathbb{E} [(Z(t))^2] + \zeta(z, t),\end{equation} with $|\zeta(z,t)| \le |z|^{2+\delta^\prime}\, \amsmathbb{E}|Z(t)|^{2+\delta^\prime}$ for some $\delta^\prime>0$. By application of \eqref{cond2}, \eqref{cond4} with $\delta^\prime = \delta-1$ and $\delta>1$ defined in Theorem~\ref{T1gen}, and \eqref{muht}, we get
\begin{equation*}\label{2plusdelta}\limsup_{t\to\infty} \amsmathbb{E}\big[|Z(t)|^{2+\delta^\prime}\big]<\infty.\end{equation*}
Piecing together \eqref{chararepr}, \eqref{chara} and the latter relation, we derive
\[b_k(t) \to \exp\big\{-\amsmathbb{E}[Z^2(t)] s^2/2\big\}, \quad k\to\infty,\]
uniformly in $t\ge T.$ Along with this, by an argument similar to that we used to prove \eqref{muht}, we get by the dominated convergence Theorem~\[\amsmathbb{E}[Z^2(t)] \to {\rm var}\big[\vartheta (g(\xi))\big], \quad t\to\infty.\] Repeating the last argument in the proof of Theorem~\ref{Tconsistency} completes the proof.

\end{proof}

\begin{proof}[Proof of Theorem~\ref{T2gen}]
The proof follows along the lines of Theorem~\ref{T3an}. Taking into account Theorem~\ref{T1gen}, it suffices to show that
\begin{equation}\label{T2gen2}\sqrt{k}\big(\mu_f(\xi_{n-k,n}) - \amsmathbb{E}[\vartheta (g(\xi))]\big) \stackrel{P}{\to}C\,\lambda,
\end{equation} Recall the notation $\mu_f(\infty) := \amsmathbb{E}[\vartheta (g(\xi))].$ Similarly to \eqref{meanvalue} and \eqref{meanvalue1}, we get
\begin{equation}\label{T2gen_1}\mu_f(t) - \mu_f(\infty) = \amsmathbb{E}\big[\vartheta ^\prime(t^\ast) \big(f(\xi, t) - g(\xi)\big)\big],\end{equation} where $t^\ast$ lies between $f(\xi, t)$ and $g(\xi).$ Repeating the argument from the proofs of Theorems~\ref{T2an}, \ref{T3an} and \ref{T1gen}, we can find an envelope $\phi$ of $\vartheta^\prime$ satisfying \eqref{cond8}. By \eqref{cond2}, %and assuming that $\bar{\vartheta^\prime}$ is positive as before, 
we get for $t\ge T$ %by the properties of envelope 
\begin{equation*}\label{T2gen_2}|\vartheta ^\prime(t^\ast)| \le C\max\big(\phi(g_1(\xi)), \phi(g_2(\xi))\big)\end{equation*} for some $C>0.$
Taking into account \eqref{cond6}, we derive the following bound
\begin{equation}\label{T2gen_3} \sqrt{k}\left|\mu_f(\xi_{n-k,n}) - \mu_f(\infty)\right| \le C \sqrt{k} |\hat a(\xi_{n-k,n})|\, \amsmathbb{E}\Big[\hat g(\xi) \max\big\{\phi(g_1(\xi)), \phi(g_2(\xi))\big\}\Big]\end{equation} for large $n,$ where the latter expectation is bounded by \eqref{cond8}, which completes the proof in the case $\lambda = 0.$

Next, for $\lambda\neq0,$ %a.e.
continuity of $\vartheta ^\prime$ together with the continuous mapping theorem and \eqref{cond5} imply that
\[\vartheta ^\prime(t^\ast) \stackrel{a.s.}{\to} \vartheta ^\prime(g(\xi)), \quad \frac{f(\xi, t) - g(\xi)}{\hat a(t)} \stackrel{a.s.}{\to} \tilde g(\xi), \quad t\to\infty.\] Therefore by (\ref{T2gen_1}, \ref{T2gen_3}) and the dominated convergence theorem
\[\frac{\mu_f(t) - \mu_f(\infty)}{\hat a(t)} \to \amsmathbb{E}\big[\tilde g(\xi) \vartheta ^\prime(g(\xi))\big] = C, \quad t\to\infty.\]
Finally, the latter convergence and limit relation $\sqrt{k} \hat a(\xi_{n-k,n})\to\lambda$ imply \eqref{T2gen2}. The proof is complete.
\end{proof}
\newpage
\section{Proofs of Section~\ref{sec:exten}} \label{AppC}

\begin{proof}[Proof of Proposition~\ref{P2}]
(i) 
\begin{enumerate}
\item First, assume $\gamma_F, \gamma_G>0.$ Then $H$ is a regularly varying function with index $-1/\gamma_H = -1/\gamma_F - 1/\gamma_G,$ and thus $U_H(y)$ is a regularly varying function with index $\gamma_H.$ By the properties of regularly varying functions (see e.g. Proposition 1.5.7 (ii) in~\cite{bingham}), $1 - \tilde F$ is regularly varying function with index $-\alpha_F = -\gamma_H/\gamma_F,$ and $\alpha_F>0.$ 
\item Next, assume $\gamma_F, \gamma_G<0.$ In this case, $F$ and $G$ have the finite right endpoints $\tau_F$ and $\tau_G,$ and $\tau_H = \min(\tau_F, \tau_G).$ So, $F$ and $G$ can be represented as follows
\[1 - F(\tau_F - x) = x^{-1/\gamma_F} \ell_F(x), \quad 1 - G(\tau_G - x) = x^{-1/\gamma_G} \ell_G(x), \quad x\ge 0,\]
with $\ell_F, \ell_G$ slowly varying at 0, and thus for $x\ge 0$
\[1 - H(\tau_H - x) = \left\{\begin{array}{cc} x^{-1/\gamma_F-1/\gamma_G} \ell_H(x), & \tau_F = \tau_G;\\
x^{-1/\gamma_F} \ell_H(x), & \tau_F < \tau_G; \\
x^{-1/\gamma_G} \ell_H(x), & \tau_F > \tau_G;\end{array}\right.\] with possibly different slowly varying functions $\ell_H$ in each line. It follows that $H$ belongs to the Weibull max-domain of attraction, i.e. $\gamma_H$ is negative. Note that if $\tau_F>\tau_G$ ($\tau_F<\tau_G$) then $\gamma_F$ ($\gamma_G$) can be thought of as equal to $-\infty$ and we can keep the convenient formula $1/\gamma_H = 1/\gamma_F + 1/\gamma_G.$ Based on this relation, we get
\[U_H(y) = \tau_H - y^{\gamma_H} \ell^\sharp_H(y), \]
with $\ell^\sharp_H$ slowly varying at infinity. Assume first $\tau_F \le \tau_G,$ then $\tau_F$ and $\tau_H$ coincide, and we derive by the properties of regularly varying functions
\[1 - \tilde F(y) = 1 - F(\tau_F - y^{\gamma_H} \ell^\sharp_H(y)) = y^{-\gamma_H/\gamma_F} \ell_{\tilde F}(y)\] with some $\ell_{\tilde F}(y)$ slowly varying at infinity. Note that $\alpha_F = \gamma_H/\gamma_F$ is positive in this case. Next assume that $\tau_F>\tau_G,$ then $\tau_H = \tau_F.$ In this case $\tilde F$ is not a proper cdf because it cannot exceed $F(\tau_G)$. Indeed, we get
\[1 - \tilde F(y) = 1 - F(\tau_G - y^{\gamma_H} \ell^\sharp_H(y)) \to 1 - F(\tau_G) > 0, \quad y\to\infty,\] so actually $1 - \tilde F$ is slowly varying at infinity and thus $\alpha_F = 0.$ 
\item Finally, consider the case $\gamma_F = \gamma_G = 0.$ It may be assumed that $\tau_F = \tau_G$ because other cases are degenerate as above. Then by the von Mises representation (see e.g. Theorem 1.2.6 in~\cite{dehaan}), we get for $x<\tau_F$
\begin{eqnarray}\nonumber 1 - H(x) & = & (1-F(x))(1-G(x)) = \exp\left(-\int_{x_0}^x \frac{\dd s}{f(s)} + c_1(x)\right) \exp\left(-\int_{x_0}^x \frac{\dd s}{g(s)} + c_2(x)\right) \\ &=&  \exp\left(-\int_{x_0}^x \Big(\frac{1}{f(s)} + \frac{1}{g(s)}\Big)\dd s + c_1(x) + c_2(x)\right)\label{vonmisesh}\end{eqnarray} with $\lim_{x\uparrow \tau_F} c_i(x) = c_i>0,$ $i=1,2,$ $f,g$ positive, and $\lim_{x\uparrow \tau_F}f^\prime(x) = \lim_{x\uparrow \tau_F}g^\prime(x) = 0$. Since $h = fg/(f+g)$ is such that $\lim_{x\uparrow \tau_F}h^\prime(x) = 0,$ $H$ belongs to the Gumbel max-domain of attraction, i.e. $\gamma_H = 0,$ by Theorem 1.2.6, \cite{dehaan}.

The neat formula $\alpha_F = \gamma_H/\gamma_F$ does no longer hold in this case since $\gamma_F = \gamma_H = 0.$ Nevertheless, $\alpha_F$ can take both positive (if, e.g., $F$ and $G$ are exponential cdfs) and zero (if, e.g., $F$ is exponential cdf and $G$ is Gaussian cdf) values.
\end{enumerate}

(ii) Let us start from the case $\gamma_F>0.$ However, the proof is similar in all three cases. We have 
\[1 - \tilde F(x) = 1 - F(U_H(x)) = x^{-\alpha_F} \ell_{\tilde F}(x)\] with $\ell_{\tilde F}(x)$ slowly varying at infinity. By the definition of $U_F,$ the latter implies
\begin{equation} \label{uhuf} U_H(x) = U_F\big(x^{\alpha_F} (\ell_{\tilde F}(x))^{-1}\big),\end{equation} which holds independently of the value of $\gamma_F.$ Next, $U_F$ is regularly varying with index $\gamma_F,$ thus by the properties of regularly varying functions; see, e.g., Theorem 1.5.7 (ii) in~\cite{bingham}, $U_H$ is regularly varying with index $\gamma_F \alpha_F>0$, which completes the proof in case $\gamma_F>0.$ 

Now assume $\gamma_F<0.$ It is easily follows from Proposition~\ref{P2} (i) %Example~\ref{Ex2}
that if $\alpha_F>0$ then $\tau_F \le \tau_G$ (to prove this, the condition $\gamma_G<0$ is not necessary). Thus, $\tau_H = \tau_F.$
By \eqref{uhuf}, we derive
\[U_H(x) = \tau_F - f\big(x^{\alpha_F} (\ell_{\tilde F}(x))^{-1}\big),\] where $f$ is a regularly varying function with index $\gamma_F$ by the properties of $U_F.$ Applying again Theorem 1.5.7 (ii), \cite{bingham}, we get that $U_H = \tau_F - h(x),$ where $h(x)$ is a regularly varying function with index $\gamma_F \alpha_F<0,$ which completes the proof in case $\gamma_F<0.$

Finally, for the case $\gamma_F = 0,$ note that the positivity of $\alpha_F$ implies $\tau_F = \tau_H$ independently of the value of $\gamma_F.$ Then, the proof can becarried out similarly to the first and second cases if $\tau_F=\infty$ and $\tau_F<\infty,$ respectively.
\end{proof}

\begin{proof}[Proof of Theorem~\ref{T1genekmi}]
The proof relies on the proofs of Theorem~\ref{theo:Rkn_term}, Theorem~\ref{T4}, Theorem~\ref{Tconsistency}, and Theorem~\ref{T1an} under replacement of $\{Z_i\}$ with $\{\xi_i\},$ $\{Z_{n-i,n}/Z_{n-k,n}\}$ with $\{f(\xi_{n-i,n}/\xi_{n-k,n}, \xi_{n-k,n})\},$ and $F,$ $G,$ $H$ etc. with their tilde-versions. We also use $g_1$ and $g_2$ to bound the argument of $\vartheta$ in the analogue of \eqref{P0} and similar situations instead of Potter's bounds. Consequently, we restrict ourselves to indicating the differences.

First, Lemma~\ref{L1} also holds for {  the pairs} $\{(\xi_i^\ast, \delta_i^t)\}.$ Indeed, since $H$ is eventually continuous, for large $n$ we get $\xi_{n-i+1,n} = (1 - H(Z_{n-i+1,n}))^{-1},$ $i=1, \ldots, k.$ Hence we can choose \[\xi_i^\ast = \frac{1-H(t)}{1 - H(tV_i^t)}, \quad i=1, \ldots, k,\] where the joint distribution of $\{\xi^\ast_{k - i+ 1, k}\}_{i=1}^k$ coincides with the joint distribution of $\{\xi_{n-i+1,n}/\xi_{n-k,n}\}_{i=1}^k$ { (and thus does not depend on $t$),} and therefore pairs $\{(\xi_i^\ast, \delta_i^t)\}$ are i.i.d as well.

The second difference relates to the argument used in the proof of Theorem~\ref{theo:Rkn_term} to show boundedness of certain integrals and first applied to prove \eqref{P0}. Indeed, we cannot use that argument as it is, since for that we would need to impose some technical conditions on $f,$ $g_1$ and $g_2$ like monotonicity and continuity. Let us start with the case of zero $\alpha_F$. Consider as an example the version of the relation \eqref{P0} and assume $\alpha_F = 0$. By the monotone density theorem, see e.g. Theorem 1.7.2 in~\cite{bingham}, 
\[\lim_{t\to\infty}\frac{t \tilde F^\prime(t)}{1 - \tilde F(t)} = 0.\] By Potter's bounds, the index of regular variation of $\tilde F^\prime$ cannot be more than $-1$ since the lower Potter bound must be integrable. Next, by Karamata's theorem, see Theorem 1.5.11 ibid., it cannot be less than $-1,$ since otherwise $\alpha_F<0.$ Thus applying Potter's bounds once more, we get for some positive $\delta$ and large $t$ 
\begin{equation}(\tilde F^t)^\prime(w) = \frac{\tilde F^\prime(w t)}{\tilde F^\prime(t)}\cdot\frac{t\tilde F^\prime(t)}{1 - \tilde F(t)} \le w^{-1+\delta}, \quad w\ge 1.\label{b11}\end{equation}
If $\alpha_F$ is positive then the regular variation property of $(\tilde F^t)^\prime$ follows immediately from the monotone density theorem, therefore, applying Potter's bounds, we get \begin{equation}\label{S4potter}|(\tilde F^t)^\prime(w)| \le A w^{- \alpha_F-1+\delta}, \quad w\ge 1,\end{equation} for some $A, \delta>0$ and large $t.$ Note that \[|\tilde \theta^t| = \int_1^\infty\int_1^w |\vartheta^t_f(w)| \frac{1}{(1 - \tilde G^t(v))} \tilde G^t(\dd v) (\tilde F^t)^\prime(w) \dd w.\]
By (\ref{b11}-\ref{S4potter}) and Potter's bound applied to $1 - \tilde G^{t},$ 
 we derive for large $t$ and some $C_1, C_2, \delta^\prime>0$ similarly to \eqref{a9}
\begin{align}\nonumber &|\vartheta^t_f(w) (\tilde F^t)^\prime(w)| \int_1^w  \frac{1}{(1 - \tilde G^t(v))} \tilde G^t(\dd v)  \le  C_1 I(w\in[1,1+\delta^\prime]) \\\nonumber &\quad+\, C_2 \big|\vartheta(f(w,t)) (\tilde F^t)^\prime(w)\big| \log(w) I(w>1+\delta^\prime)\\ &\quad\le  C_1 + A\,C_2 \max\big(\phi(g_1(w)), \phi(g_2(w))\big) \log (w) \,w^{-\alpha_F-1+\delta}, \label{inequality4.7}\end{align} where $\phi$ is an envelope of $\vartheta$ satisfying Condition \ref{varthetaonlycondition}.
The function in the right-hand side of \eqref{inequality4.7} is integrable by Condition~\ref{varthetaonlycondition}, and so the dominated convergence theorem can be applied to prove the convergence of $\tilde\theta^t.$

\end{proof}

\begin{proof}[Proof of Theorem~\ref{T2genekmi}]
By Theorem~\ref{T1genekmi}, it suffices to show
\begin{equation}\label{mufxi}
\sqrt{k}\big(\mu_f(\xi_{n-k,n}) - \mu_f(\infty)\big) \stackrel{P}{\to}\mu.
\end{equation}
We have,
\begin{eqnarray*}
\mu_f(\xi_{n-k,n}) - \mu_f(\infty) & = & \int \vartheta(f(y, \xi_{n-k,n})) \tilde F^{\xi_{n-k,n}}(\dd y) - \int \vartheta(g(y)) \tilde F^\circ(\dd y) \\   
& = & \int \big(\vartheta(f(y, \xi_{n-k,n})) - \vartheta(g(y))\big) \tilde F^{\xi_{n-k,n}}(\dd y) + \int \vartheta_g \dd(\tilde F^{\xi_{n-k,n}} - \tilde F^\circ) \\
&=:& I_1 + I_2.\end{eqnarray*}
The proof of the relation
\[\sqrt{k} I_2 \stackrel{P}{\to}\lambda\int x \vartheta_g^\prime(x) h_\rho(x^{\alpha_F})\tilde F^\circ(\dd x)\]
actually coincides with the proof of Theorem~\ref{T3an} under replacement of $\varphi,$ $\gamma_F,$ $F^\circ,$ etc. with $\vartheta_g,$ $1/\alpha_F,$ $\tilde F^\circ,$ etc., respectively. Here we also use an analogue of \eqref{cond:phi_deriv}, for handling the behavior of $\vartheta_g^\prime$:
\begin{equation*}
     \int_1^\infty |\vartheta_g^\prime(x)|\,x^{-\alpha_F + \varepsilon} \dd x < \infty,
\end{equation*} 
which follows from Condition~\ref{varthetaonlycondition}, integration by parts, and continuity and eventual monotonicity of $\vartheta_g^\prime.$

So, let us consider $I_1.$ As in the proofs above, let us take an envelope $\phi$ of $\vartheta^\prime$ satisfying Condition~\ref{varthetaprime}. Similarly to (\ref{T2gen_1}- \ref{T2gen_3}), we have by \eqref{S4potter} for large $n$ and some $C, C_1>0$,
\begin{eqnarray}\nonumber \sqrt{k} |I_1| &= & \sqrt{k}\int |\vartheta(f(y, \xi_{n-k,n})) - \vartheta(g(y))| \tilde F^{\xi_{n-k,n}}(\dd y) \\ \label{boringlabel} &\le&C \sqrt{k} \int \max\big( \phi(g_1(y)), \phi(g_2(y)) \big) |f(y, \xi_{n-k,n}) - g(y)| \tilde F^{\xi_{n-k,n}}(\dd y) \\
&\le& C_1 \sqrt{k} |\hat a(\xi_{n-k,n})| \cdot \int \hat g(y) \max\big( \phi(g_1(y)), \phi(g_2(y)) \big) y^{-\alpha_F-1+\varepsilon^\prime}\dd y, \nonumber\end{eqnarray}
where the last expression is finite by the relation $\sqrt{k}\hat a(\xi_{n-k,n}) \stackrel{P}{\to}\hat \lambda$ and Condition~\ref{varthetaprime} and vanishes if $\hat \lambda = 0,$ which completes the proof for this case. Moreover, \eqref{boringlabel} implies
\begin{align} \label{secondboringlabel} &\frac{1}{|\hat a(t)|}\Big| \big[\vartheta(f(y, t)) - \vartheta(g(y))\big](\tilde F^t)^\prime(y)\Big|\\
&\le C_1 \hat g(y) \max\big( \phi(g_1(y)), \phi(g_2(y)) \big) y^{-\alpha_F-1+\varepsilon^\prime}.\nonumber
\end{align}

Next, for $\hat \lambda\neq 0,$ we get given $\xi_{n-k,n} = t$
\begin{eqnarray*} I_1 = \int \big(\vartheta(f(y, t)) - \vartheta(g(y))\big) (\tilde F^{t})^\prime(y) \dd y \\
 = \int \vartheta^\prime(t^\ast)\big(f(y, t) - g(y)\big) (\tilde F^{t})^\prime(y) \dd y,\end{eqnarray*} where $t^\ast$ lies between $f(y,t)$ and $g(y).$ The regular variation property of $(\tilde F)^\prime$ and %a.e.
 continuity of $\vartheta^\prime$ together with the continuous mapping theorem imply for every $y\ge1$
 \[\vartheta^\prime(t^\ast) %\stackrel{a.e.}
 {\to}\vartheta^\prime(g(y)), \quad (\tilde F^{t})^\prime(y) \to (\tilde F^\circ)^\prime(y), \quad t\to\infty.\] %where $\stackrel{a.e.}{\to}$ denotes convergence almost everywhere.
 Therefore, by (\ref{boringlabel}-\ref{secondboringlabel}), the dominated convergence theorem and Condition~\ref{second_order_2},
 \[\frac{1}{\hat a(t)} I_1 \to \int \vartheta^\prime(g(y)) \tilde g(y) \tilde F^\circ(\dd y), \quad t\to\infty,\] which together with the limit relation $\sqrt{k} \hat a(\xi_{n-k,n})\stackrel{P}{\to}\hat\lambda$ implies
 \[\sqrt{k} I_1 \stackrel{P}{\to}\hat{\lambda} \int \vartheta^\prime(g(y)) \tilde g(y) \tilde F^\circ(\dd y).\] The proof is complete.
\end{proof}
\newpage
\section{Proofs of Section~\ref{sec:evi_est}}\label{AppD}
\begin{proof}[Proof of Theorem~\ref{Tmomentconsistency}]
First, let us consider the case $\gamma_F>0.$ Then $\alpha_F$ is positive and we derive by Proposition~\ref{P2} (ii) that $G$ and $H$ have regularly varying tails as well, and thus Theorem~\ref{Tconsistency} can be applied. The assumption $\alpha_F>1/2$ (see also the remark below Theorem~\ref{Tmomentconsistency}) implies Condition~\ref{envelope_moment} for the functions $\varphi_\ell(x) = (\log x)^\ell,$ $\ell = 1,2,$ hence the assumptions of Theorem~\ref{Tconsistency} hold for these two functions, obtaining
\[\amsmathbb{M}_{k,n}^{(1)} \xrightarrow{P} \gamma_F, \quad \amsmathbb{M}_{k,n}^{(2)} \xrightarrow{P} 2\gamma^2_F.\] Thus, the result follows from the properties of convergence in probability and continuous mapping theorem.

The proof for the case $\gamma_F\le 0$ is a bit more challenging and is based on the application of Theorem~\ref{T1genekmi}. Note that since $\alpha_F>0$ Proposition~\ref{P2} (ii) implies $\gamma_H\le 0$ and $\tau_H = \tau_F.$ As in Section~\ref{Sectionexamples}, let us consider a candidate function
\begin{equation} \label{candidatef}f(x,t) = \frac{\log U_H(tx) - \log U_H(t)}{a_H(t)/U_H(t)},\end{equation} and therefore by \eqref{formoment} $g(x) = h_{\gamma_H}(x),$ where $h_{\gamma}$ is defined by \eqref{h}. The candidates for $g_1$ and $g_2$ can be found in Section~\ref{Sectionexamples} as well; see~\eqref{g1g2} for the case $\gamma_F<0$ (one should replace $\gamma$ with $\gamma_H$ there), whereas for $\gamma_F=0$ one can use 0 and $\delta x^\delta$ as $g_1$ and $g_2,$ respectively, for some $\delta\in(0,1)$ since $f(x,t)$ is increasing with respect to $x$.

Consider $\vartheta_1(x) = x$ and $\vartheta_2(x) = x^2.$ By the relation $\alpha_F+\alpha_G = 1$ we get $\alpha_G<1/2$ and thus, Condition~\ref{varthetaonlycondition} holds for $\vartheta_1$ and $\vartheta_2,$ but one should select a properly small $\delta$ in case $\gamma_F=0.$ 
Next, the condition on existence of eventually monotone density for $\tilde F$ can be omitted because it is only needed to prove \eqref{inequality4.7} and similar relations. 
Since $f(w,t)$ is monotone by $w$
and $\tilde F$ is regularly varying, the argument similar to the proof of \eqref{P0} can be used to prove the convergence of $\tilde\theta^t$ and  similar expressions. Therefore, applying Theorem~\ref{T1genekmi} with { the} necessary changes indicated above, we get
\begin{equation}\label{speedexample}\frac{\amsmathbb{M}_{k,n}^{(\ell)}}{\big(a_H(U_H^{\leftarrow}(Z_{n-k,n}))/Z_{n-k,n}\big)^\ell} \xrightarrow{P} \int_1^\infty \big(h_{\gamma_H}(y)\big)^\ell \tilde F^\circ(\dd y) = \frac{1}{\alpha_F^\ell}\amsmathbb{E} \big(h_{\gamma_F}(\xi)\big)^\ell, \quad \ell = 1,2,\end{equation} where $\xi$ is a Pareto(1) random variable and 
\[\amsmathbb{E} \big(h_{\gamma_F}(\xi)\big) = \frac{1}{1 - \gamma_F}, \quad \amsmathbb{E} \big(h_{\gamma_F}(\xi)\big)^2 = \frac{2}{(1 - \gamma_F)(1 - 2\gamma_F)}.\]
An application of the properties of convergence in probability and the continuous mapping theorem completes the proof.
\end{proof}

\begin{proof}[Proof of Theorem~\ref{Tmomentan}]
First, let us prove (i). We start with the following key two-dimensional result.
\begin{lemma}\label{L2dim} Assume the conditions of Theorem~\ref{Tmomentan} (i). Then
\[\sqrt{k}\big(\amsmathbb{M}_{k,n}^{(1)} -  \gamma_F,\; \amsmathbb{M}_{k,n}^{(2)} - 2\gamma^2_F\big) \xrightarrow{d} N\left(\lambda b, \Sigma\right),\] where
\[b = \left(\begin{array}{c} \frac{1}{1 - \rho} \\ \frac{2 \gamma_F(2 - \rho)}{(1 - \rho)^2}\end{array}\right), \quad \Sigma = \left(\begin{array}{cc} \frac{\gamma_G \gamma_F^2}{\gamma_G-\gamma_F} & \frac{2\gamma_G \gamma_F^3(2\gamma_G - \gamma_F)}{(\gamma_G-\gamma_F)^2}\\ \frac{2\gamma_G \gamma_F^3(2\gamma_G - \gamma_F)}{(\gamma_G-\gamma_F)^2} & \frac{4\gamma_G \gamma_F^4(5\gamma_G^2 - 4\gamma_G \gamma_F + \gamma_F^2)}{(\gamma_G - \gamma_F)^3}\end{array}\right).\]
\end{lemma}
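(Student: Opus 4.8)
The plan is to deduce Lemma \ref{L2dim} from the central limit theorem for extreme Kaplan--Meier integrals established earlier, specifically from Theorem \ref{T3an} together with Corollary \ref{R2}, applied simultaneously to the two functions $\varphi_1(x)=\log x$ and $\varphi_2(x)=(\log x)^2$. First I would verify that under the standing hypothesis $\alpha_F>1/2$ (equivalently $\gamma_G>\gamma_F$ when both are positive), Condition \ref{envelope_moment} holds for both $\varphi_1$ and $\varphi_2$: since each grows slower than any power, the exponent $\alpha(\varepsilon)=1/\gamma_G-1/\gamma_F-1+\varepsilon$ is strictly less than $-1$ for small $\varepsilon$, so $\int_1^\infty(\log x)^{2\ell}x^{\alpha(\varepsilon)}\dd x<\infty$ for $\ell=1,2$. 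Likewise Condition \ref{cond:phi_deriv} is immediate because $\varphi_\ell'$ is continuous and eventually monotone, and the second order Condition \ref{secondorder} on $U_F$ is assumed. Hence for each fixed $\ell$, Corollary \ref{R2} gives $\sqrt{k}(\amsmathbb{M}_{k,n}^{(\ell)}-S_\circ(\varphi_\ell))\stackrel{d}{\to}N(\lambda C(\gamma_F,\rho;\varphi_\ell),\sigma^2_{\varphi_\ell})$, where $S_\circ(\varphi_1)=\int_1^\infty\log x\,F^\circ(\dd x)=\gamma_F$ and $S_\circ(\varphi_2)=\int_1^\infty(\log x)^2F^\circ(\dd x)=2\gamma_F^2$.

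The genuinely two-dimensional content is the joint convergence with the stated covariance matrix $\Sigma$, which a componentwise application of Corollary \ref{R2} does not directly deliver. To get it I would revisit the decomposition of Theorem \ref{T4}: conditionally on $Z_{n-k,n}=t$, both $\amsmathbb{M}_{k,n}^{(1)}$ and $\amsmathbb{M}_{k,n}^{(2)}$ are, up to an $\overline{o}_P(k^{-1/2})$ remainder, averages $k^{-1}\sum_{i=1}^k W_i^{(\ell)}$ of i.i.d. random variables, where $W_i^{(\ell)}$ is the $W^t$-type variable from \eqref{wast} built with $\varphi_\ell$. Therefore the pair is, conditionally, an average of the i.i.d. two-dimensional vectors $(W_i^{(1)},W_i^{(2)})$, and the multivariate CLT applies with conditional covariance matrix converging (by the same dominated-convergence/Potter-bound argument used repeatedly in the proof of Theorem \ref{T4} and Theorem \ref{T1an}) to $\mathrm{cov}(W^\circ(\varphi_1),W^\circ(\varphi_2))$ as $t\to\infty$; the unconditioning is handled exactly as at the end of the proof of Theorem \ref{Tconsistency} since $Z_{n-k,n}\stackrel{P}{\to}\infty$. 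The diagonal entries of the limiting matrix are $\sigma^2_{\varphi_1}$ and $\sigma^2_{\varphi_2}$, and the off-diagonal entry is $\mathrm{cov}(W^\circ(\varphi_1),W^\circ(\varphi_2))$; the bias vector is $\lambda(C(\gamma_F,\rho;\varphi_1),C(\gamma_F,\rho;\varphi_2))$ from Theorem \ref{T3an} applied to the linear combinations (equivalently, from the Cramér--Wold device applied to the joint conditional CLT plus the joint bias computation).

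What remains is the explicit evaluation of the five quantities $C(\gamma_F,\rho;\varphi_1)$, $C(\gamma_F,\rho;\varphi_2)$, $\mathrm{var}(W^\circ(\varphi_1))$, $\mathrm{var}(W^\circ(\varphi_2))$, and $\mathrm{cov}(W^\circ(\varphi_1),W^\circ(\varphi_2))$, and checking they match the entries of $b$ and $\Sigma$. For the bias terms one uses $C(\gamma_F,\rho;\varphi)=\int_1^\infty x\varphi'(x)h_\rho(x^{1/\gamma_F})F^\circ(\dd x)$: with $\varphi_1'(x)=1/x$ and $F^\circ(\dd x)=\gamma_F^{-1}x^{-1/\gamma_F-1}\dd x$, the substitution $u=x^{1/\gamma_F}$ turns this into $\int_1^\infty h_\rho(u)u^{-2}\dd u=1/(1-\rho)$, and similarly $\varphi_2'(x)=2\log x/x$ gives $\int_1^\infty 2\gamma_F(\log u)h_\rho(u)u^{-2}\dd u=2\gamma_F(2-\rho)/(1-\rho)^2$ (these use $\int_1^\infty u^{-s}\dd u=1/(s-1)$ and $\int_1^\infty(\log u)u^{-s}\dd u=1/(s-1)^2$ for $s>1$, applied to $s=2$ and $s=2-\rho$). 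For the variance/covariance terms one writes out $W^\circ(\varphi_\ell)=\varphi_\ell(V^\circ)\gamma_0^\circ(V^\circ)\delta^\circ+\gamma_1^\circ(V^\circ)(1-\delta^\circ)-\gamma_2^\circ(V^\circ)$ using the closed forms $\gamma_0^\circ(x)=x^{1/\gamma_G}$ (since $G^\circ$ is continuous), $\gamma_1^\circ$ and $\gamma_2^\circ$ from \eqref{gamma1}--\eqref{gamma2} with $F^\circ,G^\circ$ Pareto, and then computes the needed first and second moments of $\log V^\circ$, $(\log V^\circ)^2$, etc., against the law of $V^\circ$ (Pareto with index $1/\gamma_H=1/\gamma_F+1/\gamma_G$) and the independent Bernoulli $\delta^\circ$ with $\pr(\delta^\circ=1)=\gamma_G/(\gamma_G+\gamma_F)=\alpha_F$. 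I expect this moment bookkeeping --- keeping track of the cross terms between the three summands of $W^\circ$ and between the two functions $\varphi_1,\varphi_2$ --- to be the main obstacle: it is a lengthy but routine computation of integrals of powers and logarithms against Pareto densities, and the only real danger is arithmetic error in assembling the entries of $\Sigma$. A useful sanity check is that setting $\gamma_G=\infty$ (the uncensored limit) should reduce $\Sigma$ to the classical Hill/second-moment covariance matrix, namely $\mathrm{diag}$-and-off-diagonal $\big(\gamma_F^2,\ 4\gamma_F^3;\ 4\gamma_F^3,\ 20\gamma_F^4\big)$, which indeed is the $\gamma_G\to\infty$ limit of the displayed $\Sigma$.
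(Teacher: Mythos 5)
Your proposal is correct and follows essentially the same route as the paper's own proof: you use the decomposition of Theorem \ref{T4} to reduce $(\amsmathbb{M}_{k,n}^{(1)},\amsmathbb{M}_{k,n}^{(2)})$ conditionally to an average of i.i.d.\ two-dimensional vectors, obtain the joint CLT via Cram\'er--Wold (which is exactly what the paper does by writing $\langle\vec s,\vec\varphi\rangle$ in the characteristic function of Theorem \ref{T1an}), uncondition as in Theorem \ref{Tconsistency}, and read off the bias from Theorem \ref{T3an}. Your explicit bias computations (giving $1/(1-\rho)$ and $2\gamma_F(2-\rho)/(1-\rho)^2$) and the $\gamma_G\to\infty$ sanity check on $\Sigma$ are correct and consistent with what the paper leaves as a ``routine calculation.''
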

\begin{proof} To prove the lemma, we require a two-dimensional analogue of Theorem~\ref{T1an}, since then all other results of Section~\ref{sec:ekmi} concern convergence in probability and hence they are extended to the two-dimensional case straightforwardly. Thus, assume two functions $\varphi^{(1)}$ and $\varphi^{(2)}$ satisfying the assumptions of Theorem~\ref{T1an}. Conducting the proof of Theorem~\ref{theo:Rkn_term} and Theorem~\ref{T4} for the pair $\big(S_{k,n}(\varphi^{(1)}), S_{k,n}(\varphi^{(2)})\big)$ together, we get similarly to \eqref{reformulation} the following decomposition:
\[\big(S_{k,n}( \varphi^{(1)}),\, S_{k,n}(\varphi^{(2)})\big)  =  \left(\frac{1}{k} \sum_{j=1}^k W_j^{(1)} + r^{(1)}_{k,n},\; \frac{1}{k} \sum_{j=1}^k W_j^{(2)} + r^{(2)}_{k,n}\right),\] where for every $j\in \{1, \ldots, k\}$ $W_j^{(1)}$ and $W_j^{(2)}$ given $Z_{n-k,n} = t$ are the functions of $V^t_j$ and $\delta_j^t$ defined similarly to \eqref{wast} by replacing $V^t, \delta^t$ with $V^t_j, \delta^t_j$ and $\varphi$ with $\varphi^{(1)}$ and $\varphi^{(2)},$ respectively. So $\big\{(W_j^{(1)}, W_j^{(2)})\big\}_{j=1}^k$ are i.i.d. given $Z_{n-k,n} = t,$ and both $\sqrt{k} r^{(1)}_{k,n},$ $\sqrt{k} r^{(2)}_{k,n}$ %are $\overline{o}_{P}(1)$ and thus
do not affect the asymptotic by the first argument in the proof of Theorem~\ref{Tconsistency}. Following the proof of Theorem~\ref{T1an} in two dimensions (in fact, the only required change is to write $\langle\Vec{s}, \Vec{\varphi}\rangle = s^{(1)}\varphi^{(1)} + s^{(2)}\varphi^{(2)}$ instead of $s \varphi$), we get that the vector
\[\sqrt{k}\Big(S_{k,n}( \varphi^{(1)}) - S_{Z_{n-k,n}}( \varphi^{(1)}),\, S_{k,n}(\varphi^{(2)}) - S_{Z_{n-k,n}}( \varphi^{(2)})\Big)\] converges in distribution to the zero-mean Gaussian random vector with covariance matrix $\Sigma$ given as the covariance matrix of the vector $\big(W^\circ(\varphi^{(1)}), W^\circ(\varphi^{(2)})\big)$ built according to formula  \eqref{wcirc} and using the same $V^\circ$ and $\delta^\circ.$

The result of the lemma now follows by a routine calculation of the asymptotic bias (following from Theorem~\ref{T3an}) and asymptotic variance for $\varphi^{(1)}(x) = \log x$ and $\varphi^{(2)}(x) = \log^2 x.$
\end{proof}

By an application of the above lemma, we obtain (i), appealing to Cram\'er's delta method.\\

Note that it is possible to prove assertion (i) %of the latter theorem 
by imposing the second-order condition \eqref{standardsecondorder} instead of Condition~\ref{secondorder}, as it was done, e.g., in Theorem 3.5.4, \cite{dehaan} in the uncensored case. We avoid such a route to highlight that for $\gamma_F>0$ the proof follows quite simply  from Theorem~\ref{T3an}.\\

The result of (ii) follows immediately from Theorem~\ref{T2genekmi}, the analogue of Lemma~\ref{L2dim} for $\gamma_F=0$ (the proof technique of the two results is virtually identical), and Cram\'er's delta method. The only detail worth clarifying is the calculation of the bias in Theorem~\ref{T2genekmi}. The function $\tilde g$ appearing in the bias term in this theorem is equal to $H_{\gamma_-, \rho^\prime}(\cdot)$ by \eqref{secondorderlog}, where $\gamma_- = \rho^\prime = 0$ by Lemma B.3.16 in~\cite{dehaan}. Thus, $\tilde g(x) = \frac{1}{2} \log^2 x$ by (B.3.8) ibid. The remaining part of the proof is straightforward. \\ 

Now, let us prove (iii). To obtain minimal conditions, we prove \eqref{T2genekmiassertion} once again for particular $f$ and $g.$

Since under the assumptions of Theorem~\ref{Tmomentconsistency}, Theorem~\ref{T1genekmi} holds for $f$ defined by  \eqref{candidatef} and $g = h_{\gamma_H}$, it is enough to show \eqref{mufxi} in our particular case; see the corresponding steps in the proofs of Theorem~\ref{T1genekmi} and Theorem~\ref{T2genekmi}. Denoting for the moment $s = \xi_{n-k,n},$ we have,
\begin{eqnarray}\nonumber\mu_f(s) - \mu_f(\infty) & = & \int_1^\infty \vartheta(f(y, s)) \tilde F^s(\dd y) - \int_1^\infty \vartheta(g(y)) \tilde F^\circ(\dd y) \\\label{I1}
&=&\int_1^\infty \vartheta\left(\frac{\log U_H(sy) - \log U_H(s)}{a_H(s)/U_H(s)}\right)\dd\left(\frac{F(U_H(sy))}{1 - F(U_H(s))}\right)\\ &&\label{I2}- \int_1^\infty \vartheta\left(\frac{y^{\gamma_H}-1}{\gamma_H}\right)\dd(1 - y^{-\alpha_F}),\end{eqnarray}
where $\alpha_F = \gamma_H/\gamma_F$ by Proposition~\ref{P2}. Denote for brevity $q := a_H/U_H$ and recall that $\tau := \tau_F = \tau_H$ by Proposition~\ref{P2} (ii). By Theorem B.2.18 in~\cite{dehaan}, $q$ can be chosen equal to $-\gamma_H(\tau_F - U_H).$ Let us change the argument of the integral \eqref{I1} to
\[x = \frac{\tau - U_H(sy)}{\tau - U_H(s)},\] and the argument of the integral \eqref{I2} to $x = y^{\gamma_H}.$ Notice that these changes are natural since the latter ratio tends to $y^{\gamma_H}$ by the properties of distributions belonging to the Weibull max-domain of attraction. Then, we can rewrite
\begin{eqnarray*}\mu_f(s) - \mu_f(\infty) &=&  - \int_0^1 \vartheta\left(\frac{\log(\tau - xt) - \log(\tau - t)}{-\gamma_H(\log \tau - \log (\tau - t))}\right) \dd \left(\frac{F(\tau - xt)}{1 - F(\tau - t)}\right) \\ && + \int_0^1 \vartheta\Big(\frac{x-1}{\gamma_H}\Big) \dd(1 - x^{-1/\gamma_F}), \end{eqnarray*}
where $t := \tau - U_H(s)$ tends to $0$ as $s\to\infty.$ Denote $F^\circ(1-x) = 1- x^{-1/\gamma_F},$

\begin{align*}
&F^{\ast t}(1 - x) = \frac{F(\tau - xt) - F(\tau- t)}{1 - F(\tau- t)}\\
&\chi(x,t) = \frac{1}{\gamma_H}\left(\frac{\log(1 - x t/\tau)}{\log(1 - t/\tau)} - 1\right), \quad \chi(x,0) = \frac{x-1}{\gamma_H}.
\end{align*}

Clearly, $\chi(x,0) = \lim_{t\downarrow 0} \chi(x,t).$ Using this notation, we get
\begin{eqnarray*}-(\mu_f(s) - \mu_f(\infty)) &=& \int_0^1\vartheta(\chi(x,t))\dd (F^{\ast t}(1-x)) - \int_0^1\vartheta(\chi(x,0))\dd (F^{\circ}(1- x))\\
&=& \left(\int_0^1\vartheta(\chi(x,t))\dd (F^{\ast t}(1-x)) - \int_0^1\vartheta(\chi(x,0))\dd (F^{\ast t}(1-x))\right)\\ &&+ \left(\int_0^1\vartheta(\chi(x,0))\dd [F^{\ast t}(1-x) - F^{\circ}(1-x)]\right) =: I_1 + I_2.\end{eqnarray*}
First, consider the asymptotic behavior of $I_2.$ Similar to \eqref{conjugate}, notice that
\[U_{F^{\ast t}}(x) = 1 - \frac{\tau - U_F(xv)}{\tau - U_F(v)},\] where $t = \tau - U_F(v),$ is the tail quantile function of $F^{\ast t}$ (here we think about $t$ and $v$ as constants) and that it converges to $1 - x^{\gamma_F}$ as $v\to\infty$ by the regular variation property of $U_F.$ Hence $U_{F^{\ast t}}(\xi)$ has a cdf $F^{\ast t},$ where $\xi$ is a a Pareto(1) random variable, and \[v = U_F^{\leftarrow}(\tau - t) = U_F^{\leftarrow}(U_H(\xi_{n-k,n})) = U_F^{\leftarrow}(Z_{n-k,n}) \] as in the proof of Theorem~\ref{T3an}. The remaining part of finding the asymptotic behavior of $I_2$ is a repetition of the steps of that proof (we let the analogue of Condition~\ref{cond:phi_deriv} hold) for $\varphi(x) = \vartheta(\chi(x,0)),$ $u(v,\xi) = 1 - U_{F^{\ast t}}(\xi),$ and replacement of Condition~\ref{secondorder} by the second-order condition \eqref{secondordernegative}. Therefore, recalling that $t = \tau - Z_{n-k,n},$ we derive
\[\sqrt{k} I_2 \xrightarrow{P} \lambda C_2(\gamma_F, \rho),\]
where \[C_2(\gamma_F, \rho) = \amsmathbb{E}\big[\varphi^\prime(\xi^{\gamma_F})\xi^{\gamma_F} h_\rho(\xi)\big] = \frac{1}{\gamma_H}\int_1^\infty  \vartheta^\prime(\chi(x^{\gamma_F},0)) x^{\gamma_F-2} h_\rho(x) \dd x.\] Now, let us prove that 
\begin{equation}\sqrt{k} I_1 \xrightarrow{P} \hat \lambda C_1(\gamma_F)\label{sqrtki1}\end{equation} with \[C_1(\gamma_F) = \int_0^1 \vartheta^\prime\Big(\frac{x-1}{\gamma_H}\Big) \frac{x(x-1)}{2\gamma_H} \dd (F^\circ(1- x)).\]Denote $\vartheta_H(x) = \vartheta((x-1)/\gamma_H)$ for brevity. Keeping the notation $t = \tau - Z_{n-k,n},$ we have
\begin{eqnarray*} I_1 &=& \int_0^1 \left[\vartheta_H\left( \frac{\log(1 - x t/\tau)}{\log(1 - t/\tau)}\right) - \vartheta_H(x)\right] \dd(F^{\ast t}(1- x)).\end{eqnarray*} Let us analyze the difference within the squared brackets in the latter integral. By Taylor expansion and the mean value theorem, we get uniformly in $x$
\begin{eqnarray}\nonumber\vartheta_H\left( \frac{\log(1 - x t/\tau)}{\log(1 - t/\tau)}\right) - \vartheta_H(x) &=& \vartheta_H^\prime(x^\ast) \left(\frac{\log(1 - x t/\tau)}{\log(1 - t/\tau)} - x\right)\\
&=& \vartheta_H^\prime(x^\ast) \frac{x(x-1)}{2\tau} t(1 + o_P(1)), \label{varthetaprimeh}\end{eqnarray} where \[x^\ast \in \left[\frac{\log(1 - x t/\tau)}{\log(1 - t/\tau)}, x\right],\] since the latter ratio of logarithms is less than $x$ for $x\in[0,1]$ and $t\in(0, \tau),$ which holds with probability tending to $1$ as $n\to\infty.$ Note also that for $x\in[0,1]$
\[0 < \frac{1}{1 - xt/\tau} \le C(1+ o_P(1)), \quad n\to\infty,\] for some positive $C.$ On the other hand, using the simple bounds $\frac{y}{1+y} < \log (1+y) < y$ for $y>-1$ and assuming $|\vartheta_H^\prime|$ increases (as are $\vartheta_1(x) = x$ and $\vartheta_2(x) = x^2$), we get
\begin{eqnarray}\nonumber\sqrt{k} |I_1| &\le& \sqrt{k} \int_0^1 |\vartheta_H^\prime(x)| \left|\frac{x}{1 - xt/\tau} - x\right| \dd(F^{\ast t}(1- x))\\ &\le& C \sqrt{k}(1 - Z_{n-k,n}/\tau) \int_0^1 |\vartheta_H^\prime(x)|\, x^2\, x^{-1/\gamma_F-1+\varepsilon} \dd x (1 + o_P(1)),\label{sqrtk|i1|}\end{eqnarray} where the latter inequality holds by the argument used for proving (\ref{P0}). If 
\begin{equation}\label{conditiontrivial} \int_0^1 |\vartheta_H^\prime(x)|\, x^{1 - 1/\gamma_F+\varepsilon} \dd x <\infty,\end{equation} which holds for $\vartheta_1(x) = x$ and $\vartheta_2(x) = x^2,$ then the expression in the right-hand side of \eqref{sqrtk|i1|} is finite by the relation $\sqrt{k}(\tau - U_H(n/k)) \to \hat\lambda \tau$ and the argument similar to \eqref{aratio},
and, moreover, vanishes if $\hat \lambda = 0,$ which completes the proof of 
\eqref{sqrtki1} for this case.

Next, denote \[f_1(x,t) = \left( \frac{\log(1 - x t/\tau)}{\log(1 - t/\tau)} - x\right) \vartheta_H^\prime\left( \frac{\log(1 - x t/\tau)}{\log(1 - t/\tau)}\right),\] \[f_2(x,t) = \left( \frac{\log(1 - x t/\tau)}{\log(1 - t/\tau)}-x\right) \vartheta_H^\prime(x),\] and assume $\vartheta_H^\prime$ is %a.e.
continuous (as is for $\vartheta_1(x) = x$ and $\vartheta_2(x) = x^2$). Notice that
\[\frac{1}{t}|f_1(x,t)|\le \frac{1}{t}|f_2(x,t)| \le Cx^2\vartheta_H^\prime(x)\] for small $t$; in the sequel we use these relations when applying the dominated convergence theorem. Given $\tau - Z_{n-k,n} = t$ (here we think about $t$ as a real number), $\sqrt{k} I_1$ lies between $\sqrt{k}\amsmathbb{E} \big[f_1(1-U_{F^{\ast t}}(\xi), t)\big]$ and $\sqrt{k}\amsmathbb{E} \big[f_2(1-U_{F^{\ast t}}(\xi), t)\big],$ which both converge to $\hat \lambda C_1(\gamma_F)$ as $t\downarrow 0.$ Indeed, by the dominated convergence theorem, (\ref{varthetaprimeh}-\ref{sqrtk|i1|}), convergence $U_{F^{\ast t}}(x) \to U_{F^\circ}(x)$ and %a.e.
continuity of $f_1(x,t)$ in the first argument 
\begin{equation*}\frac{\tau}{t} \amsmathbb{E} \big[f_1(1-U_{F^{\ast t}}(\xi), t)\big] \to \int_0^1 \vartheta^\prime_H(x) \frac{x(x-1)}{2} \dd(F^\circ(1-x)) = C_1(\gamma_F), \quad t\downarrow 0,\label{frac1t}\end{equation*} and the same holds for $f_2.$ 
The above together with $\sqrt{k}(1 - U_H(n/k)/\tau) \to \hat\lambda$ and an argument similar to \eqref{aratio} gives us \eqref{sqrtki1}. \\

Thus, for $\vartheta_\ell(x) = x^\ell,$ $\ell=1,2,$ we get the following result
\begin{equation}\label{negativeg}\sqrt{k}\left(\frac{\amsmathbb{M}_{k,n}^{(\ell)}}{\big(-\gamma_H(\tau - Z_{n-k,n}))^\ell} - \frac{\gamma_F^\ell}{\gamma_H^\ell}\frac{\ell}{(1 - (\ell-1)\gamma_F)(1-\ell \gamma_F)}\right) \xrightarrow{d} N\left(b_\ell, \sigma^2_\ell\right),\end{equation} where

\[b_{\ell} = \lambda \frac{\ell \gamma_F^{\ell-1} (1 + (\ell-1)(1 - \rho - 3\gamma_F))}{\gamma_H^\ell \prod_{i=1}^{\ell}(1 - i\gamma_F)(1 - \rho - i \gamma_F) } - \hat\lambda \frac{\ell^2 \gamma^\ell_F}{2\gamma_H^\ell \prod_{i=1}^{\ell+1} (1-i\gamma_F) },\]
\[\sigma^2_1 = \frac{(\gamma_F + \gamma_G)^2}{\gamma_G(1 - \gamma_F)^2(\gamma_G(1 - 2\gamma_F)-\gamma_F)},\] \[ \sigma_2^2 = \frac{4(\gamma_F+\gamma_G)^4(22\gamma_F^2\gamma_G^2-21\gamma_F\gamma_G^2 + 5\gamma^2_G+9\gamma_F^2\gamma_G-4\gamma_F\gamma_G+\gamma^2_F)}{\gamma_G^3(1-\gamma_F)^2(1-2\gamma_F)^2 \prod_{j=2}^4(\gamma_G-\gamma_F-j\gamma_F\gamma_G)}.\]
Next, repeating the proof of Lemma~\ref{L2dim} with necessary changes, we get the two-dimensional version of \eqref{negativeg} with the limit distribution $N(\vec{b}, \Sigma),$ where $\vec{b} = (b_1, b_2)^T,$ $\Sigma = \|\Sigma_{ij}\|_{i,j=1}^2,$ $\Sigma_{11} = \sigma_1^2,$ $\Sigma_{22} = \sigma_2^2,$ and
\[\Sigma_{12} = \frac{2(\gamma_F+\gamma_G)^3(2\gamma_G - \gamma_F - 4\gamma_F\gamma_G)}{\gamma_G^2 (1-\gamma_F)^2(1-2\gamma_F) \prod_{j=2}^3(\gamma_G-\gamma_F-j\gamma_F\gamma_G)}.\]
An appeal to Cram\'er's delta method now yields (iii) with
\begin{equation}\sigma^2 = a_1^2 \sigma^2_1 + 2a_1a_2\Sigma_{12} + a^2_2\sigma^2_2.\label{sigma}\end{equation} and
\[a_1 = 1 - \frac{2\gamma_G(1 - \gamma_F)^2(1 - 2\gamma_F)}{\gamma_F+\gamma_G}, \quad a_2 = \frac{\gamma_G^2(1 - \gamma_F)^2(1 - 2\gamma_F)^2}{2(\gamma_F+\gamma_G)^2}.\]
\end{proof}

\end{document}